\newcounter{listacnt}\renewcommand{\thelistacnt}{\alph{listacnt}}
\newcommand{\bydef}{\,\stackrel{\mbox{\tiny\textnormal{\raisebox{0ex}[0ex][0ex]{def}}}}{=}\,}
\newcommand{\overa}{\bar{a}}
\newcommand{\bx}{\bar{x}}
\newcommand{\ellnu}{\ell_{\nu}^1}
\newcommand{\overL}{\bar{L}}
\newcommand{\overx}{\bar{x}}
\newcommand{\overp}{\bar{\psi}}
\newcommand{\R}{\mathbb{R}}
\newcommand{\hv}{\widehat{v}}
\newcommand{\Q}{\mathcal{Q}}
\newtheorem{theorem}{Theorem}%[section]
\newtheorem{definition}[theorem]{Definition}
\newtheorem{lemma}[theorem]{Lemma}
\newtheorem{proposition}[theorem]{Proposition}
\newtheorem{remark}[theorem]{Remark}
\numberwithin{equation}{section}
\begin{document}
\title{Continuation of homoclinic orbits in the suspension bridge equation: a computer-assisted proof}%
\author{Jan Bouwe van den Berg \footnote{VU University Amsterdam, Department of Mathematics, De Boelelaan 1081, 1081 HV Amsterdam, The Netherlands. janbouwe@few.vu.nl}
\and 
Maxime Breden \footnote{CMLA, ENS Cachan, CNRS, Universit\'e Paris-Saclay, 94235 Cachan, France and Universit\'{e} Laval, D\'{e}partement de Math\'{e}matiques et de Statistique, 1045 avenue de la M\'{e}decine,
Qu\'{e}bec, QC, G1V0A6, Canada.  maxime.breden@ens-cachan.fr}
\and Jean-Philippe Lessard \footnote{Universit\'{e} Laval, D\'{e}partement de Math\'{e}matiques et de Statistique, 1045 avenue de la M\'{e}decine,
Qu\'{e}bec, QC, G1V0A6, Canada. jean-philippe.lessard@mat.ulaval.ca.}
\and Maxime Murray \footnote{Florida Atlantic University, Department of Mathematical Sciences,
         Science Building, Room 234, 777 Glades Road,
         Boca Raton, Florida, 33431 , USA. mmurray2016@fau.edu}
         }

\date{}

\maketitle

\begin{abstract}
In this paper, we prove existence of symmetric homoclinic orbits for the suspension bridge equation $u''''+\beta u'' + e^u-1=0$ for all parameter values $\beta \in [0.5,1.9]$. 
For each $\beta$, a parameterization of the stable manifold is computed and the symmetric homoclinic orbits are obtained by solving a projected boundary value problem using Chebyshev series. The proof is computer-assisted and combines the uniform contraction theorem and the radii polynomial approach, which provides an efficient means of determining a set, centered at a numerical approximation of a solution, on which a Newton-like operator is a contraction.
\end{abstract}

\begin{center}
{\bf \small Key words.} 
{ \small Suspension bridge equation, traveling waves, contraction mapping, rigorous numerics, symmetric homoclinic orbits, stable manifolds}
\end{center}

%%%%%%%%%%%%%%%%%%%%%%%%%%%%%%%%%%%%%%%%%%%%%%%%%%
%%%%%%%%%%%%%%%%%%%%%% INTRODUCTION %%%%%%%%%%%%%%%%%%%
%%%%%%%%%%%%%%%%%%%%%%%%%%%%%%%%%%%%%%%%%%%%%%%%%%

\section{Introduction} 
\label{sec:introduction}

One of the simplest models~\cite{McKennaWalter,MR3119065} for a suspension bridge is the partial differential equation (PDE)
\begin{equation} \label{eq:bridge}
\frac{\partial^2 U}{\partial T^2} = -\frac{\partial^4 U}{\partial X^4} - e^U +1.
\end{equation}
Here $U(T,X)$ describes the deflection of the roadway from the rest state $U=0$ as a function of time~$T$ and the spatial variable~$X$ (in the direction of traffic).
This paper is concerned with traveling wave solutions of~\eqref{eq:bridge}, i.e.,  solutions $U(T,X)=u(X-cT)$ describing a disturbance with profile $u$ propagating at velocity $c$ along the surface of the bridge. In particular, we apply a computer-assisted proof method to show that there is a large range of velocities for which such a solitary wave exists. 

Looking for traveling waves of \eqref{eq:bridge} with wave speed $c$ leads to the ordinary differential equation
\begin{equation}\label{eq:ode}
u''''+ c^2 u'' + e^u-1=0 .
\end{equation}
For large positive and negative values of the independent variable $t=X-cT$ we assume the solution to converge to the equilibrium $u=0$.
Due to the reversibility symmetry of the PDE in both time and space, we may restrict our attention to symmetric solutions. Hence, setting $\beta=c^2$, we are looking for symmetric homoclinic orbits satisfying
\begin{equation} \label{eq:brigdeODE}
\left\{ \begin{array}{l}
u''''+ \beta u''  + e^u-1=0 \\
u(-t)=u(t) \\
\lim_{t \to \infty} u(t) = 0.
\end{array}\right.
\end{equation}

Fourth order differential equations of the form $u''''+\beta u'' +f(u)=0$ for various nonlinearities $f$ have been studied extensively. 
For the bistable nonlinearity $f(u)=u^3-u$ the equation is a standard model in pattern formation, called the Swift-Hohenberg equation (see~\cite{MR1839555} and references therein), whereas the quadratic nonlinearity $f(u)=u^2-u$ appears, for example, in the study of water waves~\cite{MR1388874}. For the piecewise linear case $f(u)= \max\{ u,0\} $ 
homoclinic solutions were obtained in~\cite{McKennaWalter, ChenMcKenna}. For the problem with the exponential nonlinearity $f(u)=e^u-1$ a family of periodic solutions was established in~\cite{MR1632819}.

In \cite{ChenMcKenna} the question about existence of a symmetric homoclinic orbit of \eqref{eq:brigdeODE} is raised. This question was addressed by variational methods in \cite{MR1929147}, where the authors proved the result for \emph{almost all}  parameter values $\beta \in (0,2)$. In \cite{MR2578798} the existence of homoclinic orbits was demonstrated for all $\beta \in (0,c_*^2) \approx (0,0.5516)$, again using variational methods as well as intricate estimates on the second variation. In a different direction, using a computer-assisted proof, it was proven in \cite{MR2220064} that \eqref{eq:brigdeODE} has at least 36 homoclinic solutions for the single parameter value $\beta=1.69$.

In the present paper we complement the above results by proving the following.
\begin{theorem} \label{thm:main_result}
For all parameter values $\beta\in[0.5,1.9]$ there exists a symmetric homoclinic orbit of \eqref{eq:brigdeODE}.
\end{theorem}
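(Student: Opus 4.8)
The plan is to recast problem \eqref{eq:brigdeODE} as a first-order system with polynomial nonlinearity, to enclose rigorously a piece of the stable manifold of the origin with the parameterization method, and then to solve a projected boundary value problem whose solutions are exactly the symmetric homoclinic half-orbits, the whole construction being set up as a Newton--Kantorovich fixed-point argument controlled by radii polynomials and run uniformly in $\beta$. First I would introduce $v \bydef e^u - 1$, which satisfies $v' = (1+v)u'$, turning \eqref{eq:brigdeODE} into an autonomous first-order system in $(u,u',u'',u''',v)$ with quadratic right-hand side and equilibrium at the origin (the relation $v = e^u-1$ being preserved by the flow once it holds at a single point). The linearization at $0$ has characteristic polynomial $\lambda^4 + \beta\lambda^2 + 1$, whose roots for $\beta \in [0.5,1.9]$ are two stable and two unstable eigenvalues forming complex conjugate pairs with nonzero imaginary part, so the origin is a saddle-focus with a two-dimensional stable manifold $W^s$. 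By the reversibility $t \mapsto -t$ (which acts on phase space by flipping the sign of the odd-order derivatives), a solution of \eqref{eq:brigdeODE} is precisely a forward solution curve that lies on $W^s$ and meets the symmetric section $\{u' = u''' = 0\}$ at $t = 0$; it then suffices to produce such a half-orbit.

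Next I would parameterize $W^s$ locally by an analytic map $P(\theta)$, with $\theta = (\theta_1,\theta_2)$ the conjugate coordinates attached to the stable eigenpair, chosen so that $P$ conjugates the flow to its linear part. Expanding $P$ as a power series, its coefficients are determined recursively by the homological equations --- tractable thanks to the quadratic nonlinearity --- and the tail of the series is enclosed by writing the coefficient problem as a fixed-point equation in a weighted space $\ellnu$ and applying the radii polynomial approach. This yields an explicit truncated polynomial approximation of $P$ together with a rigorous analytic error bound, valid on a polydisc in $\theta$ whose radius can be tuned so that the manifold piece extends far enough for the orbit below to reach it.

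Then I would rescale time $t \in [0,L]$ to $s \in [-1,1]$, expand the half-orbit in Chebyshev series, and assemble a zero-finding map $F(x) = 0$ in which the unknown $x$ collects the stable-manifold coefficients, the Chebyshev coefficients of the orbit, the (unknown) integration time $L$, and the boundary phase $\theta^\ast \in W^s$; the equations of $F$ comprise the homological equations for $P$, the differential equation rewritten as a recurrence on the Chebyshev coefficients, the symmetry conditions $u'(0) = u'''(0) = 0$, and the matching condition $\big(u,u',u'',u''',v\big)(L) = P(\theta^\ast)$. With a numerical solution $\overx$ and an injective approximate inverse $A$ of $DF(\overx)$, I set $T(x) = x - AF(x)$, derive the usual bounds $Y_0 \geq \|T(\overx) - \overx\|$ and $Z_0, Z_1, Z_2$ controlling the Lipschitz constant of $DT$ on a ball $B_r(\overx)$, and conclude from negativity of the radii polynomial $p(r) = Z_2 r^2 + (Z_0 + Z_1 - 1)r + Y_0$ that $T$ is a contraction on $B_r(\overx)$; its unique fixed point is a genuine symmetric homoclinic orbit within distance $r$ of the numerical data.

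Finally, to obtain the full parameter interval rather than a finite list of values, I would treat $\beta$ as a parameter and appeal to the uniform contraction theorem: partition $[0.5, 1.9]$ into finitely many subintervals and, on each, recompute $Y_0, Z_0, Z_1, Z_2$ with $\beta$ ranging (through interval arithmetic) over that subinterval, so that $p(r) < 0$ holds for some fixed $r$ and all admissible $\beta$ at once; the fixed point then depends continuously on $\beta$ on each piece, giving a continuous branch of symmetric homoclinic orbits over all of $[0.5, 1.9]$. I expect the main obstacle to be the tightness and uniformity of these bounds across the whole range simultaneously: the stable-manifold polydisc has to be large enough for the finite-time orbit to land on it while keeping the manifold error small, the saddle-focus forces the homoclinic to spiral so that a sizeable number of Chebyshev modes and a careful choice of $L$ and of the decay weight $\nu$ are needed, and since everything must hold for $\beta$ in a subinterval one likely needs many narrow subintervals where the estimates are worst --- so controlling the convolution terms of the quadratic nonlinearity in both the $\ellnu$ and Chebyshev norms, uniformly in $\beta$, is where most of the analytic effort goes.
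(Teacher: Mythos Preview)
Your proposal is correct and follows the same overall architecture as the paper: polynomial recasting, parameterization-method validation of the stable manifold in a weighted $\ell^1$ space, a Chebyshev projected BVP for the half-orbit, radii polynomials, and the uniform contraction principle for continuation in $\beta$.

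A few implementation differences are worth noting. First, the paper \emph{replaces} $u$ by $v_1=e^u-1$ rather than augmenting, obtaining a four-dimensional quadratic system instead of your five-dimensional one; your extra coordinate is harmless but redundant since $u$ no longer appears once $v$ is introduced. Second, the paper validates the manifold and the orbit \emph{separately} rather than packing them into a single zero-finding map; this modularity keeps the linear algebra smaller and lets the manifold estimates (and the Cauchy-type bound on $\partial_\theta P$) be computed once and fed into the BVP as data with explicit error. Third, the paper explicitly resolves the one-parameter redundancy in the pair $(L,\theta^\ast)$ by fixing $|\theta^\ast|=\rho$ and solving only for the angle $\psi$; your phrase ``boundary phase $\theta^\ast$'' hides this issue, and without such a normalization your map $F$ has a one-dimensional kernel and the Newton--Kantorovich argument cannot close. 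Finally, for the continuation the paper interpolates linearly between numerical solutions at the two endpoints $\beta_0,\beta_1$ of each subinterval and expands all bounds as polynomials in the interpolation parameter $s$, rather than centering at a single approximation and letting $\beta$ be an interval; this buys substantially larger steps in $\beta$ and is what keeps the total number of subintervals (about $8000$) tractable.
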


We remark that for $|\beta| < 2$ the origin is a saddle-focus, while for $\beta > 2$ it is a saddle-center. Furthermore, we note the integral identity $\int_{\mathbb{R}} |u''|^2 - \beta |u'|^2 =  - \int_{\mathbb{R}} (e^u-1)u $. Since the right hand side is non-positive, homoclinic orbits are excluded for $\beta \leq 0$.
It is thus expected that the parameter range for which homoclinics exist is $\beta \in (0,2)$, or, equivalently, wave speeds $c\in(0,\sqrt{2})$. 
Our method for proving the result in Theorem~\ref{thm:main_result} is computer-assisted. While it can certainly be extended somewhat beyond the interval $[0.5,1.9]$, it is not possible to cover the entire range $(0,2)$ in this way. Indeed, as  $\beta$ decreases towards $0$ the amplitude of the solution diverges ($u$ becomes very negative), whereas when $\beta$ tends to $2$ the  homoclinic orbit collapses onto the trivial solution. In both limit regimes computer-assisted proofs become harder and harder. Since the result in~\cite{MR2578798} already covers the range $\beta \in (0,0.55]$, we thus focus on the parameter range $[0.5,1.9]$. We note that at $\beta=2$ a Hamiltonian-Hopf bifurcation occurs. In future work we intend to unfold this bifurcation and subsequently connect the homoclinic orbit that bifurcates to the branch covered by Theorem~\ref{thm:main_result} (at that point we will know how far we have to push the current continuation technique beyond $\beta=1.9$ to connect all the way to the bifurcation point).

The rest of the paper is dedicated to the proof of Theorem~\ref{thm:main_result}. 
Our approach begins by rewriting \eqref{eq:brigdeODE} as a first order system
for $(u_1,u_2,u_3,u_4)= (u,u',u'',u''')$ and then making the change of variables $(v_1,v_2,v_3,v_4)=(e^{u_1}-1,u_2,u_3,u_4)$ to obtain
\begin{equation}
\label{quadratic_system}
\left\{
  \begin{aligned}
  & v_1'=v_2+v_1v_2\\
  & v_2'=v_3\\
  & v_3'=v_4\\
  & v_4'=-\beta v_3-v_1.
  \end{aligned}
\right.
\end{equation}
There are two reasons for performing this change of variables. First, it turns the system into a polynomial vector field, which has technical advantages when performing the analysis to derive the necessary bounds. Second, while $u_1$ may become very negative for small values of $\beta$, the variable $v_1$ is always bounded from below by $-1$.
Our goal is now to prove the existence of symmetric homoclinic solutions to \eqref{quadratic_system} for all $\beta\in[0.5,1.9]$. 

We split the problem into two parts. On the one hand a rigorous computational description of the local (un)stable manifold is required. On the other hand we need to solve, via a rigorous computational technique, a boundary value problem for the part of the orbit between the local invariant manifolds. We attack both parts by a continuation technique in the context of the radii polynomial approach. This parametrized Newton-Kantorovich method, adapted to a computational setting, is introduced in Section~\ref{sec:radii_poly_approach}. 
In Section~\ref{Manifold} we combine this with the parameterization method to obtain descriptions of the local (un)stable manifold of the equilibrium $0 \in \R^4$. Essentially the same technique is then applied in Section~\ref{orbit} in a Chebyshev series setting to solve the boundary value problem. These two aspects are then combined into a rigorous computational continuation of the homoclinic solution to~\eqref{eq:brigdeODE}. 
We note that for smaller values of $\beta$ the boundary value problem is the more difficult part of the problem, as the orbit makes a bigger and bigger excursion away from the origin. On the other hand, for values of $\beta$ close to $2$ it is more difficult to obtain the local (un)stable manifold of the origin, as the real part of the eigenvalues tends to $0$. 
The algorithmic issues encountered when implementing the proof of Theorem~\ref{thm:main_result} are discussed in Section~\ref{s:algorithm}.

Finally, let us mention that there is a growing literature on the subject of computer-assisted methods for proving existence of connecting orbits, see \cite{SzczelinaZgli,BDLM,BLMM,Wilczak,Wilczak2,WilczakZgli,WojcikZgli}.
The main novel contribution of the current paper is to do rigorous continuation of a homoclinic orbit over a large range of parameter values. The method is generally applicable for connecting orbits problems in parameter dependent problems. In that sense Theorem~\ref{thm:main_result}, while providing a new  result for traveling waves in the suspension bridge problem which complements earlier work, is an illustration.

%%%%%%%%%%%%%%%%%%%%%%%%%%%%%%%%%%%%%%%%%%%%%%%%%%
%%%%%%%%%%%%%%%%%%%% RADII POLYNOMIALS %%%%%%%%%%%%%%%%%%
%%%%%%%%%%%%%%%%%%%%%%%%%%%%%%%%%%%%%%%%%%%%%%%%%%

\section{The radii polynomial approach} 
\label{sec:radii_poly_approach}

In this section we present the functional analytic setup of our continuation method, which is formulated in terms of the {\em radii polynomials}, see Definition~\ref{PolRayon}. It will be used both to find the stable manifold and to solve the boundary value problem. For more details and proofs we refer to~\cite{MR3125637,MR2338393,MR2630003}.

Consider a sequence of Banach spaces $\left( X_1, \|\cdot \|_{X_1} \right), \dots , \left( X_d, \|\cdot \|_{X_d} \right)$ and the (product) Banach space
\[
X = X_1 \times  X_2 \times \cdots \times X_d,
\] 
with the induced norm defined by
\[
\| x \|_X = \max \left(\| x^{(1)} \|_{X_1} , \dots, \| x^{(d)} \|_{X_d} \right),
\]
where $x=(x^{(1)},\dots,x^{(d)}) \in X$ with $x^{(j)} \in X_j$ for $j=1,\dots,d$. Denote by 
\[
B_r(y) = \{ x \in X \mid \|x-y \|_X \le r \}
\]
the closed ball of radius $r>0$ centered at $y \in X$.

Consider an interval of parameters $[\beta_0,\beta_1] \subset \mathbb R$ and $T:[\beta_0,\beta_1]\times X \to X$ a Fr\'echet differentiable operator. For each $j=1,\ldots,d$, denote by $T^{(j)} :[\beta_0,\beta_1]\times X \to X_j$ the projection of $T$ onto $X_j$. Let $\bx_{\beta_0},\bx_{\beta_1} \in X$ be approximate fixed points of $T(\beta_0,\cdot)$ and $T(\beta_1,\cdot)$, respectively, and define the linear interpolation
\begin{equation} \label{eq:centers}
\bx_\beta \bydef \frac{\beta_1-\beta}{\beta_1-\beta_0}\bx_{\beta_0} + \frac{\beta-\beta_0}{\beta_1-\beta_0}\bx_{\beta_1}.
\end{equation}
Define the line of \emph{centers} by $\{ \bx_\beta \mid \beta \in [\beta_0,\beta_1]\} \subset X$. For each $j=1,\dots,d$, define the bounds
\begin{alignat}{1}
	\sup_{\beta \in [\beta_0,\beta_1]} \left\| T^{(j)}(\beta,\bx_\beta) - \bx^{(j)}_\beta \right\|_{X_j} & \leq Y^{(j)} , \label{eq:Y_Bounds} \\
 \sup_{\substack{b,c \in B_r(0) \\ \beta \in[\beta_0,\beta_1]}} \left\| D_xT^{(j)}(\beta,\bx_\beta +b)c \right\|_{X_j}   & \leq Z^{(j)}(r), \label{eq:Z_Bounds}
\end{alignat}
for some $Y^{(j)} >0 $ and $Z^{(j)}: \R^+ \mapsto \R^+:r \to Z^{(j)}(r)$.
The goal of the radii polynomial approach is to provide an efficient way to prove that an operator is a uniform contraction over a subset of $X$. This subset consists of small balls around the line of centers, provided by the linear interpolation between two numerical approximations of solutions at different parameter values. 
\begin{definition} 
Let $X$ be a Banach space and $B \subset X$.	
Let $[\beta_0,\beta_1] \subset \mathbb R$ be a set of parameters. A function $\tilde T : [\beta_0,\beta_1]\times B \to B$ is a \emph{uniform contraction} if there exists a 
constant $\kappa$ such that $0 < \kappa < 1$ and such that 
 $\| \tilde T(\beta,x) - \tilde T(\beta,y) \|_X \le \kappa \| x - y \|_X$ for all  $x, y \in B$ and all $\beta \in [\beta_0,\beta_1]$.
\end{definition}

The following result is a restatement of the uniform contraction principle (e.g. see \cite{MR660633} for a proof).

\begin{theorem}[\bf Uniform Contraction Principle] \label{th_local}
Suppose there exists some $r>0$ such that 
\begin{equation} \label{eq:uniform_T_s}
\tilde T :
\left\{
\begin{aligned}
\left[\beta_0,\beta_1\right] \times B_r(0) &\longrightarrow B_r(0) \\
(\beta,x) &\longmapsto \tilde T(\beta,x) \bydef T(\beta,x+\bx_\beta )-\bx_\beta
\end{aligned}
\right.
\end{equation}
is a uniform contraction, then for every $\beta \in [\beta_0,\beta_1]$, there exists a unique $\tilde x(\beta) \in B_{r}(\bx_\beta)$ such that $T(\beta,\tilde x(\beta))=\tilde x(\beta)$. Moreover, the function $\beta\mapsto \tilde x(\beta)$ is of class $C^{k}$ if $(\beta,x)\mapsto T(\beta,x)$ is of class $C^{k}$. 
\end{theorem}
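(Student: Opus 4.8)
The plan is to separate the statement into its three assertions — existence, uniqueness, and $C^k$ dependence on $\beta$ — and to treat them in that order, reducing the first two to the classical Banach fixed point theorem (applied pointwise in $\beta$) and the last to the implicit function theorem. For existence and uniqueness, fix $\beta\in[\beta_0,\beta_1]$. The closed ball $B_r(0)$ is a closed subset of the Banach space $X$, hence a complete metric space, and by hypothesis $\tilde T(\beta,\cdot)$ maps $B_r(0)$ into itself and is Lipschitz on it with constant $\kappa<1$; Banach's theorem yields a unique $\tilde y_\beta\in B_r(0)$ with $\tilde T(\beta,\tilde y_\beta)=\tilde y_\beta$. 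Unwinding the definition $\tilde T(\beta,x)=T(\beta,x+\bx_\beta)-\bx_\beta$ and setting $\tilde x(\beta)\bydef\tilde y_\beta+\bx_\beta$, this says exactly that $T(\beta,\tilde x(\beta))=\tilde x(\beta)$ with $\tilde x(\beta)\in B_r(\bx_\beta)$. Conversely, any $z\in B_r(\bx_\beta)$ with $T(\beta,z)=z$ gives $z-\bx_\beta\in B_r(0)$ a fixed point of $\tilde T(\beta,\cdot)$, hence $z-\bx_\beta=\tilde y_\beta$; so $\tilde x(\beta)$ is the unique fixed point of $T(\beta,\cdot)$ in $B_r(\bx_\beta)$.

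Next, continuity of $\beta\mapsto\tilde x(\beta)$, which is the $k=0$ case and also the base of the subsequent bootstrap. For $\beta,\hat\beta\in[\beta_0,\beta_1]$ the contraction property gives
\[
\|\tilde y_\beta-\tilde y_{\hat\beta}\|_X
=\|\tilde T(\beta,\tilde y_\beta)-\tilde T(\hat\beta,\tilde y_{\hat\beta})\|_X
\le\kappa\,\|\tilde y_\beta-\tilde y_{\hat\beta}\|_X+\|\tilde T(\beta,\tilde y_{\hat\beta})-\tilde T(\hat\beta,\tilde y_{\hat\beta})\|_X ,
\]
hence $\|\tilde y_\beta-\tilde y_{\hat\beta}\|_X\le(1-\kappa)^{-1}\|\tilde T(\beta,\tilde y_{\hat\beta})-\tilde T(\hat\beta,\tilde y_{\hat\beta})\|_X$. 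Since $T$ is Fr\'echet differentiable, in particular continuous, and $\beta\mapsto\bx_\beta$ is affine, the right-hand side tends to $0$ as $\hat\beta\to\beta$; thus $\beta\mapsto\tilde y_\beta$ is continuous, and so is $\tilde x(\beta)=\tilde y_\beta+\bx_\beta$.

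Finally, $C^k$ regularity. Because $\beta\mapsto\bx_\beta$ is affine and $T$ is $C^k$, the map $\tilde T$ is $C^k$ on $[\beta_0,\beta_1]\times B_r(0)$; moreover, since $\tilde T(\beta,\cdot)$ is $\kappa$-Lipschitz, passing to the limit in difference quotients gives $\|D_x\tilde T(\beta,x)\|_{\mathcal{L}(X)}\le\kappa$ for interior $x$, and this extends to all of $B_r(0)$ by continuity of $D_x\tilde T$. Consequently $I-D_x\tilde T(\beta,x)$ is invertible for every $(\beta,x)$, with $\|(I-D_x\tilde T(\beta,x))^{-1}\|_{\mathcal{L}(X)}\le(1-\kappa)^{-1}$ by the Neumann series, uniformly in $(\beta,x)$. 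Applying the implicit function theorem to $G(\beta,x)\bydef x-\tilde T(\beta,x)$ — a $C^k$ map vanishing at $(\beta,\tilde y_\beta)$ with invertible partial derivative $D_xG=I-D_x\tilde T$ there — produces, near each $\beta^\ast\in[\beta_0,\beta_1]$, a $C^k$ map $\beta\mapsto\eta(\beta)$ with $G(\beta,\eta(\beta))=0$; by the uniqueness already established $\eta(\beta)=\tilde y_\beta$, so $\beta\mapsto\tilde y_\beta$, and hence $\beta\mapsto\tilde x(\beta)$, is $C^k$. Since $T$ is defined on all of $X$, both $\tilde T$ and the bound $\|D_x\tilde T\|_{\mathcal{L}(X)}\le\kappa<1$ persist on a slight open enlargement of $[\beta_0,\beta_1]\times B_r(0)$, so the implicit function theorem may legitimately be invoked on an open set; alternatively one differentiates $\tilde y_\beta=\tilde T(\beta,\tilde y_\beta)$ directly, obtaining $\tfrac{d}{d\beta}\tilde y_\beta=(I-D_x\tilde T(\beta,\tilde y_\beta))^{-1}D_\beta\tilde T(\beta,\tilde y_\beta)$, verifies this via an $o(h)$ estimate, and iterates, the right-hand side being a composition of $C^{k-1}$ maps.

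The first two assertions are routine; the crux is the $C^k$ statement, and the main obstacle there is twofold: converting the purely metric contraction hypothesis into an honest operator-norm bound on $D_x\tilde T$ (hence a uniform-in-$\beta$ bounded inverse for $I-D_x\tilde T$), and then applying the implicit function theorem despite the domain $[\beta_0,\beta_1]\times B_r(0)$ being closed — handled either by the harmless open extension above or by the direct differentiation-and-induction argument. Throughout, it is the uniformity of $\kappa$, and therefore of $(1-\kappa)^{-1}$, in $\beta$ — precisely the content of the word \emph{uniform} in ``uniform contraction'' — that makes every estimate, and the final regularity, uniform on the whole parameter interval.
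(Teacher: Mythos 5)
The paper does not actually prove this theorem --- it is quoted as a restatement of the classical uniform contraction principle with a citation to Chow--Hale --- and your proof is essentially the standard argument found there: Banach's fixed point theorem pointwise in $\beta$ on the complete metric space $B_r(0)$, the $(1-\kappa)^{-1}$ estimate for continuity, and the operator-norm bound $\|D_x\tilde T(\beta,x)\|\le\kappa$ (hence invertibility of $I-D_x\tilde T$ via the Neumann series) feeding into the implicit function theorem, or equivalently direct differentiation of the fixed point equation, for $C^k$ regularity. The one point worth tightening is the identification $\eta(\beta)=\tilde y_\beta$: the uniqueness you established holds only among fixed points lying in $B_r(0)$, and the branch $\eta(\beta)$ produced by the implicit function theorem is not a priori confined to that ball, so you should instead combine the implicit function theorem's own local uniqueness with the continuity of $\beta\mapsto\tilde y_\beta$ proved in your preceding paragraph.
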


With the bounds $Y$ and $Z$ on the residue and the derivative of $T$, see Equations~\eqref{eq:Y_Bounds} and~\eqref{eq:Z_Bounds}, contractivity can be checked explicitly. This is expressed by the next theorem (we refer to~\cite{MR3125637,MR2338393,MR2630003} for a proof).

\begin{theorem}\label{contr}
Given a set of parameters $[\beta_0,\beta_1]\subset\mathbb{R}$, consider the set of centers $\{ \bx_\beta \mid \beta \in [\beta_0,\beta_1]\}$ with $\bx_\beta$ given by \eqref{eq:centers}.
Assume that $T: [\beta_0,\beta_1] \times X \to X$ is an operator satisfying the bounds  \eqref{eq:Y_Bounds} and \eqref{eq:Z_Bounds}. If 
there exists $r>0$ such that $Y^{(j)} + Z^{(j)}(r)<r$, for each $j=1,\dots,d$, then $\tilde T$, defined by \eqref{eq:uniform_T_s}, is a uniform contraction
(on $B_r(0)$).
\end{theorem}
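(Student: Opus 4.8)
The plan is to show that the hypothesis $Y^{(j)} + Z^{(j)}(r) < r$ for every $j$ forces the map $\tilde T$ to send $B_r(0)$ into itself and to be a Lipschitz contraction there, so that Definition~\ref{PolRayon}'s notion of uniform contraction (uniform in $\beta$) applies and Theorem~\ref{th_local} can be invoked. First I would verify the self-mapping property. Fix $\beta \in [\beta_0,\beta_1]$ and $x \in B_r(0)$. For each coordinate $j$, write $\tilde T^{(j)}(\beta,x) = T^{(j)}(\beta, x + \bx_\beta) - \bx^{(j)}_\beta$ and split it as $\bigl(T^{(j)}(\beta,\bx_\beta) - \bx^{(j)}_\beta\bigr) + \bigl(T^{(j)}(\beta, \bx_\beta + x) - T^{(j)}(\beta,\bx_\beta)\bigr)$. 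The first bracket is controlled by $Y^{(j)}$ via \eqref{eq:Y_Bounds}. For the second bracket I would apply the mean value inequality along the segment from $\bx_\beta$ to $\bx_\beta + x$: since $T$ is Fr\'echet differentiable, $T^{(j)}(\beta,\bx_\beta+x) - T^{(j)}(\beta,\bx_\beta) = \int_0^1 D_xT^{(j)}(\beta,\bx_\beta + sx)\,x\, ds$, and every point $\bx_\beta + sx$ with $s\in[0,1]$ has $sx \in B_r(0)$, so the integrand is bounded in $X_j$-norm by $Z^{(j)}(r)$ using \eqref{eq:Z_Bounds} (taking $b = sx$ and $c = x$, both in $B_r(0)$ when $\|x\|_X \le r$). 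Hence $\|\tilde T^{(j)}(\beta,x)\|_{X_j} \le Y^{(j)} + Z^{(j)}(r) < r$ for each $j$, and taking the max over $j$ gives $\|\tilde T(\beta,x)\|_X < r$, so $\tilde T(\beta,\cdot)$ maps $B_r(0)$ into $B_r(0)$.

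Next I would establish the contraction estimate. Take $x,y \in B_r(0)$ and $\beta \in [\beta_0,\beta_1]$. Then $\tilde T^{(j)}(\beta,x) - \tilde T^{(j)}(\beta,y) = T^{(j)}(\beta,\bx_\beta + x) - T^{(j)}(\beta,\bx_\beta + y) = \int_0^1 D_xT^{(j)}\bigl(\beta, \bx_\beta + y + s(x-y)\bigr)(x-y)\, ds$. For $s \in [0,1]$ the point $y + s(x-y)$ lies in $B_r(0)$ by convexity, and $x - y$ lies in $B_{2r}(0)$; by homogeneity of the derivative in its last argument we can write $D_xT^{(j)}(\beta,\bx_\beta + b)(x-y) = \|x-y\|_X \cdot D_xT^{(j)}(\beta,\bx_\beta + b)\hat c$ where $\hat c = (x-y)/\|x-y\|_X$ has norm $\le r$ (rescaling so that $\hat c \in B_r(0)$ costs a factor $\|x-y\|_X / r$), giving the bound $\|D_xT^{(j)}(\beta,\bx_\beta+b)(x-y)\|_{X_j} \le \frac{Z^{(j)}(r)}{r}\|x-y\|_X$. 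Therefore $\|\tilde T^{(j)}(\beta,x) - \tilde T^{(j)}(\beta,y)\|_{X_j} \le \frac{Z^{(j)}(r)}{r}\|x-y\|_X$, and taking the max over $j$ yields $\|\tilde T(\beta,x) - \tilde T(\beta,y)\|_X \le \kappa \|x-y\|_X$ with $\kappa \bydef \max_j Z^{(j)}(r)/r$. The hypothesis $Y^{(j)} + Z^{(j)}(r) < r$ with $Y^{(j)} > 0$ forces $Z^{(j)}(r) < r$, hence $\kappa < 1$; and $\kappa$ is independent of $\beta$, which is exactly the uniformity required.

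The main technical subtlety — and the one step I would be most careful about — is the rescaling argument used to pass from the bound \eqref{eq:Z_Bounds}, which only controls $D_xT^{(j)}(\beta,\bx_\beta+b)c$ for $b,c$ ranging over the \emph{fixed} ball $B_r(0)$, to a Lipschitz estimate involving arbitrary increments $x - y$ of norm up to $2r$. This works cleanly only because $D_xT^{(j)}(\beta,\bx_\beta+b)$ is linear in the second slot, so one can factor out $\|x-y\|_X$ and replace $x-y$ by a unit-scaled vector inside $B_r(0)$; one should double-check that the definition of $Z^{(j)}$ indeed supplies this linear-operator norm bound (rather than, say, only a bound on a difference quotient), so that the factor-of-$r$ bookkeeping is legitimate. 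The integral (mean value) representation of the increments of $T^{(j)}$ also tacitly uses that $B_r(0)$ is convex and that $t \mapsto D_xT^{(j)}(\beta,\bx_\beta + tx)x$ is continuous, both of which are immediate here. Apart from that, the argument is a routine unwinding of the definitions, and the uniform contraction principle (Theorem~\ref{th_local}) then delivers the conclusion.
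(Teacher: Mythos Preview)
Your argument is correct and is precisely the standard one; the paper itself does not give a proof of this theorem but refers to \cite{MR3125637,MR2338393,MR2630003}, where the same mean-value/rescaling argument is used. Two minor remarks: (i) the integral representation of the increment is more than you need and tacitly assumes continuity of $D_xT$ along segments, whereas the mean value \emph{inequality} $\|T^{(j)}(\beta,\bx_\beta+x)-T^{(j)}(\beta,\bx_\beta+y)\|_{X_j}\le \sup_{s\in[0,1]}\|D_xT^{(j)}(\beta,\bx_\beta+y+s(x-y))(x-y)\|_{X_j}$ suffices and holds under mere Fr\'echet differentiability; (ii) the definition you cite for ``uniform contraction'' is the unnumbered Definition preceding Theorem~\ref{th_local}, not Definition~\ref{PolRayon} (which defines the radii polynomials).
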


Assuming we have determined explicit bounds $Y^{(j)}$ and $Z^{(j)}(r)$, where in practice the latter is a polynomial with positive coefficients, satisfying \eqref{eq:Y_Bounds} and \eqref{eq:Z_Bounds}. It is convenient to introduce the radii polynomials, which provide an efficient way in verifying the hypotheses of Theorem~\ref{contr}.
\begin{definition}\label{PolRayon}
Let $Y=(Y^{(1)},\dots,Y^{(d)})$ and $Z=(Z^{(1)},\dots,Z^{(d)})$ be the bounds on the operator $T_\beta$ as given by \eqref{eq:Y_Bounds} and \eqref{eq:Z_Bounds} respectively. We define the radii polynomials as
\begin{equation} \label{eq:def_rad_poly_general}
	p_j(r)\bydef Y^{(j)} + Z^{(j)}(r) -r, \quad j=1,\dots,d.
\end{equation}
\end{definition}
One can see that the radii polynomials depend on the upper bounds $Y$ and $Z$, and therefore they are not uniquely defined. 
But the smaller these bounds are, the higher the chances are to prove that the operator $T_\beta$ is a contraction over a ball around the approximation. 
The following result shows how the radii polynomials are used in practice to give us the value of $r$ for which we can apply Theorem~\ref{contr}.

\begin{proposition}\label{prop:Radii}
Let
\[
	\mathcal{I} \bydef \bigcap_{j=1}^d \lbrace r>0 \mid p_j(r)<0 \rbrace,
\]
and assume that $\mathcal{I} \neq \emptyset$. Then $\mathcal{I}$ is an open interval of $\mathbb R^+$, i.e., $\mathcal{I} = (r_{\rm min},r_{\rm max})$. 
For any $r_0 \in (r_{\rm min},r_{\rm max})$, $\tilde T: B_{r_0}(0) \times \left[\beta_0,\beta_1\right] \longrightarrow B_{r_0}(0)$ is a uniform contraction.
\end{proposition}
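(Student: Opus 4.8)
The plan is to deduce Proposition~\ref{prop:Radii} directly from Theorem~\ref{contr} together with elementary properties of the radii polynomials $p_j$. The key observation is that each $p_j(r) = Y^{(j)} + Z^{(j)}(r) - r$ is (in practice) a polynomial in $r$ whose value at $r=0$ is $Y^{(j)} > 0$, and whose leading behaviour is dominated by the $Z^{(j)}$-part when $Z^{(j)}$ has degree $\geq 2$, or by the $-r$ term otherwise; in any case we do not need the exact shape, only that $\{r>0 \mid p_j(r)<0\}$ is an interval. So the first step is to argue that each set $\mathcal{I}_j \bydef \{r>0 \mid p_j(r)<0\}$ is an open interval of $\R^+$. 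Since $Z^{(j)}$ has nonnegative coefficients, $r \mapsto Z^{(j)}(r)/r$ is nondecreasing on $(0,\infty)$, hence $r \mapsto (Y^{(j)} + Z^{(j)}(r))/r$ is the sum of a strictly decreasing function $Y^{(j)}/r$ and a nondecreasing one; one checks it is either strictly decreasing, or first strictly decreasing then nondecreasing, so the sublevel set $\{r : (Y^{(j)}+Z^{(j)}(r))/r < 1\}$ — which equals $\mathcal{I}_j$ — is an interval. (Alternatively, and more robustly, one simply notes $p_j$ is continuous, $p_j(0^+) = Y^{(j)} > 0$, and convexity of $Z^{(j)}$ makes $p_j$ convex, so its negativity set is an interval.) Openness follows from continuity of $p_j$.

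The second step is to intersect: $\mathcal{I} = \bigcap_{j=1}^d \mathcal{I}_j$ is a finite intersection of open intervals, hence an open interval, and by hypothesis it is nonempty, so we may write $\mathcal{I} = (r_{\min}, r_{\max})$ with $0 \leq r_{\min} < r_{\max} \leq \infty$. The third and final step is to invoke Theorem~\ref{contr}: for any $r_0 \in (r_{\min},r_{\max}) = \mathcal{I}$, we have by definition $r_0 \in \mathcal{I}_j$ for every $j$, i.e. $p_j(r_0) < 0$, which is exactly the condition $Y^{(j)} + Z^{(j)}(r_0) < r_0$ for all $j=1,\dots,d$. Theorem~\ref{contr} then yields that $\tilde T$, defined by \eqref{eq:uniform_T_s}, is a uniform contraction on $B_{r_0}(0)$, which is the claim.

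I expect the only nontrivial point — and hence the ``main obstacle,'' though it is quite mild — to be justifying rigorously that each $\mathcal{I}_j$ is an \emph{interval} rather than a possibly disconnected sublevel set. This is where one genuinely uses the structural assumption that $Z^{(j)}$ is a polynomial with positive coefficients (equivalently, that $Z^{(j)}$ is convex and nondecreasing with $Z^{(j)}(0) \geq 0$), so that $p_j$ is convex on $\R^+$; a convex function has a convex (hence connected) negativity set. Without such an assumption on the $Z$ bounds the statement could fail, so the proof should explicitly flag that the positivity/convexity of the coefficients of $Z^{(j)}$ is what is being used here. Everything else — the finite intersection of intervals being an interval, openness from continuity, and the translation of $p_j(r_0)<0$ into the hypothesis of Theorem~\ref{contr} — is routine and requires no computation.
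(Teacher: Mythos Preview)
Your proof is correct and follows the standard approach. The paper itself does not include a proof of this proposition; it is stated without proof, with the surrounding framework referred to \cite{MR3125637,MR2338393,MR2630003}. Your convexity argument (using that $Z^{(j)}$ is a polynomial with nonnegative coefficients, hence $p_j$ is convex on $\R^+$, hence its negativity set is an interval) is exactly the argument one finds in those references, and your reduction to Theorem~\ref{contr} is the intended one.
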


%%%%%%%%%%%%%%%%%%%%%%%%%%%%%%%%%%%%%%%%%%%%%%%%%%
%%%%%%%%%%%%%%%%%%%% STABLE MANIFOLD %%%%%%%%%%%%%%%%%%%
%%%%%%%%%%%%%%%%%%%%%%%%%%%%%%%%%%%%%%%%%%%%%%%%%%

\section{Parameterization of the stable manifold}\label{Manifold}

In this section we compute an approximate parameterization of the (local) stable manifold at 0, and provide explicit error bounds on this parameterization. This is done by combining the ideas of the \textit{parameterization method} (first introduced in \cite{parameterization_method,MR1976080,MR2177465}, see also~\cite{MR3467671}) and of \textit{rigorous computation} (following the approach of \cite{BMR,maximizing_manifold}). Having computed the parameterization, we will be able to obtain the homoclinic connection in the next section by taking advantage of the fact that it is now enough to compute an orbit on a finite time interval, i.e., an orbit that ends up in the local stable and unstable manifolds (or rather, we compute and verify an orbit that starts from the symmetric section and ends up, after some finite time, in the local stable manifold, see~\eqref{eq:brigdeODE}).

\subsection{Looking for the stable manifold as a zero finding problem \boldmath$F(\beta,a)=0$\unboldmath}
\label{s:manintro}

The first step is to recast the problem of finding a parameterization as
looking for a zero of a map $F$, which is the aim of this section. Setting
\begin{equation*}
	\Psi_{\beta}(v) \bydef \begin{pmatrix} v_2+v_1v_2\\ v_3\\ v_4\\-\beta
v_3-v_1\end{pmatrix},
\end{equation*}
Equation~\eqref{quadratic_system} is rewritten as
$v'=\Psi_{\beta}(v)$. The Jacobian at the origin is  
\begin{equation*}
D\Psi_{\beta}(0)=
  \begin{pmatrix}
  0 & 1 & 0 & 0\\
  0 & 0 & 1 & 0\\
  0 & 0 & 0 & 1\\
  -1 & 0 & -\beta & 0
  \end{pmatrix},
\end{equation*}
and one finds that for $\beta\in[0,2)$ it has two complex conjugated eigenvalues with negative real part, which we denote by $\lambda(\beta)$ and $\lambda^*(\beta)$:
\begin{equation}
\label{eq:lambda}
\lambda(\beta) = -\frac{1}{2}\sqrt{2-\beta} + \text{i} \frac{1}{2}\sqrt{2+\beta} .
\end{equation}
The associated eigenvectors are given by $V(\beta)$ and $V^*(\beta)$, where
\begin{equation}
\label{eq:V}
V(\beta)=\begin{pmatrix}
1\\ \lambda(\beta) \\ \lambda(\beta)^2 \\ \lambda(\beta)^3
\end{pmatrix}.
\end{equation}

The stable manifold at $0$ is thus two dimensional. Since $\Psi_{\beta}$ is analytic we may look for an analytic local parameterization of this manifold. We will look for this parameterization as a power series
\begin{equation}
\label{Q}
Q_{\beta}(\theta)=\sum_{\vert\alpha\vert\geq 0}a_{\alpha}(\beta)\theta^{\alpha},\quad 
\theta=\begin{pmatrix}\theta_1\\ \theta_{2}\end{pmatrix} \in {\mathbb{C}}^{2},
\ a_{\alpha}(\beta) = \begin{pmatrix}a_{\alpha}^{(1)}(\beta) \\  a_{\alpha}^{(2)}(\beta) \\  a_{\alpha}^{(3)}(\beta) \\  a_{\alpha}^{(4)}(\beta)\end{pmatrix} \in\mathbb{C}^4,
\end{equation}
with standard multi-index notation: $\alpha\in\mathbb{N}^2$, $\vert \alpha\vert =\alpha_1+\alpha_2$, $\theta^{\alpha}=\theta_1^{\alpha_1}\theta_2^{\alpha_2}$, and satisfying
\begin{equation}
\label{eq:additional_conditions}
Q_{\beta}(0)=0,\quad DQ_{\beta}(0)=\begin{pmatrix} V(\beta) & V^*(\beta) \end{pmatrix},
\end{equation}
together with the invariance equation
\begin{equation}
\label{eq:invariance_equation}
DQ_{\beta}(\theta)\begin{pmatrix} \lambda(\beta) & 0\\ 0 & \lambda^*(\beta) \end{pmatrix} \theta = \Psi_{\beta}(Q_{\beta}(\theta)).
\end{equation}

\begin{remark}
Even though the vector field $\Psi_{\beta}$ is real, the fact that we have two complex eigenvalues makes it easier to first look for a parameterization $Q_{\beta}$ of the complex manifold and then recover the real parameterization (the one which will be of interest in the next section for computing the homoclinic orbit) by considering
\begin{equation*}
P_{\beta}(\theta) \bydef Q_{\beta}(\theta_1+ \rm{i} \theta_2,\theta_1- \rm{i} \theta_2), \quad \text{for }\theta\in\mathbb{R}^2,
\end{equation*}
see \cite{MirelessMischaikow,BMR} for more details. This is due to the underlying symmetry
$a_{(\alpha_2,\alpha_1)}=a^*_{(\alpha_1,\alpha_2)}$, which is respected by the function $F$ introduced below.
\end{remark}

Plugging the power series~\eqref{Q} into the invariance equation \eqref{eq:invariance_equation} we get 
\begin{equation}
\label{eq:invariance_in_power_series}
\sum_{\vert \alpha \vert \geq 0} (\alpha_1 \lambda(\beta) + \alpha_2\lambda^*(\beta))a_{\alpha}(\beta)\theta^{\alpha} = \sum_{\vert \alpha\vert \geq 0} 
\begin{pmatrix}
a_{\alpha}^{(2)}(\beta) + (a^{(1)}(\beta)\star a^{(2)}(\beta))_{\alpha} \\
a_{\alpha}^{(3)}(\beta) \\
a_{\alpha}^{(4)}(\beta) \\
-a_{\alpha}^{(1)}(\beta) -\beta a_{\alpha}^{(3)}(\beta)
\end{pmatrix}
\theta^{\alpha},
\end{equation}
where $\star$ stands for the Cauchy product. We recall that, given two sequences $u$ and $v$ of complex numbers (indexed over $\mathbb{N}^2$), their Cauchy product is the sequence defined by
\begin{equation*}
(u\star v)_{\alpha}=\sum_{0\leq \sigma \leq \alpha} u_{\sigma}v_{\alpha-\sigma}, \qquad\text{for all }\alpha\in\mathbb{N}^2,
\end{equation*}
where $\sigma\leq \alpha$ means $\sigma_1\leq\alpha_1$ and $\sigma_2\leq\alpha_2$ (and similarly $\alpha-\sigma=(\alpha_1-\sigma_1,\alpha_2-\sigma_2)$).

Notice that the additional conditions \eqref{eq:additional_conditions} imply that the coefficients of total degree 0 and 1 are equal on both sides of Equation~\eqref{eq:invariance_in_power_series}.

Finding an analytic parameterization of the local manifold is now equivalent to find a zero of $F(\beta,\cdot)$, defined component-wise by
\begin{equation*}
F(\beta,a) =
\left\{
\begin{aligned}
& a_{0,0}  \quad &&\text{if }\alpha=(0,0), \\
& a_{1,0} - V(\beta) \quad &&\text{if }\alpha = (1,0), \\
& a_{0,1} - V^*(\beta) \quad &&\text{if }\alpha = (0,1), \\
& (\alpha_1 \lambda(\beta) + \alpha_2\lambda^*(\beta))a_{\alpha} - 
\begin{pmatrix}
a_{\alpha}^{(2)} + (a^{(1)}\star a^{(2)})_{\alpha} \\
a_{\alpha}^{(3)} \\
a_{\alpha}^{(4)} \\
-a_{\alpha}^{(1)} -\beta a_{\alpha}^{(3)}
\end{pmatrix} \quad&&\text{for } \vert\alpha\vert\geq 2.
\end{aligned}
\right.
\end{equation*}

\subsection{Getting to the fixed point formulation}
\label{s:fixedpointmanifold}

Let $\nu \geq 1$
and denote by $\ellnu$ the Banach space of complex valued sequences $u=\left(u_\alpha\right)_{\vert\alpha\vert\geq 0}$ such that
\begin{equation*}
\left\Vert  u \right\Vert_{1,\nu} \bydef \sum_{\vert\alpha\vert=0}^\infty \vert u_\alpha\vert \nu^{\vert\alpha\vert}< \infty .
\end{equation*}
This space is a Banach algebra under the Cauchy product, which gives us control on the quadratic terms. 
\begin{lemma}\label{LemmaNorm_cauchy}
For $u,v \in \ellnu$, $\left\Vert u \star v \right\Vert_{1,\nu} \leq \left\Vert u \right\Vert_{1,\nu} \left\Vert v \right\Vert_{1,\nu} $.
\end{lemma}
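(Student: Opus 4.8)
The plan is to prove the Banach algebra inequality $\|u \star v\|_{1,\nu} \le \|u\|_{1,\nu}\|v\|_{1,\nu}$ by a direct computation, expanding the definition of the weighted $\ell^1$ norm and reorganizing the resulting double sum. First I would write
\[
\|u\star v\|_{1,\nu} = \sum_{|\alpha|\ge 0} |(u\star v)_\alpha|\,\nu^{|\alpha|}
= \sum_{|\alpha|\ge 0} \left| \sum_{0\le\sigma\le\alpha} u_\sigma v_{\alpha-\sigma} \right| \nu^{|\alpha|},
\]
and then apply the triangle inequality inside to bound $|(u\star v)_\alpha| \le \sum_{0\le\sigma\le\alpha} |u_\sigma|\,|v_{\alpha-\sigma}|$. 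Using $\nu^{|\alpha|} = \nu^{|\sigma|}\nu^{|\alpha-\sigma|}$ (since $|\cdot|$ is additive on $\mathbb{N}^2$), this gives
\[
\|u\star v\|_{1,\nu} \le \sum_{|\alpha|\ge 0} \sum_{0\le\sigma\le\alpha} |u_\sigma|\nu^{|\sigma|}\,|v_{\alpha-\sigma}|\nu^{|\alpha-\sigma|}.
\]

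Next I would change the order of summation, reindexing the double sum over all pairs $(\sigma,\tau)\in\mathbb{N}^2\times\mathbb{N}^2$ via $\tau = \alpha-\sigma$; since every $\alpha$ and every admissible $\sigma\le\alpha$ corresponds bijectively to a pair $(\sigma,\tau)$ of nonnegative multi-indices, Tonelli's theorem for nonnegative series lets us rewrite the right-hand side as
\[
\left( \sum_{|\sigma|\ge 0} |u_\sigma|\nu^{|\sigma|} \right)\left( \sum_{|\tau|\ge 0} |v_\tau|\nu^{|\tau|} \right) = \|u\|_{1,\nu}\|v\|_{1,\nu},
\]
which is exactly the claimed bound; this also shows the Cauchy product of two elements of $\ellnu$ is again in $\ellnu$, so the product is well-defined.

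The only genuine subtlety — rather than a true obstacle — is the bookkeeping for the rearrangement of the series: one must justify that interchanging the order of summation is legitimate, which is immediate here because all terms are nonnegative (so Tonelli applies) and because the map $(\alpha,\sigma)\mapsto(\sigma,\alpha-\sigma)$ is a bijection between $\{(\alpha,\sigma): \sigma\le\alpha\}$ and $\mathbb{N}^2\times\mathbb{N}^2$. Everything else is elementary. I would present the argument as the short chain of (in)equalities above, remarking that finiteness of the final product guarantees the interchange and membership in $\ellnu$, so no separate convergence argument is needed.
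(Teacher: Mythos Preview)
Your proof is correct and is precisely the standard argument for the Banach algebra property of weighted $\ell^1$ spaces. The paper itself states this lemma without proof, treating it as a well-known fact, so there is nothing to compare against; your write-up would serve perfectly well as the omitted proof.
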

\begin{definition}
In this section we consider 
\begin{equation*}
X\bydef (\ell^1_{\nu})^4, \quad \text{with the norm }\left\Vert a \right\Vert_{X}\bydef \max\limits_{j=1,\ldots,4} \bigl\Vert a^{(j)} \bigr\Vert_{1,\nu}.
\end{equation*}
\end{definition}
We are going to look for zeros $a$ of $F(\beta,\cdot)$ in the space $X$. Notice that $a\in X$ means that
\begin{equation*}
\sum_{\vert\alpha\vert\geq 0} \bigl\vert a^{(j)}_{\alpha} \bigr\vert  \nu^{\vert\alpha\vert} < \infty,\quad \text{for } j=1,\ldots,4,
\end{equation*}
which ensures that the associated parameterization $Q_{\beta}$ is well defined at least for 
\begin{equation*}
\left\vert \theta\right\vert_{\infty} \bydef \max\left(\vert\theta_1\vert,\vert\theta_2\vert\right) \leq \nu.
\end{equation*}

We now explain how to rigorously determine a parameterization of the manifold for all values of $\beta$ in a given interval $[\beta_0,\beta_1]$. It will be more convenient to work with a rescaled parameter $s$ ranging between $0$ and $1$. Therefore we define  
\begin{equation*}
\beta_s=\beta_0+s(\beta_1-\beta_0)=\beta_0+s\Delta\beta, \quad \text{for } s\in[0,1].
\end{equation*}
We want to get a parameterization $a(s)$ such that 
\begin{equation*}
F(\beta_s,a(s))=0,\quad \text{for } s\in[0,1].
\end{equation*}
Since we are working on the interval $[\beta_0,\beta_1]$, parameterized by $s \in [0,1]$, we have altered the notation of the parametrization of the coefficients $a$ slightly compared to Section~\ref{s:manintro}, namely $a(s)$ instead of $a (\beta)$.
We will use $a(s)$ throughout the remainder of the paper, except in Section~\ref{s:rewriting}, where the notation $a(\beta)$ is more appropriate.

We first compute approximate zeros $\overa(0)$ and $\overa(1)$ of $F(\beta_0,\cdot)$ and $F(\beta_1,\cdot)$ respectively, by solving numerically the truncated problem (for $s=0$ and $s=1$)
\begin{equation*}
F^{[N]}(\beta_s,\cdot) \bydef \left(F_{\alpha}(\beta_s,\cdot)\right)_{0\leq\vert\alpha\vert<N}=0,
\end{equation*} 
for some $N \geq 1$,
and by padding the obtained solutions with 0 to get elements of $X=(\ell^1_{\nu})^4$. We then define for $s\in[0,1]$
\begin{equation*}
\overa(s) \bydef \overa(0)+s(\overa(1)-\overa(0))=\overa(0)+s\Delta\overa.
\end{equation*}
If $\overa(0)$ and $\overa(1)$ are two good approximate zeros (of $F(\beta_0,\cdot)$ and $F(\beta_1,\cdot)$ respectively) and if $\vert\beta_1-\beta_0\vert$ is not too large, $\overa(s)$ should be a good approximate zero of $F(\beta_s,\cdot)$ for each $s\in[0,1]$. We are going to reformulate this claim into a mathematical statement and prove that in a given neighbourhood of $\overa(s)$ there exist a unique zero of $F(\beta_s,\cdot)$ for all $s\in[0,1]$. 
To put this in the framework described in Section~\ref{sec:radii_poly_approach}, we consider the operator 
\begin{equation*}
T(\beta,a)=a-AF(\beta,a),
\end{equation*}
where $A$, defined below, is an approximate inverse of $D_aF(\beta_0,\overa(0))$. Namely, for $N$ large enough,
\begin{equation*}
A^{\dag} \bydef
\begin{pmatrix}
D_aF^{[N]}(\beta_0,\overa(0)) & & 0 & \\
 & \tilde{M}_N & & \\
0 & & \tilde{M}_{N+1} & \\
 & & & \ddots\\
\end{pmatrix}
\end{equation*}
should be a reasonably good approximation of $D_aF(\beta_0,\overa(0))$, where, for any $k\geq N$, $\tilde{M}_k$ is the $4(k+1)$ by $4(k+1)$ block diagonal matrix 
\begin{equation*}
\tilde{M}_k \bydef
\begin{pmatrix}
k\lambda(\beta_0) I_4 & & 0 & \\
 & ((k-1)\lambda(\beta_0)+\lambda^*(\beta_0))I_4 & & \\
0 & & \ddots & \\
 & & & k\lambda^*(\beta_0) I_4
\end{pmatrix},
\end{equation*}
with $I_4$ the 4 by 4 identity matrix. Finally, we define $A$ as
\begin{equation}\label{e:defA}
	A \bydef
\begin{pmatrix}
J & & 0 & \\
 & M_N & & \\
0 & & M_{N+1} & \\
 & & & \ddots\\
\end{pmatrix},
\end{equation}
where $J$ is a numerical approximation of $\left(D_aF^{[N]}(\beta_0,\overa(0))\right)^{-1}$, while the $M_k=\tilde{M}_k^{-1}$ are exact inverses. 
The operators $A^\dagger$ and $A$ are then approximate inverses of each other: approximate in the finite part and exact in the infinite tail.
\begin{remark}\label{r:Aijalpha}
To make sense of this matrix representation of $A^\dagger$ and $A$, as well as $\tilde{M}_k$ and $M_k$, one should think of $a_\alpha$ as an infinite vector where the elements are ordered according to increasing degree $\vert\alpha\vert = \alpha_1+\alpha_2$ and within fixed degree by increasing $\alpha_2$, while also taking into account that each $a_{\alpha}$ is a vector in $\mathbb{C}^4$. This means that $a$ is represented as
\begin{equation*}
a=\begin{pmatrix}
a_{0,0} \\ a_{1,0} \\ a_{0,1} \\ a_{2,0} \\ a_{1,1} \\ a_{0,2} \\ \vdots
\end{pmatrix}, 
\text{ where }  
a_{\alpha}= \begin{pmatrix}
a_{\alpha}^{(1)} \\ a_{\alpha}^{(2)} \\ a_{\alpha}^{(3)}  \\ a_{\alpha}^{(4)}
\end{pmatrix}
\text{ for each }\alpha\in\mathbb{N}^2, \text{ and that }
a^{(j)}=\begin{pmatrix}
a_{0,0}^{(j)} \\ a_{1,0}^{(j)} \\ a_{0,1}^{(j)} \\ a_{2,0}^{(j)} \\ a_{1,1}^{(j)} \\ a_{0,2}^{(j)} \\ \vdots
\end{pmatrix}
\text{ for }j=1,\ldots,4.
\end{equation*}
The above representation describes the operators as infinite matrices 
where each element $A_{\alpha',\alpha}$ is a linear operator on $\mathbb{C}^4$,
i.e.\ a $4\times4$ matrix that we will occasionally denote by $A_{\alpha',\alpha}=\{A_{\alpha',\alpha}^{(i,j)} \}_{1\leq i,j \leq 4}$.
\end{remark}

We now follow the ideas described in Section~\ref{sec:radii_poly_approach}, using the Banach space $X=\left(\ell^1_{\nu}\right)^4$ endowed with the norm $\Vert a\Vert_X=\max\limits_{j=1,\ldots,4} \Vert a^{(j)}\Vert_{1,\nu}$. In the next subsections we are going to compute the bounds $Y^{(j)}$ and $Z^{(j)}(r)$ and the associated radii polynomials, and then prove that for some positive $r$ each radii polynomial $p_j(r)$ is negative, which will yield (for each $s\in[0,1]$) the existence of a unique zero $a(\beta_s)$ of $F(\beta_s,\cdot)$ in the ball of radius $r$ around $\overa(s)$. At this point we will know that $\overa(s)$ defines an approximate parameterization of the stable manifold, with an error bound controlled by $r$. We will use this in Section~\ref{orbit} to prove the existence of a homoclinic orbit for all $\beta\in[\beta_0,\beta_1]$. Moreover, derivatives of the manifold with respect to $\theta$, which will be needed in Section~\ref{s:rewriting}, can also be approximately computed with rigorous control on the error bound, see Lemma~\ref{bornederive}. 

\subsection{The bound \boldmath$Y$\unboldmath}

In this section we focus on the bound $Y$ defined in \eqref{eq:Y_Bounds}. Let $\vert A \vert$ denote the component-wise absolute value of $A$. In order to define the bound we are looking for, we try to bound every term $\left(T(\beta_s,\overa(s))-\overa(s)\right)_\alpha^{(j)}$ with $|\alpha|\geq 0$ and $j=1,2,3,4$:
\begin{align*} &\left|\left(T(\beta_s,\overa(s))-\overa(s)\right)^{(j)}_\alpha\right| 
 = \left|\big(AF(\beta_s,\overa(s))\big)^{(j)}_\alpha\right| \\
& \qquad \leq \Bigg(\left\vert A \right\vert \bigg(\left\vert F(\beta_0,\overa(0))\right\vert + \left\vert D_aF(\beta_0,\overa(0))\Delta\overa + D_{\beta}F(\beta_0,\overa(0))\Delta\beta\right\vert \\
& \qquad \qquad + \frac{1}{2}\max\limits_{s\in[0,1]}\left\vert D^2_{aa} F(\beta_s,\overa(s))(\Delta\overa)^2 + 2D^2_{a\beta} F(\beta_s,\overa(s))\Delta\overa\Delta\beta 
+ D^2_{\beta\beta} F(\beta_s,\overa(s))(\Delta\beta)^2 \right\vert\bigg)\Bigg)_{\alpha}^{(j)}.
\end{align*}
A straightforward calculation (using that $\vert\lambda(\beta)\vert=1$ and computing the derivatives of $\lambda$ and $V$ with respect to $\beta$) yields that, for all $\vert\alpha\vert\geq 0$,
\begin{equation*}
\frac{1}{2}\max\limits_{s\in[0,1]}\left\vert D^2_{aa} F_{\alpha}(\beta_s,\overa(s))(\Delta\overa)^2 + 2D^2_{a\beta} F_{\alpha}(\beta_s,\overa(s))\Delta\overa\Delta\beta + D^2_{\beta\beta} F_{\alpha}(\beta_s,\overa(s))(\Delta\beta)^2 \right\vert \leq G_{\alpha},
\end{equation*}
where
\begin{equation*}
G_{\alpha} \bydef \left\{
\begin{aligned}
& 0 \quad && \alpha=(0,0), \\
& \frac{1}{2}\left(\frac{1}{4}\sqrt{\frac{4+3\beta_1^2}{(2-\beta_1)^3(2+\beta_1)^3}}\begin{pmatrix}
0\\
1\\
2\\
3
\end{pmatrix} +\frac{1}{4}\frac{1}{(2-\beta_1)(2+\beta_1)}\begin{pmatrix}
0\\
0\\
2\\
6
\end{pmatrix}\right)(\Delta\beta)^2 \quad && \vert\alpha\vert=1,\\
& \frac{1}{4}\sqrt{\frac{(\alpha_1+\alpha_2)^2}{2-\beta_1}+\frac{(\alpha_1-\alpha_2)^2}{2+\beta_0}}\Delta\beta\Delta\overa_{\alpha} +
\begin{pmatrix}
\left\vert\Delta\overa^{(1)} \star \Delta\overa^{(2)}\right\vert_{\alpha}\\
0\\
0\\
\left\vert \Delta\beta\Delta\overa_{\alpha}^{(3)}\right\vert
\end{pmatrix} \quad && \vert\alpha\vert\geq 2.
\end{aligned}
\right.
\end{equation*}

Since $\left( \overa(s)\right)_{\alpha}=0$ for all $\vert\alpha\vert\geq N$ and $F(\beta,\cdot)$ is quadratic in $a$, we have that  $F_{\alpha}(\beta_s,\overa(s))$ vanishes as soon as $\vert\alpha\vert\geq 2N-1$. Therefore, we define $\tilde F$ component-wise by
\begin{equation*}
\tilde F_{\alpha} = \left\{
\begin{aligned}
& \left\vert F(\beta_0,\overa(0))\right\vert_{\alpha} + \left\vert D_aF(\beta_0,\overa(0))\Delta\overa + D_{\beta}F(\beta_0,\overa(0))\Delta\beta\right\vert_{\alpha} + G_{\alpha} \quad && \vert\alpha\vert<2N-1 , \\
& 0 \quad && \vert\alpha\vert\geq 2N-1 ,
\end{aligned}
\right.
\end{equation*}
and then set 
\begin{equation*}
Y^{(j)}=\left\Vert \left(\vert A\vert \tilde F\right)^{(j)} \right\Vert_{1,\nu},
\end{equation*}
so that
\begin{equation*}
\left\| \left(T(\beta_s,\overa(s))-\overa(s)\right)^{(j)} \right\|_{1,\nu} \leq Y^{(j)} \qquad \text{for } j=1,\ldots,4, ~s\in[0,1].
\end{equation*}

\subsection{The bound \boldmath$Z$\unboldmath} \label{Zmanifold}

In this section we derive the bound $Z$ defined in \eqref{eq:Z_Bounds}.
Let $b,c \in B_r(0)$. We split $D_aT(\beta_s,\overa(s) +b)c$ in three terms which will be easier to bound separately. For each $j=1,\ldots,4$,
\begin{align*}
\left\Vert \left(D_aT(\beta_s,\overa(s) +b)c\right)^{(j)}\right\Vert_{1,\nu} &= \left\Vert \left(\left(I -AD_aF(\beta_s,\overa(s) + b)\right)c\right)^{(j)}\right\Vert_{1,\nu} \\
&\leq \left\Vert \left(\left(I-AA^{\dag}\right)c\right)^{(j)}\right\Vert_{1,\nu} + \left\Vert \left(A\left(D_aF(\beta_s,\overa(s) + b)- A^{\dag}\right)c\right)^{(j)}\right\Vert_{1,\nu} \\
&\leq \left\Vert \left(\left(I-AA^{\dag}\right)c\right)^{(j)}\right\Vert_{1,\nu} + \left\Vert \left(A\left(D_aF(\beta_s,\overa(s))- A^{\dag}\right)c\right)^{(j)}\right\Vert_{1,\nu} \\
& \hspace*{6cm} + \left\Vert \left(AD^2_{aa}F(\beta_s,\overa(s))(b,c)\right)^{(j)}\right\Vert_{1,\nu} \\
&\leq Z_0^{(j)} r +Z_1^{(j)} r + Z_2^{(j)} r^2.
\end{align*}
The bounds $Z_i \bydef \left(Z_i^{(1)},Z_i^{(2)},Z_i^{(3)},Z_i^{(4)} \right) \in \R^4$ ($i=0,1,2$) are given in the following subsections.

\subsubsection{The bound \boldmath$Z_0$\unboldmath}
\label{sec:Z0}

From the  definitions of $A$ and $A^{\dag}$ we get
\begin{equation*}
I-AA^{\dag}=
\begin{pmatrix}
I_{\frac{4N(N+1)}{2}}-JD_aF^{[N]}(\beta_0,\overa(0)) & & 0 & \\
 & & & \\
0 & & 0 & \\
 & & & \ddots\\
\end{pmatrix}.
\end{equation*}
The finite matrix $B=I_{2N(N+1)}-JD_aF^{[N]}(\beta_0,\overa(0))$ can be computed using interval arithmetic. To obtain the bound $Z_0$ we only need to compute the operator norm of $B$ (as acting on $(\ell^1_\nu)^4$). 
This is the content of the following lemma.
\begin{lemma}
\label{lem:operator_norm}
Let $h=\left(h_{\alpha}\right)_{\alpha\in\mathbb{N}^2}\in\ell^1_{\nu}$ (with $h_{\alpha}\in\mathbb{C}$ for all $\alpha$) and $\Gamma$ a linear operator acting on $\ell^1_{\nu}$. Then
\begin{equation*}
\sup\limits_{\left\Vert h\right\Vert_{1,\nu} =1}\left\Vert \Gamma h\right\Vert_{1,\nu} =\sup\limits_{\alpha\in\mathbb{N}^2} \frac{1}{\nu^{\vert\alpha\vert}} \sum_{\alpha' \in\mathbb{N}^2} |\Gamma_{\alpha',\alpha}|\nu^{\vert\alpha'\vert}.
\end{equation*} 
In particular, if $\Gamma$ consists in a finite block $\Gamma^{[N]}$ of size $N(N+1)/2\times N(N+1)/2$ and a diagonal tail $\left(\gamma_{\alpha}\right)_{\vert\alpha\vert\geq N}$
\begin{equation*}
\Gamma=
\begin{pmatrix}
\Gamma^{[N]} & & 0 & \\
 & \gamma_{N,0} & & \\
 0 & & \gamma_{N-1,1} & \\
 & & & \ddots \\
\end{pmatrix},
\end{equation*}
then 
\begin{equation*}
\sup\limits_{\left\Vert h\right\Vert_{1,\nu} =1}\left\Vert  \Gamma h\right\Vert_{1,\nu} =\max\left(\max\limits_{\vert\alpha\vert < N} \frac{1}{\nu^{\vert\alpha\vert}} \sum_{\vert\alpha' \vert < N} |\Gamma _{\alpha',\alpha}|\nu^{\vert\alpha'\vert}, \sup\limits_{\vert\alpha\vert \geq N} \vert \gamma_{\alpha}\vert\right).
\end{equation*} 
\end{lemma}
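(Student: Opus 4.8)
The plan is to compute the operator norm directly from the definition, exploiting the very simple structure of the $\ell^1_\nu$ norm with respect to the standard basis. First I would introduce the weighted basis vectors $e_\alpha \bydef \nu^{-|\alpha|} \delta_\alpha$ (where $\delta_\alpha$ is the sequence that is $1$ in position $\alpha$ and $0$ elsewhere), so that $\|e_\alpha\|_{1,\nu}=1$ for every $\alpha$. For an arbitrary $h\in\ell^1_\nu$ we can write $h=\sum_\alpha h_\alpha \nu^{|\alpha|} e_\alpha$, and then $\|h\|_{1,\nu}=\sum_\alpha |h_\alpha|\nu^{|\alpha|}$ is exactly the $\ell^1$ norm of the coefficient sequence in the $e_\alpha$ expansion. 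This recasts $(\ell^1_\nu, \|\cdot\|_{1,\nu})$ isometrically as a standard weighted $\ell^1$ space.

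The upper bound is a routine triangle-inequality estimate: for $\|h\|_{1,\nu}=1$, writing $h=\sum_\alpha c_\alpha e_\alpha$ with $\sum_\alpha |c_\alpha| = 1$, linearity and subadditivity of the norm give $\|\Gamma h\|_{1,\nu}\le \sum_\alpha |c_\alpha|\,\|\Gamma e_\alpha\|_{1,\nu}\le \sup_\alpha \|\Gamma e_\alpha\|_{1,\nu}$, and one computes $\|\Gamma e_\alpha\|_{1,\nu}=\nu^{-|\alpha|}\sum_{\alpha'}|\Gamma_{\alpha',\alpha}|\nu^{|\alpha'|}$ from the matrix representation $(\Gamma e_\alpha)_{\alpha'} = \nu^{-|\alpha|}\Gamma_{\alpha',\alpha}$. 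For the matching lower bound, one tests $\Gamma$ on each single basis vector $e_\alpha$: since $\|e_\alpha\|_{1,\nu}=1$, the supremum defining the operator norm is at least $\|\Gamma e_\alpha\|_{1,\nu}$ for every $\alpha$, hence at least the supremum over $\alpha$ of these quantities. Combining the two inequalities gives the claimed identity. (One should note the mild subtlety that the supremum over $\alpha$ may not be attained when $\Gamma$ has an infinite tail, but the argument only needs each $e_\alpha$ individually, so this causes no trouble; boundedness of $\Gamma$ is implicitly assumed so both sides may a priori be $+\infty$, and the identity still holds.)

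For the second (in-particular) statement, I would simply specialize. When $\Gamma$ is block-diagonal with a finite block $\Gamma^{[N]}$ acting on the coordinates $|\alpha|<N$ and a scalar diagonal tail $\gamma_\alpha$ for $|\alpha|\ge N$, the column sum $\nu^{-|\alpha|}\sum_{\alpha'}|\Gamma_{\alpha',\alpha}|\nu^{|\alpha'|}$ splits: for $|\alpha|<N$ only the entries $\Gamma_{\alpha',\alpha}$ with $|\alpha'|<N$ contribute, giving the finite-block term $\max_{|\alpha|<N}\nu^{-|\alpha|}\sum_{|\alpha'|<N}|\Gamma_{\alpha',\alpha}|\nu^{|\alpha'|}$; for $|\alpha|\ge N$ the only nonzero entry is $\Gamma_{\alpha,\alpha}=\gamma_\alpha$, so the column sum is just $|\gamma_\alpha|$, giving the tail term $\sup_{|\alpha|\ge N}|\gamma_\alpha|$. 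Taking the max of the two recovers the stated formula. I do not expect any real obstacle here; the only thing to be slightly careful about is keeping the index bookkeeping straight (the matrix entries $\Gamma_{\alpha',\alpha}$ are themselves scalars here, in contrast to the $4\times4$-block setting of Remark~\ref{r:Aijalpha}) and making explicit that testing on the basis vectors $e_\alpha$ is enough to get the lower bound even in the presence of an infinite tail.
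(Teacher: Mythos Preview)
Your argument is correct and is the standard computation of the operator norm on a weighted $\ell^1$ space via weighted column sums, with the lower bound obtained by testing on the normalized basis vectors $e_\alpha=\nu^{-|\alpha|}\delta_\alpha$. The paper states this lemma without proof, so there is nothing to compare against; your write-up fills the gap cleanly, and the specialization to the block-plus-diagonal structure is handled correctly.
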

Hence, we define
\begin{equation}\label{e:defK}
K^{(i,j)}(B) \bydef  \max_{0 \leq \vert \alpha \vert <N} \frac{1}{\nu^{\vert\alpha\vert}} \sum_{\vert\alpha'\vert<N} |B_{\alpha',\alpha}^{(i,j)}|\nu^{\vert\alpha'\vert} ,
\end{equation}
with the notation $B_{\alpha',\alpha}^{(i,j)}$ introduced in Remark~\ref{r:Aijalpha},
and set
\begin{equation*}
	Z_0^{(i)}=\sum_{j=1}^4 K^{(i,j)}(B),
\end{equation*}
to obtain 
\begin{equation}\label{Z0manifold}
\left\Vert \left(\left(I-AA^{\dag}\right)c\right)^{(j)}\right\Vert_{1,\nu} \leq Z^{(j)}_0 r, \quad \text{for } j=1,\ldots,4.
\end{equation}

\subsubsection{The bound \boldmath$Z_1$\unboldmath}

This term is the most involved one to bound tightly, so again we split it into several parts that we bound separately. For each $j=1,\ldots,4$,
\begin{align*}
\left\Vert\Big( A \left(D_aF(\beta_s,\overa(s))-A^{\dag}\right)c\Big)^{(j)}\right\Vert_{1,\nu} \leq &\left\Vert \Big(\vert A\vert\left\vert \left(D_aF(\beta_0,\overa(s))-A^{\dag}\right) c\right\vert\Big)^{(j)}\right\Vert_{1,\nu}\\
&\qquad  + \left\Vert \Big(\vert A\vert \max\limits_{\eta\in[0,1]}\vert\Delta\beta\vert \left\vert D^2_{\beta a}F(\beta_{\eta},\overa(s))c\right\vert\Big)^{(j)}\right\Vert_{1,\nu} \\
 \leq &\left\Vert \Big(\vert A\vert\left\vert \left(D_aF(\beta_0,\overa(0))-A^{\dag}\right) c\right\vert\Big)^{(j)}\right\Vert_{1,\nu} \\
 &\qquad + \left\Vert \Big(\vert A\vert\left\vert D^2_{aa}F(\beta_0,\overa(0)) (\Delta\overa,c)\right\vert\Big)^{(j)}\right\Vert_{1,\nu} \\
 &\qquad\qquad + \left\Vert \Big(\vert A\vert \max\limits_{\eta\in[0,1]}\vert\Delta\beta\vert \left\vert D^2_{\beta a}F(\beta_{\eta},\overa(s))c\right\vert\Big)^{(j)}\right\Vert_{1,\nu}.
\end{align*}
Let us focus first on the first term. Since 
\begin{equation*}
D_aF^{[N]}(\beta_0,\overa(0))c^{[N]}=\left(D_aF(\beta_0,\overa(0))c\right)^{[N]},
\end{equation*}
we get that
\begin{equation*}
\left(\left(D_aF(\beta_0,\overa(0))- A^{\dag}\right)c\right)^{[N]}=0.
\end{equation*}
For the tail $\vert\alpha\vert\geq N$ we find
\begin{equation*}
d_{\alpha} \bydef
\left(\left(D_aF(\beta_0,\overa(0))- A^{\dag}\right)c\right)_{\alpha} = 
\begin{pmatrix}
c^{(2)}_{\alpha} + (\overa(0)^{(1)}\star c^{(2)})_{\alpha} + (\overa(0)^{(2)}\star c^{(1)})_{\alpha} \\
c^{(3)}_{\alpha}\\
c^{(4)}_{\alpha}\\
-c^{(1)}_{\alpha} -\beta_0 c^{(3)}_{\alpha}
\end{pmatrix} ,
\end{equation*}
which we estimate by
\begin{align*}
\left\Vert d_{\alpha}^{(1)} \right\Vert_{1,\nu} &\leq  \left(1 + \left\Vert\overa(0)^{(1)} \right\Vert_{1,\nu} + \left\Vert\overa(0)^{(2)} \right\Vert_{1,\nu} \right)r
\\
\left\Vert d_{\alpha}^{(2)} \right\Vert_{1,\nu} &\le  r
\\
\left\Vert d_{\alpha}^{(3)} \right\Vert_{1,\nu} &\le  r
\\
\left\Vert d_{\alpha}^{(4)} \right\Vert_{1,\nu} &\le  \left(1+\beta_0\right)r.
\end{align*}
Now we use Lemma~\ref{lem:operator_norm} again and from the fact that $\vert (n-k)\lambda(\beta_0) + k\lambda^*(\beta_0)\vert \geq n\vert\Re(\lambda(\beta_0))\vert = n \frac{\sqrt{2-\beta_0}}{2}$ we infer that
\begin{equation*}
\left\Vert \left(Ad\right)^{(j)} \right\Vert_{1,\nu} \leq \frac{2}{N\sqrt{2-\beta_0}} \left\Vert d^{(j)} \right\Vert_{1,\nu},
\end{equation*}
and we are done with the first term. For the second term
\begin{equation*}
D^2_{aa}F_{\alpha}(\beta_0,\overa(0))(\Delta\overa,c) =
\begin{pmatrix}
(\Delta\overa^{(1)}\star c^{(2)})_{\alpha} + (\Delta\overa^{(2)}\star c^{(1)})_{\alpha} \\
0\\
0\\
0
\end{pmatrix}. 
\end{equation*}
Again we use Lemma~\ref{lem:operator_norm} to obtain
\begin{equation*}
\left\Vert \left( \left| A \right| \left| D^2_{aa} F_0(\overa(0))(\Delta\overa,c)\right|\right)^{(j)}  \right\Vert_{1,\nu} \leq \left\{
\begin{aligned}
& \max\left(K^{(1,1)}(J),\tfrac{2}{N\sqrt{2-\beta_0}}\right)\left(\Vert\Delta\overa^{(1)} \Vert_{1,\nu} + \Vert\Delta\overa^{(2)} \Vert_{1,\nu}\right)r  &\quad j=1,\\
& K^{(j,1)}(J)\left(\Vert\Delta\overa^{(1)} \Vert_{1,\nu} + \Vert\Delta\overa^{(2)} \Vert_{1,\nu}\right)r & \quad  j=2,3,4,
\end{aligned}
\right.
\end{equation*}
where we recall that $J$ is the block of $A$ corresponding to the floating point data, see~\eqref{e:defA}, and $K^{(i,j)}$ is defined by~\eqref{e:defK}.
Finally, computing the derivative of $\lambda$ with respect to $\beta$, we get that
\begin{equation*}
\max\limits_{\eta\in[0,1]}\vert\Delta\beta\vert \left\vert D^2_{\beta a}F(\beta_{\eta},\overa(s))c\right\vert_{\alpha} \leq \left\{
\begin{aligned}
& 0 \quad & \vert\alpha\vert<2, \\
& \Delta\beta\left(\frac{\vert\alpha\vert}{2\sqrt{(2-\beta_1)(2+\beta_1)}}\left\vert c_{\alpha}\right\vert +
\begin{pmatrix}
0\\
0\\
0\\
\left\vert c_{\alpha}^{(3)}\right\vert
\end{pmatrix}\right) \quad & \vert\alpha\vert\geq 2.
\end{aligned}
\right.
\end{equation*}
Now we need the following lemma, which is a slightly modified version of Lemma~\ref{lem:operator_norm}.
\begin{lemma}\label{lem:modifiedversion}
Let $c=\left(c_{\alpha}\right)_{\alpha\in\mathbb{N}^2}\in\left(\ell^1_{\nu}\right)^4$. We denote by $e$ the vector such that for all $\alpha$, $e_{\alpha}=\vert\alpha\vert c_{\alpha}$. Then for all $1\leq j\leq 4$
\begin{equation*}
\left\Vert \left(A e\right)^{(i)} \right\Vert_{1,\nu} \leq \max\left(\sum_{j=1}^4 \tilde K^{(i,j)}(J),\frac{2}{\sqrt{2-\beta_0}}\right) r
\end{equation*}
where
\begin{equation*}
\tilde K^{(i,j)}(J) \bydef  \max\limits_{\vert\alpha\vert<N} \left(\frac{\vert\alpha\vert}{\nu^{\vert\alpha\vert}} \sum_{\vert\alpha'\vert<N}  \left\vert J^{(i,j)}_{\alpha',\alpha}\right\vert \nu^{\vert\alpha'\vert}\right).
\end{equation*}
\end{lemma}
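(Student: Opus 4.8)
The goal is to prove Lemma~\ref{lem:modifiedversion}, which upgrades Lemma~\ref{lem:operator_norm} to handle vectors of the form $e_\alpha = |\alpha| c_\alpha$ acted on by the operator $A$ from \eqref{e:defA}, given that $c \in B_r(0)$, i.e., $\|c\|_X \le r$.

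\medskip

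\noindent\textbf{Proof plan.} The plan is to split $A$ into its finite block $J$ and its diagonal tail $(M_k)_{k \ge N}$ exactly as in the proof of Lemma~\ref{lem:operator_norm}, and to estimate $\|(Ae)^{(i)}\|_{1,\nu}$ separately on the finite part $|\alpha| < N$ and the tail $|\alpha| \ge N$, then take the max. First I would record that $\|e^{(j)}\|_{1,\nu} = \sum_{|\alpha|\ge 0} |\alpha|\,|c^{(j)}_\alpha|\,\nu^{|\alpha|}$; on the finite part this is $\le \max_{|\alpha|<N}\bigl(|\alpha|/\nu^{|\alpha|}\bigr) \cdot (\text{something})$ — but actually the cleaner route, mirroring \eqref{e:defK}, is to absorb the factor $|\alpha|$ into the kernel of $J$. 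For the finite block, $(Ae)^{(i)}_{\alpha'} = \sum_{|\alpha|<N} J^{(i,j)}_{\alpha',\alpha}\,|\alpha|\,c^{(j)}_\alpha$ (summing over $j$), so by the same computation as in Lemma~\ref{lem:operator_norm} one gets $\|(Ae)^{(i)}\|_{1,\nu}$ on the finite part bounded by $\sum_{j=1}^4 \tilde K^{(i,j)}(J) \max_j \|c^{(j)}\|_{1,\nu} \le \bigl(\sum_{j=1}^4 \tilde K^{(i,j)}(J)\bigr) r$, which is exactly where the definition of $\tilde K^{(i,j)}(J)$ with the extra $|\alpha|$ factor comes from.

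\medskip

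For the tail $|\alpha| \ge N$, the operator acts diagonally: $(Ae)_\alpha = M_{|\alpha|}(|\alpha| c_\alpha)$ where $M_k = \tilde M_k^{-1}$ has diagonal entries of modulus $1/|(k-\ell)\lambda(\beta_0) + \ell \lambda^*(\beta_0)|$. Using the same lower bound invoked in Section~\ref{Zmanifold}, namely $|(n-k)\lambda(\beta_0) + k\lambda^*(\beta_0)| \ge n|\Re \lambda(\beta_0)| = n\sqrt{2-\beta_0}/2$, we get that the diagonal entry of $M_{|\alpha|}$ has modulus $\le 2/(|\alpha|\sqrt{2-\beta_0})$. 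Hence $|(Ae)_\alpha| \le \frac{2}{|\alpha|\sqrt{2-\beta_0}} \cdot |\alpha|\,|c_\alpha| = \frac{2}{\sqrt{2-\beta_0}}|c_\alpha|$ — the $|\alpha|$ factors cancel exactly, which is the whole point. Summing against $\nu^{|\alpha|}$ over the tail and using $\|c^{(i)}\|_{1,\nu} \le r$ gives the tail bound $\frac{2}{\sqrt{2-\beta_0}} r$. Taking the maximum of the two contributions yields the claimed inequality.

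\medskip

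\noindent\textbf{Main obstacle.} There is no serious obstacle here — this is a routine variant of Lemma~\ref{lem:operator_norm}, and the only thing that needs care is bookkeeping: making sure the factor $|\alpha|$ is correctly placed inside $\tilde K^{(i,j)}(J)$ on the finite part (it multiplies the column index $\alpha$, not the row index $\alpha'$), and verifying that on the tail the $|\alpha|$ in $e_\alpha$ precisely cancels the $1/|\alpha|$ coming from the eigenvalue growth lower bound, so that no $N$-dependent blow-up appears in the tail constant (indeed the tail bound $2/\sqrt{2-\beta_0}$ is $N$-independent, unlike the $2/(N\sqrt{2-\beta_0})$ appearing in the $Z_1$ estimates, because the derivative factor $|\alpha|$ has been consumed). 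One should also note that the sum over $j$ from $1$ to $4$ in $\sum_{j=1}^4 \tilde K^{(i,j)}(J)$ accounts for the off-diagonal coupling between the four components of $a$ in the finite block of $D_aF$, exactly as in the definition of $Z_0^{(i)}$.
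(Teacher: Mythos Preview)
Your overall strategy is exactly the right one, and the analysis of the finite block (absorbing the factor $|\alpha|$ into the column weights of $J$, producing the quantities $\tilde K^{(i,j)}(J)$) and of the diagonal tail (where the growth $|\alpha|$ in $e_\alpha$ cancels the decay $1/|\alpha|$ in $M_{|\alpha|}$) is carried out correctly. The gap is in the very last sentence, ``Taking the maximum of the two contributions yields the claimed inequality.'' This step does not hold as written.

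The norm $\|(Ae)^{(i)}\|_{1,\nu}$ is a \emph{sum} over all output indices $\alpha'$, so the finite-block contribution (over $|\alpha'|<N$) and the tail contribution (over $|\alpha'|\ge N$) must be \emph{added}, not maximized. You have bounded the finite contribution by $\bigl(\sum_{j}\tilde K^{(i,j)}(J)\bigr)r$ and the tail contribution by $\tfrac{2}{\sqrt{2-\beta_0}}\,r$; their sum is $\bigl(\sum_{j}\tilde K^{(i,j)}(J)+\tfrac{2}{\sqrt{2-\beta_0}}\bigr)r$, not the maximum. To squeeze a max out of this one must use that the finite block of $A$ sees only the finite part of the input and the diagonal tail sees only the tail part; writing $P_j=\sum_{|\alpha|<N}|c^{(j)}_\alpha|\nu^{|\alpha|}$ and $Q_j=\sum_{|\alpha|\ge N}|c^{(j)}_\alpha|\nu^{|\alpha|}$ (so $P_j+Q_j\le r$), the two pieces are controlled by $\sum_j \tilde K^{(i,j)}(J)\,P_j$ and $\tfrac{2}{\sqrt{2-\beta_0}}\,Q_i$ respectively. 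Because the tail is diagonal in the component index, only $Q_i$ appears, and one obtains the sharp bound
\[
\|(Ae)^{(i)}\|_{1,\nu}\ \le\ \Bigl(\max\!\bigl(\tilde K^{(i,i)}(J),\tfrac{2}{\sqrt{2-\beta_0}}\bigr)+\sum_{j\ne i}\tilde K^{(i,j)}(J)\Bigr)r,
\]
which in general is \emph{larger} than the stated $\max\bigl(\sum_j\tilde K^{(i,j)}(J),\,\tfrac{2}{\sqrt{2-\beta_0}}\bigr)r$ whenever $\tilde K^{(i,i)}(J)<\tfrac{2}{\sqrt{2-\beta_0}}$ and some off-diagonal $\tilde K^{(i,j)}(J)$ is nonzero. (A two-component counterexample: put all of $c^{(j)}$, $j\ne i$, in the finite modes and all of $c^{(i)}$ in the tail modes.) So the inequality as stated in the lemma is slightly too optimistic and cannot be recovered by the argument you sketch; the paper does not supply a proof here, and this appears to be a minor slip in the statement itself. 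In the intended application $J$ is a numerical inverse and the off-diagonal $\tilde K^{(i,j)}(J)$ are of machine-precision size, so the discrepancy is numerically irrelevant, but at the level of a proof you should either replace the max by the corrected constant above, or add a sentence explaining why in your setting $\sum_{j\ne i}\tilde K^{(i,j)}(J)$ is negligible.
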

Using Lemmas~\ref{lem:operator_norm} and~\ref{lem:modifiedversion}  we infer that
\begin{equation*}
\left\Vert \Big(\vert A\vert \max\limits_{\eta\in[0,1]}\vert\Delta\beta\vert \left\vert D^2_{\beta a}F(\beta_{\eta},\overa(s))c\right\vert\Big)^{(j)}\right\Vert_{1,\nu} \leq \left\{
\begin{aligned}
& \Delta\beta\left(\overline{K}_j + K^{(j,4)}(J)\right)r \quad & j=1,2,3 \\
& \Delta\beta\left(\overline{K}_4 + \max\left(K^{(4,4)}(J),\frac{2}{N\sqrt{2-\beta_0}}\right) \right)r \quad & j=4,
\end{aligned}
\right.
\end{equation*}
where 
\begin{equation*}
\overline{K}_i \bydef \frac{\max\left(\sum_{j=1}^4 \tilde K^{(i,j)}(J),\frac{2}{\sqrt{2-\beta_0}}\right)}{2\sqrt{(2-\beta_1)(2+\beta_1)}}.
\end{equation*}
Finally, putting everything together, we define
\begin{align*}
Z^{(1)}_1 & =  \frac{2\left(1 + \left\Vert\overa(0)^{(1)} \right\Vert_{1,\nu} + \left\Vert\overa(0)^{(2)} \right\Vert_{1,\nu}\right)}{N\sqrt{2-\beta_0}} + \max\left(K^{(1,1)}(J),\frac{2}{N\sqrt{2-\beta_0}}\right)\left(\left\Vert\Delta\overa^{(1)} \right\Vert_{1,\nu} + \left\Vert\Delta\overa^{(2)} \right\Vert_{1,\nu}\right) \\
& \hspace{11.8cm} + \Delta\beta\left(\overline{K}_1 + K^{(1,4)}(J)\right) ,
\\
Z^{(2)}_1  &= \left(\frac{2}{N\sqrt{2-\beta_0}} + K^{(2,1)}(J)\left(\left\Vert\Delta\overa^{(1)} \right\Vert_{1,\nu} + \left\Vert\Delta\overa^{(2)} \right\Vert_{1,\nu}\right) + \Delta\beta\left(\overline{K}_2 + K^{(2,4)}(J)\right) \right),
\\
Z^{(3)}_1 &= \left(\frac{2}{N\sqrt{2-\beta_0}} + K^{(3,1)}(J)\left(\left\Vert\Delta\overa^{(1)} \right\Vert_{1,\nu} + \left\Vert\Delta\overa^{(2)} \right\Vert_{1,\nu}\right) + \Delta\beta\left(\overline{K}_3 + K^{(3,4)}(J)\right) \right),
\\
Z^{(4)}_1 &= \left(\frac{2(1+\beta_0)}{N\sqrt{2-\beta_0}} + K^{(4,1)}(J)\left(\left\Vert\Delta\overa^{(1)} \right\Vert_{1,\nu} + \left\Vert\Delta\overa^{(2)} \right\Vert_{1,\nu}\right)\right. + \left. \Delta\beta\left(\overline{K}_4 + \max\left(K^{(4,4)}(J),\frac{2}{N\sqrt{2-\beta_0}}\right) \right) \right),
\end{align*}
so that
\begin{equation*}
\left\Vert \left(A\left(D_aF(\beta_s,\overa(s))- A^{\dag}\right)c\right)^{(j)}\right\Vert_{1,\nu} \leq Z^{(j)}_1 r \qquad \text{for } j=1,\ldots,4, ~s\in[0,1].
\end{equation*}

\subsubsection{The bound \boldmath$Z_2$\unboldmath}

Since 
\begin{equation*}
D^2_{aa}F_{\alpha}(\beta_s,\overa(s))(b,c) = \left\{
\begin{aligned}
& 0 \quad & \vert\alpha\vert\leq 1, \\
& \begin{pmatrix}
(b^{(1)}\star c^{(2)})_{\alpha} + (b^{(2)}\star c^{(1)})_{\alpha} \\
0\\
0\\
0
\end{pmatrix} \quad & \vert\alpha\vert\geq 2,
\end{aligned}
\right.
\end{equation*}
we directly use one more time Lemma~\ref{lem:operator_norm} and set
\begin{equation*}
Z^{(1)}_2 =2\max\left( K^{(1,1)}(J),\frac{2}{N\sqrt{2-\beta_0}}\right), \quad Z^{(2)}_2 = 2K^{(2,1)}(J), \quad 
Z^{(3)}_2 = 2K^{(3,1)}(J) \quad \text{and}\quad Z^{(4)}_2 = 2K^{(4,1)}(J),
\end{equation*}
so that
\begin{equation*}
\left\Vert \left(AD^2_{aa}F(\beta_s,\overa(s))(b,c)\right)^{(j)}\right\Vert_{1,\nu} \leq Z_2^{(j)}r^2  \qquad \text{for } j=1,2,3,4, ~s\in[0,1].
\end{equation*}

\subsection{Use of the uniform contraction principle and error bounds}
\label{sec:para_error}

Following~\eqref{eq:def_rad_poly_general}, we set
\begin{equation} \label{eq:radii_polynomials_manifold}
p^{(j)}(r) \bydef Y^{(j)}+\left(Z^{(j)}_0+Z^{(j)}_1-1\right)r+Z^{(j)}_2r^2, \quad \text{for } j=1,\ldots,4.
\end{equation}
If we find an $r>0$ such that $p^{(j)}(r)<0$ for all $j=1,\ldots,4$,  then according to Proposition~\ref{prop:Radii} we have validated the numerical approximation $\overa(s)$ of the local stable manifold for $\beta=\beta_s$, for every $s \in [0,1]$.
\begin{proposition}\label{prop:validation_manifold}
For every $s\in [0,1]$, let 
\begin{equation*}
\overline Q_{\beta_s}(\theta)=\displaystyle \sum_{|\alpha|=0}^{N-1} \overa_{\alpha}(s) \theta^{\alpha}
\end{equation*} 
be the approximate parameterization of the complex local stable manifold that we have computed (for $\beta=\beta_s$).
Assume that there exists an $r>0$ such that $p^{(j)}(r)<0$ for all $j=1,\ldots,4$. Then, for each $s\in[0,1]$, there exists a parameterization $Q_{\beta_s}$ of the complex local stable manifold (for $\beta=\beta_s$) of the form
\begin{equation*}
Q_{\beta_s}(\theta)=\displaystyle \sum_{|\alpha|=0}^{\infty}  a_{\alpha}(s) \theta^{\alpha},
\end{equation*}
which is well defined for all $\theta\in \mathbb{C}^2$ satisfying $\vert \theta\vert_{\infty} \leq \nu$. Let 
\begin{equation}\label{e:defherror}
	\hat{h}_{\beta_s}(\theta) \bydef Q_{\beta_s}(\theta)-\overline Q_{\beta_s}(\theta) ,
\end{equation}
then we have the error bound $\vert \hat{h}_{\beta_s}(\theta) \vert_{\infty} \leq r$
for all $\vert \theta\vert_{\infty} \leq \nu$.
These statements still hold true for the real (approximate and exact) local stable manifold, defined by 
\begin{align} 
\label{eq:P_beta_s}
P_{\beta_s}(\theta) & \bydef Q_{\beta_s}(\theta_1+ \rm{i} \theta_2,\theta_1- \rm{i} \theta_2)
\\ 
\label{eq:barP_beta_s}
\overline P_{\beta_s}(\theta) & \bydef \overline Q_{\beta_s}(\theta_1+ \rm{i} \theta_2,\theta_1- \rm{i} \theta_2)
\end{align}
for all $\theta\in \mathbb{R}^2$ satisfying $\vert \theta\vert_{2} \bydef \sqrt{\theta_1^2+\theta_2^2} \leq \nu$.
\end{proposition}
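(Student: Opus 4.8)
\section*{Proof proposal for Proposition~\ref{prop:validation_manifold}}

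This statement is essentially a repackaging of the estimates assembled in Sections~\ref{Zmanifold}--\ref{sec:para_error} into the abstract framework of Section~\ref{sec:radii_poly_approach}, so the plan is to verify that all hypotheses of that framework are met and then to translate its conclusion back into the language of parameterizations. Concretely, after the reparameterization $\beta\mapsto s$ the line of centers $\{\overa(s):s\in[0,1]\}$ is exactly the linear interpolation \eqref{eq:centers}, the operator is $T(\beta,a)=a-AF(\beta,a)$, and the quantities $Y^{(j)}$ and $Z^{(j)}(r)\bydef Z_0^{(j)}r+Z_1^{(j)}r+Z_2^{(j)}r^2$ satisfy \eqref{eq:Y_Bounds}--\eqref{eq:Z_Bounds} by the previous subsections, so that the radii polynomials \eqref{eq:radii_polynomials_manifold} are precisely those of Definition~\ref{PolRayon}. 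Thus, given $r>0$ with $p^{(j)}(r)<0$ for $j=1,\dots,4$, Proposition~\ref{prop:Radii} (equivalently Theorems~\ref{contr} and~\ref{th_local}) produces, for every $s\in[0,1]$, a unique $a(s)\in B_r(\overa(s))$ satisfying $T(\beta_s,a(s))=a(s)$.

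The next step is to promote this fixed point of $T(\beta_s,\cdot)$ to a zero of $F(\beta_s,\cdot)$, for which I need $A$ to be injective on $X$. Since $Y^{(j)},Z_1^{(j)},Z_2^{(j)}\geq 0$ and $r>0$, the inequality $p^{(j)}(r)<0$ forces $Z_0^{(j)}<1$ for every $j$; by Lemma~\ref{lem:operator_norm} applied block-wise, this means the finite block $B=I-JD_aF^{[N]}(\beta_0,\overa(0))$ of $I-AA^{\dagger}$ has operator norm strictly less than $1$ on $(\ell^1_{\nu})^4$, hence $JD_aF^{[N]}(\beta_0,\overa(0))$, and therefore the square matrix $J$, is invertible. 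Combined with the fact that each tail block $M_k=\tilde M_k^{-1}$ is invertible (the diagonal entries of $\tilde M_k$ are nonzero because $\Re\lambda(\beta_0)<0$), this gives injectivity of $A$, so $T(\beta_s,a(s))=a(s)$ is equivalent to $AF(\beta_s,a(s))=0$ and hence to $F(\beta_s,a(s))=0$.

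Now I read off the geometry. Because $a(s)\in(\ell^1_{\nu})^4$ we have $\sum_{|\alpha|\geq0}|a^{(j)}_\alpha(s)|\nu^{|\alpha|}<\infty$, so using $|\theta^\alpha|\leq|\theta|_\infty^{|\alpha|}\leq\nu^{|\alpha|}$ the series $Q_{\beta_s}(\theta)=\sum_{|\alpha|\geq0}a_\alpha(s)\theta^\alpha$ converges absolutely and defines a map on $\{|\theta|_\infty\leq\nu\}$ (analytic in the interior). The equations $F(\beta_s,a(s))=0$ are, by the very construction of $F$ in Section~\ref{s:manintro}, exactly $Q_{\beta_s}(0)=0$, $DQ_{\beta_s}(0)=\begin{pmatrix}V(\beta_s) & V^*(\beta_s)\end{pmatrix}$ together with the invariance equation \eqref{eq:invariance_equation}, so by the parameterization method $Q_{\beta_s}$ is a chart of the local stable manifold; here one uses the non-resonance of $\lambda(\beta_s),\lambda^*(\beta_s)$, valid on $[\beta_0,\beta_1]\subset(0,2)$ because $\Re\bigl(\alpha_1\lambda(\beta_s)+\alpha_2\lambda^*(\beta_s)\bigr)=|\alpha|\Re\lambda(\beta_s)<\Re\lambda(\beta_s)$ for $|\alpha|\geq2$, while the remaining two eigenvalues of $D\Psi_{\beta_s}(0)$ have positive real part. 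For the error bound, write $\hat h_{\beta_s}(\theta)=\sum_{|\alpha|\geq0}\bigl(a_\alpha(s)-\overa_\alpha(s)\bigr)\theta^\alpha$ (note $\overa_\alpha(s)=0$ for $|\alpha|\geq N$, so $\overline Q_{\beta_s}$ is precisely the $|\alpha|<N$ truncation of this series); estimating term by term,
\[
|\hat h_{\beta_s}(\theta)|_\infty \;\leq\; \max_{j=1,\dots,4}\sum_{|\alpha|\geq0}\bigl|a^{(j)}_\alpha(s)-\overa^{(j)}_\alpha(s)\bigr|\nu^{|\alpha|}\;=\;\|a(s)-\overa(s)\|_X\;\leq\; r
\]
for all $|\theta|_\infty\leq\nu$.

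Finally, for the real parameterization, observe that the conjugation-and-swap involution $\iota:(a_\alpha)_{|\alpha|\geq0}\mapsto(a^*_{(\alpha_2,\alpha_1)})_{|\alpha|\geq0}$ is an isometry of $X$, fixes the centers $\overa(s)$ (the numerical solutions being computed so as to respect this symmetry, which is inherited by the real-coefficient interpolation), and maps zeros of $F(\beta_s,\cdot)$ to zeros; hence $\iota a(s)$ also lies in $B_r(\overa(s))$ and solves $F(\beta_s,\cdot)=0$, so by uniqueness $\iota a(s)=a(s)$, which is exactly what makes $P_{\beta_s}(\theta)=Q_{\beta_s}(\theta_1+\mathrm{i}\theta_2,\theta_1-\mathrm{i}\theta_2)$ real-valued, and it parametrizes the real local stable manifold by the same argument as in the complex case. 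Well-definedness and the error bound for $P_{\beta_s}$ and $\overline P_{\beta_s}$ transfer verbatim because $|\theta_1\pm\mathrm{i}\theta_2|=\sqrt{\theta_1^2+\theta_2^2}=|\theta|_2$, so $|\theta|_2\leq\nu$ places the complex argument in the polydisc $\{|\cdot|_\infty\leq\nu\}$. The only genuinely delicate points are the injectivity of $A$ (without it a fixed point of $T$ need not be a zero of $F$) and the invocation of the parameterization method to guarantee that the convergent power series we produce is really a chart of the stable manifold rather than a spurious solution of the invariance equation; both are handled as above, the latter resting on the non-resonance observation.
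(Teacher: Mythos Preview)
Your proof is correct and follows essentially the same approach as the paper: invoke Proposition~\ref{prop:Radii} to obtain the fixed point, deduce injectivity of $A$ from $Z_0^{(j)}<1$ (implied by $p^{(j)}(r)<0$) so that the fixed point is a zero of $F$, and then read off convergence of $Q_{\beta_s}$ and the error bound from $\|a(s)-\overa(s)\|_X\leq r$. You supply more detail than the paper on two points it treats as implicit---the non-resonance check underlying the parameterization method and the conjugation-and-swap symmetry argument for realness of $P_{\beta_s}$---but these are sound elaborations rather than a different route.
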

\begin{proof}
Proposition~\ref{prop:Radii} yields that, for each $s\in[0,1]$, there exists a
unique fixed point $a(s)$ of $T(\beta_s,\cdot)$ in the ball of radius $r$ around $\overa(s)$. 
The operator $A$ is injective since its non-diagonal part $J$ is invertible. The latter follows from the fact that, see~\eqref{Z0manifold},
\[
  \bigl\| I_{2N(N+1)}-J D_a F^{[N]}(\beta_0,\overa(0)) \bigr\|_{B(X^{[N]},X^{[N]})} \leq \max_{1\leq j\leq 4} Z_0^{(j)} < 1 ,
\] 
where the final inequality is implied by $p_j(r)<0$. Here the operator norm on $X^{[N]}\cong\R^{2N(N+1)}$ is induced by the one on $X=(\ell^1_\nu)^4$. Hence the fixed point $a(s)$ of $T$ solves $F(\beta_s,a(s))=0$.
By construction $Q_{\beta_s}$ is a parameterization of the local stable manifold defined for $\vert \theta\vert_{\infty} \leq \nu$, and for such $\theta$,
\begin{align*}
\left\vert Q_{\beta_s}(\theta)-\overline Q_{\beta_s}(\theta) \right\vert_{\infty} &= \left\vert \sum_{|\alpha|=0}^{\infty}  \left(a_{\alpha}(s) -\overa_{\alpha}(s)\right) \theta^{\alpha}\right\vert_{\infty}  \\
&= \max\limits_{j=1,\ldots,4}\left\vert \sum_{|\alpha|=0}^{\infty}  \left(a_{\alpha}^{(j)}(s) -\overa_{\alpha}^{(j)}(s)\right) \theta^{\alpha}\right\vert \\
&\leq \max\limits_{j=1,\ldots,4}\sum_{|\alpha|=0}^{\infty}  \left\vert a_{\alpha}^{(j)}(s) -\overa_{\alpha}^{(j)}(s)\right\vert \nu^{\vert\alpha\vert} \\
&= \max\limits_{j=1,\ldots,4}\left\Vert a^{(j)}(s) - \overa^{(j)}(s)\right\Vert_{1,\nu} \\
&\leq r. \qedhere
\end{align*}
\end{proof}
In the following section we use these approximations to rigorously prove the existence of homoclinic orbits for every parameter $\beta$ in $[0.5,1.9]$. To do so, we will also need control on the derivative of the parameterization $P_{\beta_s}$, which is provided by the theory of analytic functions. Define
\begin{equation}\label{hbeta}
h_{\beta_s}(\theta) \bydef P_{\beta_s}(\theta)-\overline P_{\beta_s}(\theta), \quad \theta\in\mathbb{R}^2, \ \vert\theta\vert_2\leq \nu.
\end{equation}
For all $s\in[0,1]$, the function $\hat{h}_{\beta_s}$, defined by~\eqref{e:defherror}, is analytic. Since $h_{\beta_s}(\theta)=\hat h_{\beta_s} (\theta_1+ \rm{i} \theta_2,\theta_1- \rm{i} \theta_2)$, we can control the derivative of $h_{\beta_s}$ (on a smaller domain) by a bound on $\hat h_{\beta_s}$. This is the content of the following lemma, of which the  proof can be found in \cite{MirelessMischaikow}.
\begin{lemma}\label{bornederive}
Assume that $\hat h:D_{\infty,\nu}(\mathbb{C}^2)\subset\mathbb{C}^2 \to \mathbb{C}^4$ is analytic, where
\begin{equation*}
D_{\infty,\nu}(\mathbb{C}^2) \bydef \left\{ \theta \in \mathbb{C}^2,\ \vert\theta\vert_{\infty}\leq \nu\right\},
\end{equation*}
and $\delta>0$ is such that 
\begin{equation} \label{eq:error_bounds_manifold}
\max_{\theta\in D_{\infty,\nu}(\mathbb{C}^2)} \left\vert  \hat h(\theta) \right\vert_{\infty} \leq \delta.
\end{equation}
Consider $h:D_{2,\nu}(\mathbb{R}^2)\subset\mathbb{R}^2 \to \mathbb{R}^4$ defined by $h(\theta)=\hat h (\theta_1+ \rm{i} \theta_2,\theta_1-  \rm{i} \theta_2)$, where
\begin{equation*}
D_{2,\nu}(\mathbb{R}^2)\bydef \left\{ \theta \in \mathbb{R}^2,\ \vert\theta\vert_{2}\leq \nu\right\}.
\end{equation*}
Then for any $\rho<\nu$ we have
\begin{equation} \label{eq:cauchy_bounds_derivatives}
\max\limits_{\theta\in D_{2,\rho}(\mathbb{R}^2)} \left\vert \frac{\partial h^{(j)}}{\partial \theta_i}(\theta) \right\vert_{\infty} \leq \frac{4\pi}{\nu \ln(\frac{\nu}{\rho})}\delta  \qquad \text{for } j=1,\ldots,4, ~i=1,2.
\end{equation}
\end{lemma}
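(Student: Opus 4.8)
The plan is to recognize \eqref{eq:cauchy_bounds_derivatives} as a Cauchy estimate for analytic functions and to reduce the two-real-variable bound on $h$ to a one-complex-variable Cauchy integral estimate. The starting observation is that $h=\hat h\circ L$, where $L(\theta)=(\theta_1+\mathrm{i}\theta_2,\theta_1-\mathrm{i}\theta_2)$ is complex-linear; hence $h$ extends to a holomorphic function of $\theta\in\mathbb{C}^2$ on $L^{-1}\bigl(D_{\infty,\nu}(\mathbb{C}^2)\bigr)$. The reason the $\ell^2$-disc $D_{2,\rho}(\mathbb{R}^2)$ is the natural domain on which to control the derivative is that $L$ is isometric in the relevant sense on real arguments: for $\theta\in\mathbb{R}^2$ one has $|\theta_1\pm\mathrm{i}\theta_2|=|\theta|_2$, and, more to the point, if $\omega\in\mathbb{R}^2$ is a unit vector then $|\omega_1\pm\mathrm{i}\omega_2|=1$, so a complex displacement $t\omega$ of a real base point $\theta^0$ moves each coordinate of $L(\theta^0+t\omega)$ by exactly $|t|$ in modulus.

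Concretely, fix $\theta^0\in D_{2,\rho}(\mathbb{R}^2)$, set $\rho_0=|\theta^0|_2\le\rho$, and for a real unit vector $\omega$ introduce the scalar holomorphic function $\psi_\omega(t)\bydef h(\theta^0+t\omega)=\hat h\bigl(L(\theta^0+t\omega)\bigr)$. By the displacement estimate above, $L(\theta^0+t\omega)$ lies in $D_{\infty,\nu}(\mathbb{C}^2)$ whenever $|t|\le\nu-\rho_0$, so $\psi_\omega$ is holomorphic on that disc and bounded there by $\delta$ componentwise in $\ell^\infty$ by~\eqref{eq:error_bounds_manifold}. Since $\psi_\omega'(0)=\nabla h(\theta^0)\cdot\omega$, the Cauchy integral formula $\psi_\omega'(0)=\frac{1}{2\pi\mathrm{i}}\oint_{|t|=r}t^{-2}\psi_\omega(t)\,dt$ over an admissible circle yields a bound on the directional derivative; choosing $\omega=e_1$ and $\omega=e_2$ then recovers the componentwise bounds on $\partial h^{(j)}/\partial\theta_1$ and $\partial h^{(j)}/\partial\theta_2$, uniformly over $\theta^0$.

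The conceptual content essentially stops there; the remaining work is quantitative, and that is where I expect the main obstacle to be. A single admissible contour gives only a bound of order $\delta/(\nu-\rho)$, whereas reaching exactly the constant $\frac{4\pi}{\nu\ln(\nu/\rho)}$ requires a more careful exploitation of the two-complex-variable structure of $L$ — in particular that for real $\theta^0$ the point $L(\theta^0)$ has both coordinates of modulus $\rho_0\le\rho$, so that admissible contours of radius up to $\nu$ are available rather than only radius $\nu-\rho$ — together with an optimization over the family of admissible contours, from which the logarithmic factor $\ln(\nu/\rho)$ emerges. Carrying out this estimate and checking that the resulting constant is uniform over all base points $\theta^0\in D_{2,\rho}(\mathbb{R}^2)$ and both indices $i=1,2$ is the delicate part; this bookkeeping is precisely what is done in \cite{MirelessMischaikow}, to which we defer for the details.
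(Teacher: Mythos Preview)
The paper does not prove this lemma either: it simply states that ``the proof can be found in \cite{MirelessMischaikow}'' and moves on, so your proposal---sketching the underlying one-variable Cauchy estimate via the complex-linear change of variables $L$ and then deferring the sharp constant to the same reference---is in complete agreement with the paper's treatment. Your outline of the reduction is correct and in fact more informative than what the paper provides; the only minor quibble is that the elementary single-contour bound $\delta/(\nu-\rho)$ you derive is, for typical parameter values, actually \emph{stronger} than the stated constant $\tfrac{4\pi}{\nu\ln(\nu/\rho)}$, so the ``more careful exploitation'' you allude to is needed to reproduce the specific form of the bound rather than to improve upon yours.
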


%%%%%%%%%%%%%%%%%%%%%%%%%%%%%%%%%%%%%%%%%%%%%%%%%%
%%%%%%%%%%%%%%%%%%%% BVP & CHEBYSHEV %%%%%%%%%%%%%%%%%%%
%%%%%%%%%%%%%%%%%%%%%%%%%%%%%%%%%%%%%%%%%%%%%%%%%%

\section{Parameterized families of symmetric homoclinic orbits}\label{orbit}

In this section, we apply the technique of Section~\ref{sec:radii_poly_approach} in a Chebyshev series setting to rigorously prove existence of parameterized families of symmetric homoclinic orbits. More precisely, we present all necessary estimates and bounds in order to demonstrate that solutions of \eqref{eq:brigdeODE}~exist for all $\beta \in [0.5,1.9]$. 

\subsection{A projected boundary value problem formulation}
\label{s:rewriting}

We begin by transforming the symmetric homoclinic orbit problem \eqref{eq:brigdeODE} into a projected boundary value problem (BVP). In order to set up the projected BVP, we first use the symmetry of the orbit to simplify the problem and therefore solve only for ``half of the orbit". The following lemma provides a strategy to do this.

\begin{lemma}
Let $u_0,u_2$ and $t_0$ be arbitrary numbers, and let $u(t)$ be the solution of the initial value problem

\[
\begin{cases} 
u''''(t) +\beta u''(t) +e^{u(t)}-1 = 0, \\
\left(u(t_0),u'(t_0),u''(t_0),u'''(t_0) \right) = \left(u_0, 0, u_2, 0 \right).
\end{cases}
\]
Then $u(-t+2t_0)=u(t)$ for all $t$ for which the solution $u$ is defined.
\end{lemma}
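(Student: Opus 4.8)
The plan is to exploit the reversibility symmetry $t \mapsto -t+2t_0$ of the fourth-order ODE, which is manifest because the equation involves only even-order derivatives. Concretely, I would define the reflected function $w(t) \bydef u(-t+2t_0)$ and show that $w$ solves the \emph{same} initial value problem as $u$, so that uniqueness of solutions to the IVP forces $w \equiv u$.

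First I would compute the derivatives of $w$ via the chain rule: $w'(t) = -u'(-t+2t_0)$, $w''(t) = u''(-t+2t_0)$, $w'''(t) = -u'''(-t+2t_0)$, and $w''''(t) = u''''(-t+2t_0)$. Substituting into the ODE, since $\beta w''(t) = \beta u''(-t+2t_0)$ and $e^{w(t)} - 1 = e^{u(-t+2t_0)} - 1$, I get $w''''(t) + \beta w''(t) + e^{w(t)} - 1 = u''''(\tau) + \beta u''(\tau) + e^{u(\tau)} - 1 = 0$ with $\tau = -t + 2t_0$, so $w$ satisfies the same fourth-order equation. Next I would check the initial data at $t = t_0$: here $\tau = t_0$ as well, so $w(t_0) = u(t_0) = u_0$, $w'(t_0) = -u'(t_0) = 0$, $w''(t_0) = u''(t_0) = u_2$, and $w'''(t_0) = -u'''(t_0) = 0$. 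Thus $\left(w(t_0), w'(t_0), w''(t_0), w'''(t_0)\right) = \left(u_0, 0, u_2, 0\right)$, exactly the prescribed data — this is where the choice of vanishing odd-order components of the initial condition is used.

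Finally, I would invoke the Picard–Lindelöf theorem: the vector field associated with the first-order reformulation of the ODE (with $(u_1,u_2,u_3,u_4) = (u,u',u'',u''')$) is $C^1$ (indeed analytic, since $e^u$ is), hence locally Lipschitz, so the IVP has a unique solution on its maximal interval of existence. Since $u$ and $w$ solve the same IVP, they coincide wherever both are defined; one also notes that the maximal interval of $w$ is the reflection about $t_0$ of that of $u$, and by uniqueness these intervals (and the solutions on them) agree. This gives $u(-t+2t_0) = w(t) = u(t)$ for all $t$ in the domain of $u$.

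There is no real obstacle here — the argument is a standard reversibility-plus-uniqueness observation. The only point requiring a modicum of care is bookkeeping the domain of definition: a priori $w$ is defined on the reflected interval, so one should phrase the conclusion as holding on the (necessarily symmetric-about-$t_0$) maximal existence interval of $u$, rather than asserting anything about $t$ outside it.
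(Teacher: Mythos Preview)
Your proposal is correct and follows exactly the same approach as the paper: define the reflected function, verify it satisfies the same initial value problem (using that the equation involves only even-order derivatives and the odd-order initial data vanish), and conclude by uniqueness for ODEs. The paper's proof is simply a two-sentence summary of precisely this argument.
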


\begin{proof}
It is straightforward to verify that $u(-t+2t_0)$ is also a solution of the initial value problem. By the theorem of existence and uniqueness for ODEs, it follows that 
$u(-t+2t_0)=u(t)$ for all $t$ in the domain definition of $u$.
\end{proof}

Using the previous result, we fix a number $t_0=L>0$, and it follows that to solve \eqref{eq:brigdeODE}, it is enough to solve 
\begin{equation} \label{eq:BVP_at_infinity}
\begin{cases} u''''(t) +\beta u''(t) +e^{u(t)}-1 = 0, \\
u'(-L)=0, \quad u'''(-L) =0,  \\
\lim_{t \to \infty} \left( u(t),u'(t),u''(t),u'''(t) \right) = 0.
\end{cases}
\end{equation}
The idea now is to modify the boundary value problem \eqref{eq:BVP_at_infinity} in a way that the boundary value at $t=\infty$ is removed by a projected boundary value at $t=L$ where we impose at that time that $ \left( u(L),u'(L),u''(L),u'''(L) \right) \in W_{\rm loc}^s(0)$, a local stable manifold at $0$. In order to achieve this step, we use the theory of Section~\ref{Manifold} to obtain a real-valued parameterization $P_\beta$ of $W_{\rm loc}^s(0)$ at the parameter value $\beta \in [0.5,1.9]$:
\[
P_{\beta}(\theta)=Q_{\beta}(\theta_1+{\rm i} \theta_2,\theta_1- {\rm i} \theta_2)=
\sum_{|\alpha|=0}^{\infty} 
a_{\alpha}(\beta) (\theta_1+\text{i}\theta_2)^{\alpha_1} (\theta_1-\text{i}\theta_2)^{\alpha_2},
\]
which is well-defined for all $\theta \in D_{2,\tilde \nu}(\mathbb{R}^2) = \left\{ \theta \in \mathbb{R}^2 : \vert\theta\vert_{2} = \sqrt{\theta_1^2+\theta_2^2} \leq \tilde \nu \right\}$, where the size $\tilde \nu = \tilde \nu(\beta)$ of the domain of $P_\beta$ changes as the parameter $\beta \in [0.5,1.9]$ varies. Using the parameterization, we impose that \begin{equation}\label{e:Ltheta}
   (u(L),u'(L),u''(L),u'''(L))^T = {P}_\beta(\theta)
\end{equation}
for some $\theta \in D_{2,\tilde \nu}(\mathbb{R}^2)$, which implies that the orbit lies in the stable manifold. This introduces an indeterminacy that needs to be resolved. 
Namely, there is a one parameter family of pairs $(L,\theta)$ solving~\eqref{e:Ltheta} while describing the same orbit.
To overcome this, we impose that $\theta \in \partial D_{2,\rho}(\mathbb{R}^2) = \left\{ \theta \in \mathbb{R}^2 : \vert\theta\vert_{2} = \sqrt{\theta_1^2+\theta_2^2} = \rho \right\}$, for some fixed $\rho < \tilde{\nu}$, and we solve for the angle $\psi$. More precisely, we consider $\theta$ such that $\sqrt{\theta_1^2 + \theta_2^2} = \rho$ by setting $\theta_1 +\text{i}\theta_2 = \rho e^{\text{i} \psi}$ for some $\psi \in [0,2\pi)$. In this case, the evaluation of the parameterization of the local stable manifold along $\partial D_{2,\rho}(\mathbb{R}^2)$ reduces to
\begin{align*}
{P}_\beta(\psi) &= \sum_{|\alpha|=0}^\infty {a}_{\alpha}(\beta) (\theta_1+\text{i}\theta_2)^{\alpha_1} (\theta_1-\text{i}\theta_2)^{\alpha_2} \\
&= \sum_{|\alpha|=0}^\infty {a}_{\alpha}(\beta) \rho^{\alpha_1} e^{\text{i}\alpha_1\psi} \rho^{\alpha_2} e^{-\text{i}\alpha_2 \psi} \\
 &= \sum_{|\alpha|=0}^\infty {a}_{\alpha}(\beta) \rho^{|\alpha|} e^{\text{i}(\alpha_1-\alpha_2)\psi}.
\end{align*}
We slightly abuse notation by using the same notation ${P}_\beta$ to denote both ${P}_\beta(\theta)$ and ${P}_\beta(\psi)$. We can therefore define the projected BVP
\begin{equation} \label{eq:projected_BVP_1}
\begin{cases} u''''(t) +\beta u''(t) +e^{u(t)}-1 = 0, \quad t \in [-L,L], \\
u'(-L)=0, \quad u'''(-L) =0,  \\
(u(L),u'(L),u''(L),u'''(L))^T = {P}_\beta(\psi),
\end{cases}
\end{equation}
where $L>0$ and $\psi \in [0,2 \pi)$ are variables. As in Section~\ref{sec:introduction}, we make the change of variables 
\[
(v^{(1)},v^{(2)},v^{(3)},v^{(4)}) \bydef (e^{u_1}-1,u_2,u_3,u_4)
\]
and set $v=(v^{(1)},v^{(2)},v^{(3)},v^{(4)})$ to obtain that $v' = \Psi_{\beta}(v)$, where $\Psi_{\beta}:\R^4 \to \R^4$ is the vector field given by the right-hand side of \eqref{quadratic_system}. 
We rescale time via $t \mapsto t/L$ so that \eqref{eq:projected_BVP_1} becomes
\begin{equation} \label{e:reducedproblem}
\begin{cases} 
\dot v = L \Psi_{\beta}(v), \quad t \in [-1,1], \\
v^{(2)}(-1)=0, \quad v^{(4)}(-1) =0,  \\
v(1) = {P}_\beta(\psi).
\end{cases}
\end{equation}
A triplet $(L,\psi,v)$ satisfying \eqref{e:reducedproblem} thus corresponds to a symmetric homoclinic solution of the suspension bridge equation.
The rest of this section  is dedicated to applying the technique of Section~\ref{sec:radii_poly_approach} in a Chebyshev series setting to rigorously prove existence of parameterized families of solutions of the projected BVP \eqref{e:reducedproblem} for all $\beta \in [0.5,1.9]$.  This begins by defining a zero finding problem $F=0$ whose solutions correspond to symmetric homoclinic solutions of the suspension bridge equation.

\subsection{Setting up the zero finding problem using Chebyshev series}
\label{s:setupcheb}

Now that $v^{(i)}(t)$ is defined on $[-1,1]$ and needs to solve a boundary value problem, describing $v^{(i)}(t)$ in terms of a Chebyshev series is a natural choice,
see~\cite{CL,LR,BDLM,RayJB}.
Denote by $T_k:[-1,1] \to \R$ the $k$-th Chebyshev polynomial with $k \ge 0$, where $T_0(t)=1$, $T_1(t)=t$ and 
$T_{k+1}(t)=2t T_k(t)-T_{k-1}(t)$ for $k \ge 1$.
One way to characterize the Chebyshev polynomials is through the identity
$T_k(t) = \cos(k \arccos t)$, from which it follows that $\|T_k\|_\infty=1$,
$T_k(1)=1$, and $T_k(-1)=(-1)^k$.

For each $i=1,2,3,4$, we expand $v^{(i)}$ using a Chebyshev series expansion, that is
\begin{equation}\label{eq:ChebyExpansion}
v^{(i)}(t)= x_0^{(i)}+ 2 \sum_{k=1}^{\infty} x_k^{(i)} T_k(t).
\end{equation}
For each $i=1,2,3,4$, denote by $x^{(i)} \bydef \lbrace x_k^{(i)} \rbrace_{k \ge 0}$ the infinite dimensional vector of Chebyshev coefficients of $v^{(i)}$. 
The vector field is analytic (polynomial) and therefore the solutions (if they exist) of the projected BVP \eqref{e:reducedproblem} are analytic. By the Paley-Wiener theorem, this implies that the Chebyshev coefficients of each component of $v$ decay geometrically to zero. Hence, there exists a number $\nu>1$ such that $x^{(i)} \in \ell_\nu^1$ for each $i=1,2,3,4$, where 
\[
\ell_\nu^1 = \left\{ a = (a_k)_{k \ge 0} :  \left\| a \right\|_{1,\nu} \bydef |a_0|+ 2\sum_{k=1}^\infty |a_k| \nu^{k} < \infty
\right\}.
\]
We remark that throughout this section $\nu \geq 1$.

\begin{remark}[\bf Notation]
The decay rate $\nu$ in the definition of the Banach space $\ellnu$ appears both in the current section and in Section~\ref{Manifold}. Both values need not to be the same. Therefore, to avoid confusion, we denote by $\tilde{\nu}$ the value from Section~\ref{Manifold}. Moreover, although the sequence space $\ell_\nu^1$ as considered above is slightly different from the one used in Section~\ref{Manifold}, we nevertheless use the same notation, since the spaces and norms are completely analogous to those used in Section~\ref{s:fixedpointmanifold}. 
\end{remark}

The dual space can be characterized as follows. 
\begin{lemma}\label{l:dualbound}
The dual space $(\ell^1_\nu)^*$ is isomorphic to 
\[
\ell_{\nu^{-1}}^\infty = \left\{ c = (c_k)_{k \ge 0} : 
\left\| c \right\|_{\infty,\nu^{-1}} \bydef \max \left( |c_0|, \tfrac{1}{2} \sup_{k \geq 1} |c_k| \nu^{-k} \right) < \infty \right\}.
\]
For all $a \in \ell^1_\nu$ and $c \in \ell^\infty_{\nu^{-1}}$ we  have
\begin{equation}\label{e:dualbound}
\Bigl|\sum_{k\geq 0} c_k a_k \Bigr| \leq \|c\|_{\infty,\nu^{-1}} \|a\|_{1,\nu}.
\end{equation}
\end{lemma}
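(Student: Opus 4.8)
The plan is to prove Lemma~\ref{l:dualbound} in two parts: first identify the dual space, then establish the duality estimate~\eqref{e:dualbound}, which is in fact the computational heart of the matter. I would start by making the pairing explicit. Given the definition $\|a\|_{1,\nu}=|a_0|+2\sum_{k\geq 1}|a_k|\nu^k$, the natural pairing of $a\in\ell^1_\nu$ with a coefficient sequence $c$ is $\langle c,a\rangle\bydef\sum_{k\geq 0}c_ka_k$. The weights $1$ (for $k=0$) and $2\nu^k$ (for $k\geq 1$) suggest rewriting this as $c_0a_0+\sum_{k\geq 1}(2\nu^kc_k)(a_k)$, or equivalently grouping the $2\nu^k$ with $a_k$ to view $\ell^1_\nu$ as a weighted $\ell^1$ space with weights $w_0=1$, $w_k=2\nu^k$; then its dual is the weighted $\ell^\infty$ space with the reciprocal weights, giving exactly the norm $\|c\|_{\infty,\nu^{-1}}=\max(|c_0|,\tfrac12\sup_{k\geq1}|c_k|\nu^{-k})$ stated in the lemma.

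For the isomorphism claim, I would argue as follows. Define $\Phi:\ell^\infty_{\nu^{-1}}\to(\ell^1_\nu)^*$ by $\Phi(c)(a)=\sum_{k\geq 0}c_ka_k$. Step one: show $\Phi(c)$ is a well-defined bounded functional with $\|\Phi(c)\|\leq\|c\|_{\infty,\nu^{-1}}$; this is precisely~\eqref{e:dualbound}, proved by the estimate
\[
\Bigl|\sum_{k\geq 0}c_ka_k\Bigr|\leq|c_0||a_0|+\sum_{k\geq 1}\frac{|c_k|}{\nu^k}\cdot|a_k|\nu^k\leq\max\Bigl(|c_0|,\sup_{k\geq 1}\tfrac{|c_k|}{\nu^k}\Bigr)\Bigl(|a_0|+\sum_{k\geq 1}|a_k|\nu^k\Bigr),
\]
and then bounding $\sup_k|c_k|\nu^{-k}\leq 2\|c\|_{\infty,\nu^{-1}}$ together with $|a_0|+\sum_{k\geq1}|a_k|\nu^k\leq|a_0|+2\sum_{k\geq1}|a_k|\nu^k=\|a\|_{1,\nu}$ — or, more cleanly, splitting off the $k=0$ term and handling $k\geq1$ with the factor $\tfrac12$ absorbed into the $2$ in the norm, which gives the inequality with the sharp constant. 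Step two: show $\Phi$ is injective, by testing $\Phi(c)$ against the standard basis vectors $e^{(k)}\in\ell^1_\nu$. Step three: show $\Phi$ is surjective and norm-preserving, i.e., every $\varphi\in(\ell^1_\nu)^*$ equals $\Phi(c)$ for $c_k\bydef\varphi(e^{(k)})$, with $\|c\|_{\infty,\nu^{-1}}\leq\|\varphi\|$; this follows because $e^{(k)}$ has norm $1$ for $k=0$ and $2\nu^k$ for $k\geq1$, so $|c_k|=|\varphi(e^{(k)})|\leq\|\varphi\|\cdot\|e^{(k)}\|$ gives $|c_0|\leq\|\varphi\|$ and $\tfrac12|c_k|\nu^{-k}\leq\|\varphi\|$ for $k\geq1$, hence $\|c\|_{\infty,\nu^{-1}}\leq\|\varphi\|$. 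Finite truncation together with continuity of $\varphi$ and density of finitely-supported sequences in $\ell^1_\nu$ then identifies $\varphi$ with $\Phi(c)$.

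The genuinely delicate point — and the only one worth being careful about — is matching the constant $\tfrac12$ in the definition of $\|\cdot\|_{\infty,\nu^{-1}}$ with the factor $2$ in $\|\cdot\|_{1,\nu}$, so that~\eqref{e:dualbound} holds with constant exactly $1$ rather than $2$. The clean way is to never introduce a lossy $\sup$: write $\sum_{k\geq1}|c_k||a_k|=\sum_{k\geq1}\bigl(\tfrac12|c_k|\nu^{-k}\bigr)\bigl(2|a_k|\nu^k\bigr)\leq\bigl(\sup_{k\geq1}\tfrac12|c_k|\nu^{-k}\bigr)\sum_{k\geq1}2|a_k|\nu^k\leq\|c\|_{\infty,\nu^{-1}}\bigl(\|a\|_{1,\nu}-|a_0|\bigr)$, and then add $|c_0||a_0|\leq\|c\|_{\infty,\nu^{-1}}|a_0|$. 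Everything else is the standard weighted-$\ell^1$/$\ell^\infty$ duality, which I would only sketch rather than write out in full, since the paper needs~\eqref{e:dualbound} far more than it needs a pedantic verification of the isomorphism. I expect no real obstacle; the main risk is simply a bookkeeping slip with the weights, which the grouping above is designed to avoid.
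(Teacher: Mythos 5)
Your proof is correct. The paper states Lemma~\ref{l:dualbound} without proof, treating it as the standard weighted $\ell^1$--$\ell^\infty$ duality, which is exactly what you supply: the pairing estimate with the sharp constant follows from your final grouping $\sum_{k\geq1}\bigl(\tfrac12|c_k|\nu^{-k}\bigr)\bigl(2|a_k|\nu^k\bigr)$ plus the $k=0$ term, and the isomorphism part (injectivity and surjectivity via $c_k=\varphi(e^{(k)})$ with $\|e^{(0)}\|_{1,\nu}=1$, $\|e^{(k)}\|_{1,\nu}=2\nu^k$) is the standard argument. Your first displayed chain would lose a factor of $2$ if pushed through literally, but you correctly identify this and replace it with the sharp version, so there is no gap.
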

The following lemma is analogous to Lemma~\ref{lem:operator_norm}.
\begin{lemma}\label{l:Blnu1norm}
Let $\Gamma \in B(\ell^1_{\nu})$, the space of bounded linear operators from $\ell^1_\nu$ to itself, acting as $(\Gamma a)_i =\sum_{j\geq 0} \Gamma_{i,j} a_j$. Define the weights
$\omega=(\omega_k)_{k\geq0}$ by $\omega_0=1$ and $\omega_k = 2 \nu^k$ for $k\geq 1$. Then 
\[
   \| \Gamma \|_{B(\ell^1_\nu)} = \sup_{j \geq 0} \frac{1}{\omega_j}  \sum_{i\geq 0} | \Gamma_{i,j} | \omega_i .
\] 
\end{lemma}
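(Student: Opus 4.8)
The plan is to prove Lemma~\ref{l:Blnu1norm} by the standard computation of the operator norm of a matrix acting on a weighted $\ell^1$ space, using the weights $\omega_k$ that make $\|\cdot\|_{1,\nu}$ literally a weighted $\ell^1$ norm. Concretely, for $a\in\ell^1_\nu$ write $\|a\|_{1,\nu}=\sum_{k\geq 0}\omega_k|a_k|$ with $\omega_0=1$ and $\omega_k=2\nu^k$ for $k\geq 1$, so that the map $a\mapsto (\omega_k a_k)_{k\geq 0}$ is an isometric isomorphism from $\ell^1_\nu$ onto the standard $\ell^1$. Under this identification, $\Gamma$ becomes the matrix $\widetilde\Gamma$ with entries $\widetilde\Gamma_{i,j}=\frac{\omega_i}{\omega_j}\Gamma_{i,j}$, and the claim reduces to the well-known fact that the operator norm of a matrix on $\ell^1$ equals the supremum over columns of the $\ell^1$ norm of that column, i.e. $\sup_j\sum_i|\widetilde\Gamma_{i,j}|=\sup_j\frac{1}{\omega_j}\sum_i|\Gamma_{i,j}|\omega_i$.

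First I would establish the upper bound. For $a\in\ell^1_\nu$ with $\|a\|_{1,\nu}=1$, estimate
\[
\|\Gamma a\|_{1,\nu}=\sum_{i\geq 0}\omega_i\Bigl|\sum_{j\geq 0}\Gamma_{i,j}a_j\Bigr|
\leq \sum_{j\geq 0}|a_j|\sum_{i\geq 0}\omega_i|\Gamma_{i,j}|
= \sum_{j\geq 0}\omega_j|a_j|\cdot\frac{1}{\omega_j}\sum_{i\geq 0}\omega_i|\Gamma_{i,j}|
\leq \Bigl(\sup_{j\geq 0}\frac{1}{\omega_j}\sum_{i\geq 0}\omega_i|\Gamma_{i,j}|\Bigr)\|a\|_{1,\nu},
\]
where the interchange of the order of summation is justified by Tonelli's theorem since all terms are nonnegative. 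Second, for the lower bound (matching inequality), fix any index $j$ and take $a$ to be (a scalar multiple of) the $j$-th basis vector, namely $a_j=1/\omega_j$ and $a_k=0$ for $k\neq j$, so $\|a\|_{1,\nu}=1$; then $\|\Gamma a\|_{1,\nu}=\frac{1}{\omega_j}\sum_{i\geq 0}\omega_i|\Gamma_{i,j}|$, which shows $\|\Gamma\|_{B(\ell^1_\nu)}\geq \frac{1}{\omega_j}\sum_i\omega_i|\Gamma_{i,j}|$ for every $j$, hence $\geq$ the supremum over $j$. Combining the two inequalities gives equality.

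There is essentially no serious obstacle here; this is a routine weighted-$\ell^1$ duality argument and is the exact analogue of Lemma~\ref{lem:operator_norm}. The only point requiring a word of care is that one must know a priori that $\Gamma$ maps $\ell^1_\nu$ into itself (which is part of the hypothesis $\Gamma\in B(\ell^1_\nu)$), so that all the manipulations take place among convergent series; the Tonelli interchange then handles absolute convergence automatically. If one wanted to be fully careful about the lower bound when the column sum is infinite, one notes that in that case $\Gamma\notin B(\ell^1_\nu)$, contradicting the hypothesis, so in fact every column sum is finite and the supremum is the genuine operator norm.
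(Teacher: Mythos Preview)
Your proof is correct and entirely standard. The paper itself does not give a proof of this lemma; it merely states it as ``analogous to Lemma~\ref{lem:operator_norm}'' (which is also stated without proof), so there is nothing to compare against beyond noting that your argument is exactly the routine weighted-$\ell^1$ column-sum computation the authors have in mind.
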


The Banach space of unknowns $x \bydef (L,\psi,x^{(1)},x^{(2)},x^{(3)},x^{(4)})$ is
\begin{equation} \label{eq:Banach_Space_Chebyshev}
X \bydef \mathbb{R}^2 \times (\ellnu)^4 ,
\end{equation}
endowed with the norm 
\[
\|x\|_X \bydef \max \left\lbrace |L|,|\psi|,\|x^{(1)}\|_{1,\nu},\|x^{(2)}\|_{1,\nu},\|x^{(3)}\|_{1,\nu},\|x^{(4)}\|_{1,\nu} \right\rbrace.
\]

In terms of Chebyshev coefficients the differential equation $\dot{v}=L\Psi_{\beta}(v)$ becomes (see e.g.~\cite{LR}) 
\begin{equation}\label{e:fkj}
	\left\{
\begin{array}{rll}
f_k^{(1)}(\beta,x) &\!\!\!\!\bydef 2kx^{(1)}_k + L[x_{k\pm1}^{(2)}  +(x^{(1)}*x^{(2)})_{k\pm1} ]  &\!\!\!\!=0 , \\
f_k^{(2)}(\beta,x) &\!\!\!\!\bydef 2kx^{(2)}_k + Lx_{k\pm1}^{(3)} &\!\!\!\!=0 , \\
f_k^{(3)}(\beta,x) &\!\!\!\!\bydef 2kx^{(3)}_k + Lx_{k\pm1}^{(4)} &\!\!\!\!=0 ,  \\
f_k^{(4)}(\beta,x) &\!\!\!\!\bydef 2kx^{(4)}_k + L[-x_{k\pm1}^{(1)}- \beta x_{k\pm1}^{(3)}]  &\!\!\!\!=0 ,
\end{array} \right.
\end{equation}
for all $k \geq 1$. 
Here $x_{k\pm1}^{(i)} \bydef x_{k+1}^{(i)}-x_{k-1}^{(i)}$, 
and $*$ denotes the discrete convolution product $\ast: \ellnu \times \ellnu \to \ellnu$ defined as follows. Let $a,b \in \ellnu$, then for all $k\geq 0$ the $k$-th entry of the convolution product $a\ast b$ is given by
\[
(a\ast b)_k = \sum_{\substack{k_1+k_2=k \\ k_1,k_2 \in \mathbb{Z}}} a_{|k_1|} b_{|k_2|} .
\]
The choice of norm and convolution product is justified by the fact $\ellnu$ is a Banach algebra, that 
is $\left\| a \ast b \right\|_{1,\nu} \leq \left\| a \right\|_{1,\nu} \left\| b \right\|_{1,\nu} $, for all $a,b \in \ellnu$.

 The symmetry conditions $v^{(2)}(-1)=v^{(4)}(-1)=0$ reduce to
\begin{alignat}{2}
\eta^{(1)}(\beta,x) &\bydef \displaystyle x_0^{(2)}+ 2 \sum_{k=1}^{\infty} x^{(2)}_k(-1)^k
&& =0 ,  \label{eta1} \\
\eta^{(2)}(\beta,x) &\bydef \displaystyle x_0^{(4)}+ 2 \sum_{k=1}^{\infty} x^{(4)}_k(-1)^k
&& =0 , \label{eta2}
\end{alignat}
and the boundary conditions $v(1)=P_\beta(\psi)$ become
\begin{equation}
f_0^{(i)}(\beta,x) \bydef \displaystyle x_0^{(i)} +2\sum_{k=1}^{\infty} x^{(i)}_k - P_\beta^{(i)}(\psi) =0     \qquad \text{for } i=1,2,3,4.
\end{equation}
The full set of equations that we want to solve is thus $F(\beta,x)=0$,
where  
\begin{equation} \label{eq:definition_F}
F \bydef \left( \eta^{(1)},\eta^{(2)},F^{(1)},F^{(2)},F^{(3)},F^{(4)}\right),
\qquad 
\text{with } F^{(i)} \bydef  \left\lbrace f_k^{(i)}\right\rbrace_{k\geq 0}.
\end{equation}
In order to solve rigorously the problem $F(\beta,x)=0$ in the Banach space $X$, for all $\beta \in [0.5,1.9]$, we apply the radii polynomial approach of Section~\ref{sec:radii_poly_approach}.

\subsection{The finite dimensional reduction of the zero finding problem}

Having identified the operator $F$ given in \eqref{eq:definition_F} whose zeros correspond to symmetric homoclinic orbits of \eqref{eq:ode}, the next step is to compute numerical approximations, which requires considering a finite dimensional projection of the Banach space $X$ given in \eqref{eq:Banach_Space_Chebyshev}.
Given a sequence $a= (a_k)_{k \ge 0} \in \ell_\nu^1$, denote by $a^{[m]} = (a_0,\dots,a_{m-1}) \in \R^m$ the Galerkin projection 
of $a$ onto the first $m$ Chebyshev coefficients. Given an infinite dimensional vector $x=(L,\psi,x^{(1)},x^{(2)},x^{(3)},x^{(4)}) \in X$, 
denote 
\begin{equation} \label{eq:x^{[m]}}
x^{[m]} \bydef \bigl(L,\psi,(x^{(1)})^{[m]},(x^{(2)})^{[m]},(x^{(3)})^{[m]},(x^{(4)})^{[m]}\bigr) \in \R^2 \times (\R^m)^4 \cong \R^{4m+2}.
\end{equation} 
In this context, the finite dimensional Banach space $\R^{4m+2}$ is the finite dimensional projection of $X=\R^2 \times (\ell_\nu^1)^4$, and $x^{[m]}$ is the finite dimensional projection of $x$. 
We slightly abuse the notation by denoting $x^{[m]} \in X$ as the vector built from $x^{[m]} \in \R^{4m+2}$ by padding each entry $(x^{(i)})^{[m]}$ ($i=1,2,3,4$) with infinitely many zeros. 
The finite dimensional projection of $F$ given in \eqref{eq:definition_F} is defined as
\begin{align*}
F^{[m]} : \R \times \R^{4m+2} &\to \R^{4m+2}  \\
(\beta,x^{[m]}) & \mapsto F^{[m]} (\beta,x^{[m]}) \bydef  \bigl(F(\beta,x^{[m]}) \bigr)^{[m]}.
\end{align*}
We want to compute on $F^{[m]}$, but it depends on the parameterization $P_{\beta_s}$, which itself depends on \emph{infinitely} many Taylor coefficients. To remedy this, we consider a finite dimensional reduction of $P_{\beta_s}$. Recalling~\eqref{eq:barP_beta_s}, for every $s$, denote by $\overline{P}_{\beta_s}$ the computable approximation of the stable manifold given by
\begin{align}
    \overline{P}_{\beta_s}(\psi)&= \sum_{|\alpha|< N} (\overa_{0,\alpha}+s(\overa_{1,\alpha}-\overa_{0,\alpha}))\rho^{|\alpha|}e^{\text{i}\psi(\alpha_1-\alpha_2)} \nonumber \\
    &= \sum_{|\alpha|< N} (\overa_{0,\alpha}+s\Delta\overa_{\alpha})\rho^{|\alpha|}e^{\text{i}\psi(\alpha_1-\alpha_2)} \nonumber  \\
    &= \sum_{|\alpha|< N} \overa_{0,\alpha}\rho^{|\alpha|}e^{\text{i}\psi(\alpha_1-\alpha_2)} +s\sum_{|\alpha|< N} \Delta\overa_{\alpha}\rho^{|\alpha|}e^{\text{i}\psi(\alpha_1-\alpha_2)}
	 \nonumber  \\
    &= \overline{P}_{\beta_0}(\psi) + s \Delta \overline{P}(\psi),
	\label{e:defDeltaP}
\end{align}
where $\overa_{0,\alpha}$ and $\overa_{1,\alpha}$ are the numerical approximations of the coefficients of the stable manifold for $\beta_0$ and $\beta_1$ respectively. 

Finally, let $\overline{F}(\beta,x^{[m]})$ denote the finite dimensional projection of the operator using a Galerkin projection on the last four components and using the finite dimensional approximation $\overline{P}_{\beta}$ for the parameterization of the stable manifold. More explicitly, 
\begin{equation} \label{eq:definition_F_projection}
\overline{F}(\beta,x^{[m]}) \bydef \left( \overline{\eta}^{(1)}(\beta,x^{[m]}),\overline{\eta}^{(2)}(\beta,x^{[m]}),\overline{F}^{(1)}(\beta,x^{[m]}),\dots,\overline{F}^{(4)}(\beta,x^{[m]})\right),
\end{equation}
with $\overline{F}^{(i)}(\beta,x^{[m]}) \bydef \left\lbrace \overline{f}_k^{(i)}(\beta,x^{[m]}) \right\rbrace_{k = 0}^{m-1}$ for $i=1,2,3,4$, and 
\begin{align*} 
\overline{\eta}^{(1)}(\beta,x^{[m]}) &\bydef  x_0^{(2)}+ 2 \sum_{k=1}^{m-1} x^{(2)}_k(-1)^k, \qquad \overline{\eta}^{(2)}(\beta,x^{[m]}) \bydef  x_0^{(4)}+ 2 \sum_{k=1}^{m-1} x^{(4)}_k(-1)^k \\
\overline{f}_0^{(i)}(\beta,x^{[m]}) &\bydef x_0^{(i)} +2\sum_{k=1}^{m-1} x^{(i)}_k - \overline{P}_\beta^{(i)}(\psi) =0     \qquad \text{for } i=1,2,3,4,
\end{align*}
while $\overline{f}_k^{(i)}(\beta,x^{[m]})=f_k^{(i)}(\beta,x^{[m]})$ for all $k=1,\dots,m-1$, see~\eqref{e:fkj}.
Having identified $\overline{F}:\R \times \R^{4m+2} \to \R^{4m+2}:(\beta,x^{[m]}) \mapsto \overline{F}(\beta,x^{[m]})$ defined in \eqref{eq:definition_F_projection} as the finite dimensional reduction of $F$ given in \eqref{eq:definition_F}, we can apply the finite dimensional Newton's method to find numerical approximations. The next step is to define an infinite dimensional Newton-like operator $T:\R \times X \to X$ on which we apply the uniform contraction principle (via the radii polynomial approach of Section~\ref{sec:radii_poly_approach}).

\subsection{The Newton-like operator for the homoclinic orbit}

Let $\beta_0<\beta_1$ be two different parameter values, and consider two numerical approximations $\bx_0$ and $\bx_1$ such that $F(\beta_0,\bx_0)\approx 0$ and $F(\beta_1,\bx_1)\approx 0$. In practice we find $\bx_i$ by solving $\overline{F}(\beta_i,\cdot)=0$ numerically. For every $s\in [0,1]$, set
\[
    \bx_s = \bx_0 + s \Delta \bx,      \qquad \Delta \bx \bydef \bx_1-\bx_0
\]
and
\[
    \beta_s = \beta_0 + s\Delta \beta, \qquad \Delta \beta \bydef \beta_1-\beta_0.
\]
We denote  $\bx_s=(\overL_{s},\overp_{s},\bx_s^{(1)},\bx_s^{(2)},\bx_s^{(3)},\bx_s^{(4)}) \in X$ for 
$s \in [0,1]$,
and we recall that each $\bx_s^{(j)}$ is obtained from 
$(\bx_s^{(j)})^{[m]} \in \R^m$ by padding with zeros.  
Similarly, we denote
$\Delta \bx =
(\Delta\overL,\Delta\overp,\Delta\bx^{(1)},\Delta\bx^{(2)},\Delta\bx^{(3)},\Delta\bx^{(4)})$.

We now construct a fixed point operator $T(\beta,x) = x - A F(\beta,x)$ so that it is a uniform contraction over the 
interval of parameters $[\beta_0,\beta_1]$, whose fixed points $x=x(\beta)$ correspond to zeros of $F(\beta,\cdot)$ at a given parameter value $\beta \in [\beta_0,\beta_1]$. The operator $A$ is constructed as an approximate inverse of $DF(\beta_0,\bx_0)$.
Let $\bx_0$ be such that $\overline{F}(\beta_0,\bx_0)\approx 0$ and let $A^{[m]}\approx (D\overline{F}(\beta_0,\bx_0))^{-1}$ be a numerical approximation of the inverse of the Jacobian matrix. We decompose the $(4m+2) \times (4m+2)$ matrix $A^{[m]} $, into 36 blocks as
\begin{equation}\label{e:blocks}
  A^{[m]} = \left( \begin{array}{ccccc}
  A^{[m]}_{1,1} & A^{[m]}_{1,2} & A^{[m]}_{1,3} & \cdots & A^{[m]}_{1,6} \\
  A^{[m]}_{2,1} & A^{[m]}_{2,2} & A^{[m]}_{2,3} & \cdots & A^{[m]}_{2,6} \\
  A^{[m]}_{3,1} & A^{[m]}_{3,2} & A^{[m]}_{3,3} & \cdots & A^{[m]}_{3,6} \\
  \vdots & \vdots & \vdots & \ddots & \vdots \\
  A^{[m]}_{6,1} & A^{[m]}_{6,2} & A^{[m]}_{6,3} & \cdots & A^{[m]}_{6,6} 
  \end{array}
  \right) \, .
\end{equation}
Here $A^{[m]}_{i,j}$ is scalar for $1 \leq i,j \leq 2 $, 
$A^{[m]}_{i,j}$ is a row vector of length $m$ for $1\leq i \leq 2$, $3 \leq j\leq 6$, 
$A^{[m]}_{i,j}$ is a column vector of length $m$ for $3\leq i \leq 6$, $1 \leq j\leq 2$, and  
$A^{[m]}_{i,j}$ is a $m \times m$ matrix for $3\leq i,j \leq 6$.

\begin{definition}[\bf Definition of \boldmath$A$\unboldmath] \label{def:OperatorA}
 We  extend this finite dimensional operator $A^{[m]}= \lbrace A^{[m]}_{i,j} | 1\leq i,j \leq 6 \rbrace$ to an operator $A=\lbrace A_{i,j} | 1\leq i,j \leq 6 \rbrace$ on $X$ defined block-wise as
\begin{itemize}
	\item $A_{i,j} \in \mathbb{R}$ for $1 \leq i,j \leq 2$, where $A_{i,j}=A^{[m]}_{i,j}$;
	\item $A_{i,j} \in (\ellnu)^{*}$ for $1 \leq i \leq 2$ and $3\leq j \leq 6$, where $A_{i,j}$ is $A^{[m]}_{i,j}$ padded with zeros;
	\item $A_{i,j} \in \ellnu$ for $3 \leq i \leq 6$ and $1\leq j \leq 2$,
	where $A_{i,j}$ is $A^{[m]}_{i,j}$ padded with zeros;
	\item $A_{i,j} \in B(\ellnu,\ellnu)$ for $3\leq i,j \leq 6$, where
\begin{equation}\label{e:Adelta}
	(A_{i,j}x^{(j-2)})_k= \begin{cases}
	\left(A_{i,j}^{[m]}(x^{(j-2)})^{[m]}\right)_k &\quad \mbox{if}~ 0 \le k \leq m-1, \\[2mm]
	\frac{\delta_{i,j}}{2k}x_k^{(j-2)} & \quad\mbox{if}~ k\geq m ,
	\end{cases}
\end{equation}
with $\delta_{i,j}$ the usual Kronecker delta.
\end{itemize}
Here $(\ellnu)^{*}$ is the dual of $\ellnu$.
As an example, for $1 \leq i \leq 2$, $3\leq j \leq 6$, we have $A_{i,j} a = A^{[m]}_{i,j} a^{[m]}$.
The action of $A$ on $x=(L,\psi,x^{(1)},x^{(2)},x^{(3)},x^{(4)}) \in X$ is thus
\begin{alignat*}{1}
(Ax)^{(i)} & = A_{i,1}L +A_{i,2}\psi +\sum_{j=3}^6 A_{i,j}x^{(j-2)}, 
\qquad \text{for } 1\leq i \leq 6,
\end{alignat*}
where $(Ax)^{(i)} \in \mathbb{R}$ for $i=1,2$ and 
 $(Ax)^{(i)} \in \ell^1_{\nu}$ for $i=3,4,5,6$.
\end{definition}

We consider the Newton-like operator 
\begin{equation} \label{eq:newton-like_homoclinic}
  T(\beta_s,x)=x -AF(\beta_s,x)
\end{equation}
where $s \in[0,1]$ and $A$ is as in Definition~\ref{def:OperatorA}. 

\begin{lemma}
Given the operator $A$ as in Definition~\ref{def:OperatorA}. Then $T:\R \times X \to X$.
\end{lemma}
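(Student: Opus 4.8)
The plan is to verify that $T$ maps $X$ into $X$ by showing that each of the six component outputs of $T(\beta_s,x)=x-AF(\beta_s,x)$ lands in the appropriate factor space, namely $(Tx)^{(i)}\in\mathbb{R}$ for $i=1,2$ and $(Tx)^{(i)}\in\ell^1_\nu$ for $i=3,4,5,6$. Since $x\in X$ already has the correct factor structure, it suffices to check the same claim for $AF(\beta_s,x)$. This in turn reduces, by Definition~\ref{def:OperatorA} and the displayed formula $(Ax)^{(i)}=A_{i,1}L+A_{i,2}\psi+\sum_{j=3}^6 A_{i,j}x^{(j-2)}$, to checking that $F(\beta_s,x)\in X$ and that each block $A_{i,j}$ acts with the stated mapping properties.

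First I would establish that $F(\beta_s,x)\in X$ for every $x\in X$. The first two components $\eta^{(1)},\eta^{(2)}$ are real numbers: the series $x_0^{(2)}+2\sum_{k\geq1}x_k^{(2)}(-1)^k$ converges absolutely because $\sum_k |x_k^{(2)}|\nu^k<\infty$ and $\nu\geq1$, so $|\eta^{(1)}(\beta_s,x)|\leq \|x^{(2)}\|_{1,\nu}<\infty$, and likewise for $\eta^{(2)}$. For the components $F^{(i)}=\{f_k^{(i)}\}_{k\geq0}$ I must show $F^{(i)}(\beta_s,x)\in\ell^1_\nu$. The entry $f_0^{(i)}$ is a scalar by the same absolute-convergence argument, together with the fact that $P_{\beta_s}^{(i)}(\psi)=\sum_{|\alpha|\geq0} a_\alpha^{(i)}(s)\rho^{|\alpha|}e^{\mathrm{i}(\alpha_1-\alpha_2)\psi}$ converges since $\rho<\tilde\nu$ and the manifold coefficients lie in $\ell^1_{\tilde\nu}$ (Proposition~\ref{prop:validation_manifold}). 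For $k\geq1$ the entries $f_k^{(i)}$ from~\eqref{e:fkj} involve $2kx_k^{(i)}$, terms of the form $Lx_{k\pm1}^{(j)}=L(x_{k+1}^{(j)}-x_{k-1}^{(j)})$, and the convolution term $L(x^{(1)}*x^{(2)})_{k\pm1}$. Here the key point is that $A_{i,j}$ for $3\leq i,j\leq 6$ has been defined so that its tail acts as $\frac{\delta_{i,j}}{2k}x_k^{(j-2)}$, which exactly cancels the $2k$ growth: when one forms $(AF(\beta_s,x))^{(i)}$ the dangerous $2k$-multiplied terms get divided by $2k$. So rather than needing $F^{(i)}(\beta_s,x)\in\ell^1_\nu$ outright, I would directly estimate $\|(AF(\beta_s,x))^{(i)}\|_{1,\nu}$. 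For the finite block $0\leq k\leq m-1$ this is a finite-dimensional linear combination and is trivially finite. For the tail $k\geq m$, $(AF(\beta_s,x))^{(i)}_k = \frac{1}{2k}f_k^{(i)}(\beta_s,x)$, and $\frac{1}{2k}f_k^{(i)} = x_k^{(i)} + \frac{L}{2k}[\text{shifted and convolution terms}]$; since $\|x^{(i)}\|_{1,\nu}<\infty$, and since the shift operator $x^{(j)}\mapsto x_{\pm1}^{(j)}$ is bounded on $\ell^1_\nu$ (norm at most $2$, using $\nu\geq1$), and the convolution $x^{(1)}*x^{(2)}$ lies in $\ell^1_\nu$ by the Banach algebra property $\|x^{(1)}*x^{(2)}\|_{1,\nu}\leq\|x^{(1)}\|_{1,\nu}\|x^{(2)}\|_{1,\nu}$, the whole tail is summable in the $\|\cdot\|_{1,\nu}$ norm. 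Meanwhile the first two output components $(AF(\beta_s,x))^{(i)}$, $i=1,2$, are finite real numbers because $A_{i,j}$ for $3\leq j\leq6$ is a (finitely supported) element of $(\ell^1_\nu)^*$ and the pairing $\langle A_{i,j}, F^{(j-2)}\rangle$ is bounded by Lemma~\ref{l:dualbound}, while $A_{i,1},A_{i,2}$ are scalars acting on the scalars $\eta^{(1)},\eta^{(2)}$.

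Assembling these estimates gives $AF(\beta_s,x)\in X$, hence $T(\beta_s,x)=x-AF(\beta_s,x)\in X$, which is the claim. The main obstacle — really the only non-routine point — is handling the tail $k\geq m$: one must see that the definition~\eqref{e:Adelta} of $A_{i,j}$ on the tail (the $\frac{\delta_{i,j}}{2k}$ factor) is precisely what tames the unbounded multiplication operator $x_k\mapsto 2kx_k$ appearing in $f_k^{(i)}$, so that the composition $AF$ is bounded on $\ell^1_\nu$ even though $F$ alone is not a bounded map into $\ell^1_\nu$ in the naive sense; everything else is absolute convergence of geometric-type series and the Banach algebra inequality from Lemma~\ref{LemmaNorm_cauchy}. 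I would present the tail computation in one display, noting that for $k\geq m$ and $i=3,\dots,6$,
\[
(AF(\beta_s,x))^{(i)}_k = \frac{1}{2k}f_k^{(i)}(\beta_s,x) = x_k^{(i)} + \frac{L}{2k}\bigl(\text{bounded terms}\bigr),
\]
and then bound the $\ell^1_\nu$ norm of the tail by $\|x^{(i)}\|_{1,\nu}$ plus a constant times $|L|$ times the relevant norms of $x^{(1)},\dots,x^{(4)}$, all of which are finite. This completes the verification that $T:\R\times X\to X$.
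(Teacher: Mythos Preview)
Your proposal is correct and takes essentially the same approach as the paper: the paper's proof is a one-sentence sketch pointing to the diagonal tail $\frac{\delta_{i,j}}{2k}$ in~\eqref{e:Adelta} as the key ingredient, and you have simply fleshed out that sketch in detail, correctly identifying that the $\frac{1}{2k}$ factor cancels the $2k$ growth in $f_k^{(i)}$ so that the composition $AF$ lands in $\ell^1_\nu$ even though $F$ alone does not. The only minor quibble is the shift-operator bound (the paper later uses $\nu+\nu^{-1}$ rather than $2$), but since only boundedness is needed here this does not affect the argument.
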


\begin{proof}
Consider $x \in X=\R^2 \times (\ell_\nu^1)^4$ and $\beta \in \R$.
By construction of $A$, in particular the infinite diagonal tail chosen in~\eqref{e:Adelta}, it is straightforward to verify that $\left( AF(\beta,x) \right)^{(i)} \in \mathbb{R}$ for $i=1,2$ and $\left( AF(\beta,x) \right)^{(i)} \in \ellnu$ for $3 \leq i \leq 6$. 
\end{proof}

Showing the existence of parameterized fixed points of $T$ defined in \eqref{eq:newton-like_homoclinic} is done by applying the general technique of Section~\ref{sec:radii_poly_approach}.
This requires computing the bounds $Y^{(j)}$ satisfying  \eqref{eq:Y_Bounds} and the bounds $Z^{(j)}$ satisfying \eqref{eq:Z_Bounds} for $j=1,\dots,6$. We recall that since 
$X = \prod_{j=1}^6 X_j = \mathbb{R}^2 \times (\ellnu)^4$, we have that $\|\cdot\|_{X_j}$ denotes the absolute value for $j=1,2$ and the $\ellnu$ norm for $j=3,4,5,6$.

%%%%%%%%%%%%%%%%%%%%%%%%%%%%%%%%%%%%%%%%%%%%%%
%%%%%%%%%%%%%%%%%%%% BVP PROOF %%%%%%%%%%%%%%%%%%%
%%%%%%%%%%%%%%%%%%%%%%%%%%%%%%%%%%%%%%%%%%%%%%

\subsection{The \boldmath $Y$ \unboldmath bound for the homoclinic orbit problem}
We recall the definition of the bounds $Y^{(j)}$ in \eqref{eq:Y_Bounds}. In our context, $Y^{(j)}$ is a bound satisfying 
\[
	\sup_{s \in [0,1]} \| (A F(\beta_s,\bx_{s}) )^{(j)} \|_{X_j} \leq Y^{(j)} .
\]

We begin by expanding each component of 
\[
F(\beta_s,\bx_{s}) = \left( \eta^{(1)}(\beta_s,\bx_{s}),\eta^{(2)}(\beta_s,\bx_{s}),F^{(1)}(\beta_s,\bx_{s}),\dots,F^{(4)}(\beta_s,\bx_{s}) \right)
\]
as a polynomial in $s$. Given $s \in [0,1]$ and $j=1,2,3,4$, denote $\bx_{s}^{(j)} = \left( \bx_{s,k}^{(j)} \right)_{k \ge 0}$.

First, $\eta^{(1)}(\beta_s,\bx_s) = S_0^{(1)} +sS_1^{(1)}$ and $\eta^{(2)}(\beta_s,\bx_s) = S_0^{(2)} +sS_1^{(2)}$, where
\begin{align}
S_0^{(1)} & \bydef \bx_{0,0}^{(2)} +2\sum_{k=1}^{m-1} (-1)^k \bx_{0,k}^{(2)}, \qquad S_1^{(1)} \bydef \Delta\bx_0^{(2)} +2\sum_{k=1}^{m-1}(-1)^k\Delta \bx^{(2)}_k, \label{e:S1}
\\
S_0^{(2)} & \bydef \bx_{0,0}^{(4)} +2\sum_{k=1}^{m-1} (-1)^k\bx_{0,k}^{(4)}, \qquad S_1^{(2)} \bydef \Delta\bx_0^{(4)} +2\sum_{k=1}^{m-1}(-1)^k\Delta \bx_k^{(4)}. \label{e:S2}
\end{align}
Let us now expand $F^{(1)}(\beta_s,\bx_{s}),\dots,F^{(4)}(\beta_s,\bx_{s})$ as polynomials in $s$, and recall that their first component depend on the exact parameterization of the stable manifold $P_{\beta_s}$ which involves infinitely many Taylor coefficients. The work from Section~\ref{Manifold} provides the existence of a function $h_{s}:D_{2,\tilde \nu}(\mathbb{R}^2) \to \R^4$, see~\eqref{hbeta}, such that
\[
P_{\beta_s}(\theta) = \overline P_{\beta_s}(\theta) + h_{s}(\theta).
\]
As before, we slightly abuse notation by denoting $P_{\beta_s}(\psi) = \overline P_{\beta_s}(\psi) + h_{s}(\psi)$, where $\theta_1 +\text{i}\theta_2 = \rho e^{\text{i} \psi}$ for a fixed $\rho < \tilde{\nu}$.
We then split the operator as
\begin{equation*}
F(\beta_s,\bx_{s})=F^{(N)}(\beta_s,\bx_{s}) +H_s(\overline{\psi}_s),
\end{equation*}
where $F^{(N)}$ denotes the full infinite dimensional $F$ operator but evaluated using the (finitely computable) approximation of the parameterization $\overline{P}_{\beta_s}$ of order $N$, and where
\begin{align*}
H_s(\psi) \bydef \left(
\begin{array}{c}
0 \\
0 \\
( h_{s}^{(1)}(\psi),0,0,\hdots ) \\
( h_{s}^{(2)}(\psi),0,0,\hdots ) \\
( h_{s}^{(3)}(\psi),0,0,\hdots ) \\
( h_{s}^{(4)}(\psi),0,0,\hdots )
\end{array} \right).
\end{align*}
The size of $h_{s}^{(i)}(\psi)$ can be estimated by $r_m$ using Proposition~\ref{prop:validation_manifold}, where $r_m$ is the validation radius for the manifold for $\beta_0 \leq \beta \leq \beta_1$. In addition, since we know that the zeroth order term in $h_{s}^{(i)}$ vanishes, i.e.~$a_0(s)=\overa_0(s)=0$, we obtain a slightly sharper bound
for any $\rho <\tilde{\nu}$:
\begin{alignat*}{1}
   | h_{s}^{(i)}(\psi) | & = 
   \biggl|  \sum_{|\alpha|=0}^\infty \bigl(a^{(i)}_\alpha(s)-\overa^{(i)}_\alpha(s)\bigr) \rho^{|\alpha|}  e^{{\rm i}\psi (\alpha_1-\alpha_2)}  \biggr|
   \leq   \sum_{|\alpha|=1}^\infty \bigl|a^{(i)}_\alpha(s)-\overa^{(i)}_\alpha(s)\bigr| \left( \frac{\rho}{\tilde{\nu}}\right)^{|\alpha|} \tilde{\nu}^{|\alpha|} \\
 & \leq 
    \frac{\rho}{\tilde{\nu}}   \sum_{|\alpha|=1}^\infty \bigl|a^{(i)}_\alpha(s)-\overa^{(i)}_\alpha(s)\bigr|  \tilde{\nu}^{|\alpha|} 
	=
   \frac{\rho}{\tilde{\nu}} \bigl\|a^{(i)}(s)-\overa^{(i)}(s) \bigr\|_{1,\tilde{\nu}} \leq    \frac{\rho}{\tilde{\nu}}  r_m.
\end{alignat*}
Hence, we can estimate $H_s(\overp_s)$ elementwise by
\[
   |H_s(\overp_s)| \leq \mu \bydef   
   \left(
     \begin{array}{c}
       0 \\ 
       0 \\ 
       (\frac{\rho}{\tilde{\nu}} r_m,0,0,\hdots) \\ 
       (\frac{\rho}{\tilde{\nu}} r_m,0,0,\hdots) \\ 
	   (\frac{\rho}{\tilde{\nu}} r_m,0,0,\hdots) \\ 
	   (\frac{\rho}{\tilde{\nu}} r_m,0,0,\hdots) 
     \end{array}
   \right) .
\]

Denoting
\[
F^{(N)} = \left( \eta^{(1)},\eta^{(2)},F^{(1,N)},F^{(2,N)},F^{(3,N)},F^{(4,N)} \right)
\]
with $F^{(j,N)} = \left\{ f_0^{(j,N)},f_1^{(j)},f_2^{(j)},f_3^{(j)},\dots \right\}$ for $=1,2,3,4$, we rewrite $f_{0}^{(j,N)}(\beta_s,\bx_s)$ as a polynomial in $s$,
where we use $\Delta \overline{P}$ as defined in~\eqref{e:defDeltaP}:
\begin{align}
    f_{0}^{(j,N)}(\beta_s,\bx_s) &=  \bx_{0,0}^{(j)} +s\Delta\bx_0^{(j)} +2\sum_{k=1}^{m-1}\big[ \bx_{0,k}^{(j)} +s\Delta \bx_k^{(j)}\big] 
    -\overline{P}_{\beta_s}^{(j)}(\overp_s)  \nonumber  \\
    & = \left( \bx_{0,0}^{(j)} +2\sum_{k=1}^{m-1} \bx_{0,k}^{(j)} -\overline{P}_{\beta_0}^{(j)}(\overp_s)\right)
    +s\left(\Delta\bx_0^{(j)} +2\sum_{k=1}^{m-1} \Delta\bx_k^{(j)} -\Delta \overline{P}^{(j)}(\overp_s)\right)  \nonumber \\
    & = \left( \bx_{0,0}^{(j)} +2\sum_{k=1}^{m-1} \bx_{0,k}^{(j)} -\overline{P}_{\beta_0}^{(j)}(\overp_0)\right)  \nonumber  \\
    & \qquad +s\left(\Delta \bx_0^{(j)} +2\sum_{k=1}^{m-1} \Delta \bx_k^{(j)} - \Delta \overline{P}^{(j)}(\overp_0) - \Delta \overp \frac{d}{d \psi}\overline{P}_{\beta_0}^{(j)}(\xi)\right) - s^2 \Delta \overp \frac{d}{d \psi} \Delta \overline{P}^{(j)}(\zeta) 
	\nonumber \\
    & \bydef  S_{0,0}^{(j+2)} +s S_{1,0}^{(j+2)}+ s^2 S_{2,0}^{(j+2)},
	\label{e:defS0}
\end{align}
for some $\xi,\zeta $ between $\overp_0$ and $\overp_1$ (using the mean value theorem).
To obtain an explicit computable expression for $S_{1,0}^{(j+2)}$ and $S_{2,0}^{(j+2)}$, we determine 
\begin{alignat}{1}
  \frac{d}{d \psi}\overline{P}_{\beta_0}^{(j)}(\xi) 
  &=  
  \text{i} \sum_{|\alpha|< N} \overa_{0,\alpha} (\alpha_1-\alpha_2) \rho^{|\alpha|}e^{\text{i}\xi(\alpha_1-\alpha_2)}, \nonumber \\
\frac{d}{d \psi} \Delta \overline{P}^{(j)}(\zeta) 
  &=
 \text{i} \sum_{|\alpha|< N} \Delta \overa_{\alpha} (\alpha_1-\alpha_2) \rho^{|\alpha|}e^{\text{i}\zeta(\alpha_1-\alpha_2)}, \label{e:dDeltaPdpsi}
\end{alignat}
by an interval arithmetic calculation, i.e., replacing $\xi$ and $\zeta$
by the interval $[\overp_0,\overp_1]$.

For $k \ge 1$, we set ($j=1,2,3,4$)
\begin{equation}\label{e:fkexpandedins}
	f_{k}^{(j)}(\beta_s,\bx_s)= S_{0,k}^{(j+2)}+ S_{1,k}^{(j+2)} s +S_{2,k}^{(j+2)}s^2+S_{3,k}^{(j+2)}s^3,
\end{equation} 
where the third order term is nonzero for $j=1$ and $j=4$ only. All terms are collected in Table \ref{Table:CoeffS}. Then, it is possible to write the whole operator as
\begin{equation}\label{splitted}
F(\beta_s,\bx_{s})=S_0+ sS_1 +s^2S_2 +s^3 S_3 + H_s(\overline{\psi}_s), 
\end{equation}
where $S_i=( S_i^{(1)},S_i^{(2)}, \lbrace S_{i,k}^{(3)} \rbrace_{k\geq0}, \lbrace S_{i,k}^{(4)}\rbrace_{k\geq0}, \lbrace S_{i,k}^{(5)}\rbrace_{k\geq0}, \lbrace S_{i,k}^{(6)}\rbrace_{k\geq0} )$ for $i=0,1,2,3$. 

%
%
% TABLE OF COEFFICIENTS OF THE S_{i,k}^{(j)}
%
%
\begin{table}[ht!]\label{Table:CoeffS}
\centering
\begin{tabular}{|c|c|}
\hline
$i$ & Coefficients $S_{i,k}^{(3)}$ for $k \geq 1$ \\
\hline
$0$ & $2k \bx_{0,k}^{(1)} +\overL_0\left[\bx_{0,k\pm1}^{(2)} +(\bx_0^{(1)}*\bx_0^{(2)})_{k\pm1} \right]$ \\
$1$ & $2k \Delta \bx^{(1)}_k +\overL_0\left[ \Delta \bx^{(2)}_{k\pm1} +(\Delta \bx^{(1)}\ast\bx_0^{(2)})_{k\pm1} +(\bx_0^{(1)}\ast\Delta \bx^{(2)})_{k\pm1}\right]$ \\ 
 & $+\Delta\overL\left[\bx^{(2)}_{0,k\pm1} +(\bx_0^{(1)}\ast \bx_0^{(2)})_{k\pm1}\right]$ \\
$2$ & $\overL_0 (\Delta \bx^{(1)} \ast \Delta\bx^{(2)})_{k\pm1} + \Delta\overL\left[ \Delta\bx^{(2)}_{k\pm1}+(\Delta\bx^{(1)} \ast \bx_0^{(2)})_{k\pm1} +(\bx_0^{(1)}\ast\Delta\bx^{(2)})_{k\pm1}\right]$\\ 
$3$ & $\Delta \overL(\Delta\bx^{(1)} \ast \Delta\bx^{(2)})_{k\pm1}$ \\
\hline
$i$ & Coefficients $S_{i,k}^{(4)}$ for $k \geq 1$ \\
\hline
$0$ & $2k\bx_{0,k}^{(2)} +\overL_0 \bx^{(3)}_{0,k\pm1}$ \\
$1$ & $\overL_0\Delta\bx^{(3)}_{k\pm1} +2k\Delta\bx^{(2)}_k+ \Delta \overL \bx^{(3)}_{0,k\pm1}$ \\
$2$ & $\Delta \overL\Delta\bx^{(3)}_{k\pm1}$ \\
$3$ & $0$ \\
\hline
$i$ & Coefficients $S_{i,k}^{(5)}$ for $k \geq 1$ \\
\hline
$0$ & $2k\bx_{0,k}^{(3)} +\overL_0\bx^{(4)}_{0,k\pm1}$ \\
$1$ & $\overL_0\Delta\bx^{(4)}_{k\pm1} +2k\Delta\bx^{(3)}_k+ \Delta\overL\bx^{(4)}_{0,k\pm1}$ \\
$2$ & $\Delta \overL\Delta\bx^{(4)}_{k\pm1}$ \\
$3$ & $0$ \\
\hline
 $i$ & Coefficients $S_{i,k}^{(6)}$ for $k \geq 1$ \\
\hline
$0$ & $2k\bx_{0,k}^{(4)} - \overL_0\left[\bx^{(1)}_{0,k\pm 1} + \beta_0\bx^{(3)}_{0,k\pm1}\right]$ \\
$1$ & $2k\Delta\bx^{(4)}_k - \Delta \overL \left[\bx^{(1)}_{0,k\pm1}+ \beta_0 \bx^{(3)}_{0,k\pm1}\right] - \overL_0\left[\Delta \bx^{(1)}_{k\pm1}+\beta_0 \Delta\bx^{(3)}_{k\pm1} +\Delta\beta\bx^{(3)}_{0,k\pm1}\right]$ \\
$2$ & $- \Delta \overL \left[\Delta\bx^{(1)}_{k\pm1}+\beta_0 \Delta\bx^{(3)}_{k\pm1}+\Delta\beta\bx^{(3)}_{0,k\pm1}\right] -\overL_0\Delta\beta \Delta\bx^{(3)}_{k\pm1}$ \\ 
$3$ & $ - \Delta \overL \Delta\beta \Delta \bx^{(3)}_{k\pm1}$ \\
\hline
\end{tabular}
\caption{Coefficients $S_{i,k}^{(j)}$ for the splitting of $F(\beta_s,\bx_{s})$ as a polynomial in $s$, as given in \eqref{splitted}.
The coefficients $S_{i,k}^{(j)}$ for $k=0$ and $3 \leq j \leq 6$ are provided in~\eqref{e:defS0}.
The coefficients $S_{i,k}^{(j)}$ for $j=1,2$ are provided in~\eqref{e:S1} and~\eqref{e:S2}.}
\end{table}

Since we evaluate $F$ using a finite dimensional approximation, $F(\beta_s,\bx_s)$ will contain only a finite number of nonzero elements. First, we consider the entries not exceeding the dimension of the finite dimension approximation. This part gets `hit' by $A^{[m]}$,
and is bounded component by component:
\begin{align*}
\bigl|A^{[m]} F (\beta_s,\bx_{s})^{[m]} \bigr|  \leq V \bydef &
\bigl|A^{[m]}S_0^{[m]}\bigr| +
\bigl|A^{[m]}S_1^{[m]}\bigr| +
\bigl|A^{[m]}S_2^{[m]}\bigr| + 
\bigl|A^{[m]}S_3^{[m]}\bigr| +\bigl| A^{[m]} \bigr| \mu^{[m]}  ,
 \end{align*}  
where
$V=(V^{(1)},V^{(2)},\{V^{(3)}_k\}_{k=0}^{m-1},\{V^{(4)}_k\}_{k=0}^{m-1},\{V^{(5)}_k\}_{k=0}^{m-1},\{V^{(6)}_k \}_{k=0}^{m-1})^T$.
Concerning terms that exceed the dimension of the finite dimensional projection,
by using the definition of $A$, one gets for $j=3,4,5,6$
\[
  \left|\big(A_{j,j}F^{(j-2)}(\beta_s,\bx_s)\big)_k\right| = \left|\frac{1}{2k}f_{k}^{(j-2)}(\beta_s,\bx_s)\right|
  \qquad\text{for } k \geq m.
\]
The expansion of $f_{k}^{(j-2)}(\beta_s,\bx_s)$ in powers of $s$ is given by~\eqref{e:fkexpandedins} with the coefficients in Table~\ref{Table:CoeffS}, We note that all $S^{(j)}_{i,k}$ vanish for $k \geq 2m$. 
To be precise, $S^{(3)}_{i,k}$ vanishes for $k \geq 2m$, whereas  when $j=4,5,6$ then
$S^{(j)}_{i,k}$ vanishes already for $k \geq m+1$. Hence we define the estimates ($3 \leq j \leq 6$, $m\leq k\leq 2m-1$)
\[
  \frac{1}{2k} \left|f_{k}^{(j-2)}(\beta_s,\bx_s)\right| 
  \leq W^{(j)}_k \bydef \frac{1}{2k} \left(
   | S_{0,k}^{(j)} | + |S_{1,k}^{(j)}| +  
   |S_{2,k}^{(j)} | + |S_{3,k}^{(j)}| \right) .
\]

Having estimated all the terms appearing in the expression $\|(T(\beta_s,\bx_s)-\bx_s)^{(j)}\|_{X_j}$, we set
\begin{equation*} 
Y^{(j)} \bydef \begin{cases} V^{(j)}, &\qquad j=1,2, \\
\displaystyle V_0^{(j)} +2\sum_{k=1}^{m-1} V_k^{(j)}\nu^k +2\sum_{k=m}^{2m-1} W_k^{(j)} \nu^k, &\qquad j=3,4,5,6.      
\end{cases}
\end{equation*}
By construction, we have
\begin{align*}
\left\|(T(\beta_s,\bx_s) - \bx_s)^{(j)}\right\|_{X_j} \leq Y^{(j)} 
\qquad \text{for all } s\in [0,1] \text{ and }  j=1,\hdots,6.
\end{align*}
%

%%%%%%%%%%%%%%%%%%%%%%%%% Z BOUND %%%%%%%%%%%%%%%%%%%%%%

\subsection{The \boldmath $Z$ \unboldmath bound for the homoclinic orbit problem}

We recall the definition of the bounds $Z^{(j)}$ in \eqref{eq:Z_Bounds}. In our context, $Z^{(j)}$ is a bound satisfying 
\[
 \sup_{\substack{b,c \in B_r(0) \\ s \in[0,1]}} \left\| D_xT^{(j)}(\beta,\bx_s +b)c \right\|_{X_j}   \leq Z^{(j)}(r).
\]

To simplify the manipulations of the expressions appearing in the bounds, we introduce an operator $A^\dagger = \lbrace A^\dagger_{i,j} | 1\leq i,j\leq 6 \rbrace$, where the splitting is explained in Definition~\ref{def:OperatorA}.
This operator $A^\dagger$ is on the one hand an `almost inverse' of the operator $A$, and on the other hand it approximates $D_x F(\beta_0,\bx_0)$.
We define $A^\dagger$ piecewise, where we use the decomposition of the Jacobian
$(D_x\overline{F}(\beta_0,\bx_0)) =(D_x\overline{F}(\beta_0,\bx_0)) _{i,j}$ into 36~blocks as in~\eqref{e:blocks}:
\begin{itemize}
  \item $A_{i,j}^\dagger\in \mathbb{R}$ for $1 \leq i,j \leq 2$, where 
  $A_{i,j}^\dagger = (D_x\overline{F}(\beta_0,\bx_0))_{i,j}$;
  \item $A_{i,j}^\dagger \in  (\ellnu)^{*}$ for $1 \leq i \leq 2$ and $3 \leq j \leq 6$, where
 $A_{i,j}^\dagger$ is $(D_x\overline{F}(\beta_0,\bx_0))_{i,j}$ padded with zeros;
  \item $A_{i,j}^\dagger \in  \ellnu$ for $3 \leq i \leq 6$ and $1 \leq j \leq 2$, where $A_{i,j}^\dagger$ is $(D_x\overline{F}(\beta_0,\bx_0))_{i,j}$ padded with zeros;
  \item $A_{i,j}^\dagger \in B(\ellnu,\ell_{\nu'}^1)$ for $3 \leq i,j \leq 6$, with $\nu' < \nu$, where
\[
	(A_{i,j}^\dagger x^{(j-2)})_k= \begin{cases} 
	\Bigl((D_x\overline{F}(\beta_0,\bx_0))_{i,j}(x^{(j-2)})^{[m]}\Bigr)_k & \quad \mbox{if}~ 0 \le k \leq m-1, \\[2mm]
	\delta_{i,j} 2k x_k^{(j-2)} &\quad \mbox{if}~ k\geq m .
	\end{cases}
\]
\end{itemize}
Now, we use $A^\dagger$ to perform the splitting
\begin{align}
\nonumber DT(\beta_s,\bx_s+b)c &= [I-ADF(\beta_s,\bx_s +b)]c \\ 
&= [I-AA^\dagger]c - A[DF(\beta_s,\bx_s +b)c -A^\dagger c]. \label{separation2}
\end{align}
As in Section~\ref{Manifold}, the bound on the first term in \eqref{separation2} can be directly computed. We set $B=I-AA^\dagger$, whose nonzero elements are represented by the finite matrix
$I_{4m+2}-A^{[m]}D_x\overline{F}(\beta_0,\bx_0)$, and we use Lemmas~\ref{l:dualbound} and~\ref{l:Blnu1norm} to derive the bounds 
\begin{equation}\label{e:Z0}
Z_0^{(i)}\bydef 
\begin{cases}
\displaystyle \sum_{j=1}^2 |B_{i,j}| 
+ \sum_{j=3}^6  \| B_{i,j} \|_{\infty,\nu^{-1}}  &\quad\text{for } i=1,2, \\
\displaystyle \sum_{j=1}^2 \|B_{i,j}\|_{1,\nu} 
+ \sum_{j=3}^6  \| B_{i,j} \|_{B(\ell^1_\nu)} &\quad\text{for }  i=3,4,5,6,
\end{cases}
\end{equation}
with the norms introduced in Section~\ref{s:setupcheb}.
This provides the desired bound on the first term of \eqref{separation2}. For the second term, we set $u,v \in B_1(0)$ such that $b=ru$ and $c=rv$. We denote 
$v=(v_{L},v_{\psi},v^{(1)},v^{(2)},v^{(3)},v^{(4)})$, and similarly for $u$, $b$ and $c$. First, for $i=1,2$, we have
\begin{align}\label{Boundz}
   \nonumber [DF(\beta_s,\bx_s +b)c-A^\dagger c]^{(i)} &= \left| c^{(2i)}_0 +2\displaystyle\sum_{k=1}^\infty (-1)^k c^{(2i)}_k -c^{(2i)}_0 -2 \sum_{k=1}^{m-1} (-1)^k c^{(2i)}_k  \right| \\
&= \left| \displaystyle  2\sum_{k=m}^{\infty} (-1)^k c^{(2i)}_k \right|\leq 2\displaystyle\sum_{k=m}^{\infty}|v^{(2i)}_k|r\leq\frac{1}{\nu^m}r,
\end{align}
where the final inequality follows from Lemma~\ref{l:dualbound}. 
Next we consider the $k=0$ term of the other four components.
For $i=1,2,3,4$ one finds
\begin{align}
 \left|[DF(\beta_s,\bx_s +b)c-A^\dagger c]^{(i+2)}_0 \right| \hspace*{-2.5cm} & \nonumber\\
      =\, &\left| \bigg[ \displaystyle-\frac{dP_{\beta_s}^{(i)}}{d \psi}  (\overline{\psi}_s+b_\psi) 
	 c_\psi + c^{(i)}_0+ \displaystyle 2\sum_{k=1}^\infty c^{(i)}_k \bigg]
	 -\bigg[ -\frac{d \overline{P}_{\beta_0}^{(i)}}{d \psi}(\overline{\psi}_0) 
c_\psi +c^{(i)}_0 + 2\displaystyle\sum_{k=1}^{m-1} c^{(i)}_k \bigg] \right| \nonumber\\
    \leq\, & \left| \frac{d \overline{P}_{\beta_0}^{(i)}}{d\psi}(\overline{\psi}_s+r u_\psi) 
	-\frac{d\overline{P}_{\beta_0}^{(i)}}{d\psi}(\overline{\psi}_s) 
	+\frac{d\overline{P}_{\beta_0}^{(i)}}{d\psi}(\overline{\psi}_s) 
-\frac{d\overline{P}_{\beta_0}^{(i)}}{d\psi}(\overline{\psi}_0)
	+s\frac{d\Delta \overline{P}^{(i)}}{d\psi}(\overline{\psi}_s+b_\psi) 
	\right| r \nonumber\\
	 &\quad+ \left( \left| \frac{d h_s}{d \psi}(\overline{\psi}_s+b_\psi) \right| + \left| 2\displaystyle\sum_{k=m}^{\infty} v^{(i)}_k \right|
	  \right)r \nonumber \\
    \leq\, &
	   \left| \frac{d^2 \overline{P}_{\beta_0}^{(i)}}{d \psi^2}(\zeta_s) \right| r^2
	  +	\left( \left| \frac{d^2 \overline{P}_{\beta_0}^{(i)}}{d \psi^2}(\xi_s) \Delta \overp \right| 
	  +\left|\frac{d \Delta \overline{P}^{(i)}}{d \psi}(\overline{\psi}_s+b_\psi)  \right| \right) s r 
	  \label{eq:TermForMVT}\\
	 &\quad+ \left( \rho\left| \frac{\partial h_s}{\partial \theta_1}(\overline{\psi}_s+b_\psi) \right| +\rho\left| \frac{\partial h_s} {\partial \theta_2}(\overline{\psi}_s+b_\psi)\right| 
	  +\frac{1}{\nu^m} \right)r, \label{eq:dhsdtheta}
\end{align}
where $\zeta_s$ is in $[\overp_s-r,\overp_s+r]$, and $\xi_s$ is in $[\overp_0,\overp_s]$.
A direct computation shows that
\begin{align*}
  \left| \frac{d^2 \overline{P}_{\beta_0}^{(i)}}{d \psi^2}(\psi) \right| &
= \left| \sum_{|\alpha|<N} -\overa_{0,\alpha}^{(i)} \rho^{|\alpha|} (\alpha_1-\alpha_2)^2 e^{\text{i}\psi(\alpha_1-\alpha_2)} \right| 
  \leq \sum_{|\alpha|< N} \left|\overa_{0,\alpha}^{(i)}\right| \rho^{|\alpha|} (\alpha_1-\alpha_2)^2 . 
\end{align*}
Combining this with~\eqref{e:dDeltaPdpsi} gives us a bound on the terms in~\eqref{eq:TermForMVT}: 
\begin{alignat*}{1}
 \Lambda^{(i)} &\bydef |\Delta \overp| \sum_{|\alpha|< N} \left|\overa_{0,\alpha}^{(i)} \right| \rho^{|\alpha|} (\alpha_1-\alpha_2)^2  + \sum_{|\alpha|< N} \left|\Delta \overa_{0,\alpha}^{(i)} \right| \rho^{|\alpha|} |\alpha_1-\alpha_2| , \\
 \tilde{\Lambda}^{(i)} &\bydef \sum_{|\alpha|< N} \left|\overa_{0,\alpha}^{(i)} \right| \rho^{|\alpha|} (\alpha_1-\alpha_2)^2 .
\end{alignat*}
The remaining terms in~\eqref{eq:dhsdtheta} are estimated using Lemma~\ref{bornederive}.
We obtain, for $i=1,2,3,4$,
\begin{align}\label{Boundz0}
    \left|[DF(\beta_s,\bx_s +b)c-A^\dagger c]^{(i+2)}_0 \right| 
    \leq \mathcal{W}^{(i+2)}_1 r + \tilde{\Lambda}^{(i)} r^2  ,
\end{align}
with, for $j=3,4,5,6$,
\begin{equation}\label{e:W1}
  \mathcal{W}^{(j)}_1  \bydef \left(\Lambda^{(j-2)} + \frac{8\pi\rho r_m}{\tilde{\nu} \ln \frac{\tilde{\nu}}{\rho}}+\frac{1}{\nu^m} \right) .
\end{equation}
%%%
%%%
%%% Bound z_1
%%%
%%%

For $k\neq 0$, we consider separately the coefficients of $r$, $r^2$ and $r^3$: 
\begin{align*}
\left(DF(\beta_s,\bx_s +ru)rv-A^\dagger rv \right)_k^{(i)} = \tilde{z}_{1,k}^{(i)}r + \tilde{z}_{2,k}^{(i)}r^2 + \tilde{z}_{3,k}^{(i)}r^3,
\qquad\text{for } i=3,4,5,6.
\end{align*}
The term $ -A^\dagger v$ contributes to the $s$-independent part of
$\tilde{z}_{1,k}^{(i)}$ only. Since in $\tilde{z}_{1,k}^{(i)}$ some of the terms involving $(v^{(j)})^{[m]}$ will cancel, it is useful to introduce
$\hv^{(j)}$ as follows:
\[
  \hv^{(j)}_k \bydef \begin{cases} 0 & \text{if } k<m,\\
  v^{(j)}_k & \text{if } k \geq m. \end{cases}
\]
Using this notation, for $\tilde{z}_{1,k}^{(3)}$ and $1 \leq k \leq m-1$, one finds
%
%
% expansion of z tildes
%
%
\begin{align*}
    \tilde{z}_{1,k}^{(3)}&= 
    \overL_0\left[(\bx_0^{(1)}\ast \hv^{(2)})_{k\pm1} +(\hv^{(1)}\ast \bx_0^{(2)})_{k\pm1} + \delta_{k,m-1} \hv_{k\pm1}^{(2)}\right] \\
    &\qquad +s\bigg(v_L \left[ \Delta \bx^{(2)}_{k\pm1}  + (\bx^{(1)}_0\ast \Delta\bx^{(2)})_{k\pm1} + (\Delta\bx^{(1)} \ast \bx^{(2)}_0)_{k\pm1}\right]
	 \\
    &\qquad +\overL_0\left[ (\Delta \bx^{(1)} \ast v^{(2)})_{k\pm1} +(v^{(1)}\ast \Delta \bx^{(2)})_{k\pm1}\right] +\Delta\overL \left[  v_{k\pm1}^{(2)} + (\bx_0^{(1)}\ast v^{(2)})_{k\pm1} +(v^{(1)}\ast \bx_0^{(2)})_{k\pm1}\right]\bigg)\\
    &\qquad + s^2\left(\Delta\overL \big[(\Delta \bx^{(1)} \ast v^{(2)})_{k\pm1}+ (v^{(1)}\ast\Delta \bx^{(2)})_{k\pm1}\big]+v_L(\Delta \bx^{(1)}\ast \Delta \bx^{(2)})_{k\pm1} \right) . 
\end{align*}
Clearly $\delta_{k,m-1} \hv_{k\pm1}^{(2)} = \delta_{k,m-1} v_{k+1}^{(2)} =\hv_{k\pm1}^{(2)} $, for $k \leq m-1$, and
the Kronecker $\delta_{k,m-1}$ may be viewed as superfluous.
For $k \geq m$ we find
\begin{align*}
\tilde{z}_{1,k}^{(3)}&= 
    \overL_0\left[(\bx_0^{(1)}\ast v^{(2)})_{k\pm1}+(v^{(1)}\ast \bx_0^{(2)})_{k\pm1}+ v_{k\pm1}^{(2)}\right] +v_L\left[(\bx_0^{(1)}\ast \bx_0^{(2)})_{k\pm1} +\delta_{k,m}\bx^{(2)}_{0,k\pm1}\right]\\
    &\qquad +s\bigg(
	v_L \left[\delta_{k,m}\Delta \bx^{(2)}_{k\pm1}+ (\bx_0^{(1)}\ast\Delta\bx^{(2)})_{k\pm1} +(\Delta\bx^{(1)}\ast \bx_0^{(2)})_{k\pm1}\right]
 \\
    &\qquad +
	\overL_0 \left[(\Delta \bx^{(1)}\ast v^{(2)})_{k\pm1}+(v^{(1)}\ast\Delta\bx^{(2)})_{k\pm1}\right]+\Delta\overL\left[v_{k\pm1}^{(2)} +(\bx_0^{(1)}\ast v^{(2)})_{k\pm1}+(v^{(1)}\ast \bx_0^{(2)})_{k\pm1}\right]
	\bigg) \\
    &\qquad + s^2\bigg(\Delta\overL \left[(\Delta \bx^{(1)} \ast v^{(2)})_{k\pm1} +(v^{(1)} \ast\Delta\bx^{(2)})_{k\pm1}\right] +v_L(\Delta \bx^{(1)}\ast \Delta \bx^{(2)})_{k\pm1}\bigg).
\end{align*}
Once again the Kronecker $\delta_{k,m}$ may be viewed as superfluous.
%
% end of first case
%
%
For $\tilde{z}_{4,k}^{(1)}$, $\tilde{z}_{5,k}^{(1)}$ and $\tilde{z}_{6,k}^{(1)}$, one finds
\begin{alignat*}{1}
\tilde{z}_{1,k}^{(4)} & =\begin{cases}
\delta_{k,m-1} \overL_0 v^{(3)}_{k+1} +
  s\big[\Delta\overL v^{(3)}_{k\pm1} + v_L \Delta \bx^{(3)}_{k\pm1}\big] & \quad\text{for } 1\leq k\leq m-1 \\
  % %
  %
  \overL_0 v^{(3)}_{k\pm1}+ v_L \delta_{k,m}\bx^{(3)}_{0,k\pm1} +s\big[\Delta\overL v^{(3)}_{k\pm1} + \delta_{k,m} v_L \Delta\bx^{(3)}_{k\pm1}\big]& \quad\text{for } k \geq m,
\end{cases} 
\\[1mm]
\tilde{z}_{1,k}^{(5)}& =\begin{cases}
 \delta_{k,m-1} \overL_0 v^{(4)}_{k+1}+  s\big[\Delta\overL v^{(4)}_{k\pm1} + v_L \Delta \bx^{(4)}_{k\pm1}\big] & \quad\text{for } 1\leq k\leq m-1 \\
  \overL_0 v^{(4)}_{k\pm1} +v_L\delta_{k,m}\bx^{(4)}_{0,k\pm1} +s\big[\Delta\overL v^{(4)}_{k\pm1} +\delta_{k,m} v_L \Delta\bx^{(4)}_{k\pm1}\big]& \quad\text{for } k \geq m,
\end{cases}
\end{alignat*}
and 
\begin{equation*}
\tilde{z}_{1,k}^{(6)}=\begin{cases}
    - \delta_{k,m-1} \overL_0\big[ \beta_0 v^{(3)}_{k+1} +v^{(1)}_{k+1}\big]
    - s\bigg(v_L \big[\Delta\beta \bx^{(3)}_{0,k\pm1} +\Delta\bx^{(1)}_{k\pm1}+\beta_0\Delta\bx^{(3)}_{k\pm1}\big]    \\ 
    \qquad +\Delta\overL v^{(1)}_{k\pm1} + \big[\overL_0 \Delta\beta +\Delta \overL \beta_0\big] v^{(3)}_{k\pm1}\bigg) - s^2\big[
	\Delta\overL\Delta\beta v^{(3)}_{k\pm1} + v_L\Delta\beta \Delta \bx^{(3)}_{k\pm1}\big]& \quad\text{for } 1\leq k\leq m-1 \\[5mm]
    % %
    - \overL_0\big[ \beta_0 v^{(3)}_{k\pm1} +v^{(1)}_{k\pm1}\big]
    - \delta_{k,m}v_L\big[\beta_0 \bx^{(3)}_{0,k\pm1} +\bx^{(1)}_{0,k\pm1}\big] \\
    \qquad- s\bigg(\delta_{k,m}v_L\big[\Delta\beta\bx^{(3)}_{0,k\pm1} +\Delta\bx^{(1)}_{k\pm1}+ \beta_0 \Delta\bx^{(3)}_{k\pm1}\big] +\Delta\overL v^{(1)}_{k\pm1}  \\ 
    \qquad\qquad + \big[\overL_0 \Delta\beta +\Delta \overL \beta_0\big] v^{(3)}_{k\pm1}\bigg) - s^2\big[\Delta\overL \Delta\beta v^{(3)}_{k\pm1} +\delta_{k,m} v_L\Delta\beta\Delta\bx^{(3)}_{k\pm1}\big]& \quad\text{for } k \geq m.
\end{cases}
\end{equation*}
The $\tilde{z}_{i,k}^{(2)}$ and $\tilde{z}_{i,k}^{(3)}$ coefficients are still to be determined. For $k\neq 0$, they are given in Table~\ref{table:Coeff}.
\begin{table}[h]
\centering
\begin{tabular}{|c|l|}
\hline
\multicolumn{2}{|c|}{Coefficients in front of $r^2 $, for $k \geq 1$} \\
\hline
    $\tilde{z}_{2,k}^{(3)}$ 
    &
    $ v_L \big( (\bx^{(1)}_0\ast u^{(2)})_{k\pm1}+ (u^{(1)} \ast \bx^{(2)}_0)_{k\pm1} + s[(\Delta \bx^{(1)} \ast u^{(2)})_{k\pm1}+(u^{(1)}\ast\Delta \bx^{(2)})_{k\pm1}]+u^{(2)}_{k\pm1}\big)$\\ 
    & 
    $+ u_L \big( (\bx^{(1)}_0 \ast v^{(2)})_{k\pm1} +(v^{(1)} \ast \bx^{(2)}_0)_{k\pm1} +s[(\Delta \bx^{(1)} \ast v^{(2)})_{k\pm1}+(v^{(1)} \ast\Delta \bx^{(2)})_{k\pm1}] +v^{(2)}_{k\pm1}\big) $ \\
    &
    $+ s\Delta\overL \big( (u^{(1)} \ast v^{(2)})_{k\pm1}+ (v^{(1)} \ast u^{(2)})_{k\pm1}\big) +\overL_0\big((u^{(1)}\ast v^{(2)})_{k\pm1}+ (v^{(1)}\ast u^{(2)})_{k\pm1}\big) $ \\
\hline
    $\tilde{z}_{2,k}^{(4)}$ 
    &
    $ u_L v^{(3)}_{k\pm1}  + v_L u^{(3)}_{k\pm1}$\\
\hline
    $\tilde{z}_{2,k}^{(5)}$ 
    &
    $ u_L v^{(4)}_{k\pm1}  + v_L u^{(4)}_{k\pm1}$\\
\hline
    $\tilde{z}_{2,k}^{(6)}$ 
    &
    $- u_L \big( \beta_s v^{(3)}_{k\pm1} +v^{(1)}_{k\pm1} \big) - v_L \big( \beta_s u^{(3)}_{k\pm1} +u^{(1)}_{k\pm1} \big)$\\
\hline
\multicolumn{2}{|c|}{Coefficients in front of $r^3 $, for $k \geq 1$} \\
\hline
    $\tilde{z}_{3,k}^{(3)}$ 
    & 
    $u_L (u^{(1)}\ast v^{(2)})_{k\pm1} +u_L (u^{(2)}\ast v^{(1)})_{k\pm1} +v_L (u^{(1)}\ast u^{(2)})_{k\pm1}$\\
\hline
    $\tilde{z}_{3,k}^{(4)}$ & \\
    $\tilde{z}_{3,k}^{(5)}$ & $0$\\
    $\tilde{z}_{3,k}^{(6)}$ & \\
\hline
\end{tabular}
\caption{Coefficients $\tilde{z}_{2,k}^{(i)}$ and $\tilde{z}_{3,k}^{(i)}$ for $k \neq 0$.}\label{table:Coeff}
\end{table}
Thus, we set $\tilde{z}_1^{(i)}=\lbrace \tilde{z}_{1,k}^{(i)} \rbrace_{k \geq 0}$, $\tilde{z}_2^{(i)}=\lbrace \tilde{z}_{2,k}^{(i)} \rbrace_{k \geq 0}$ and $\tilde{z}_3^{(i)}=\lbrace \tilde{z}_{3,k}^{(i)} \rbrace_{k \geq 0}$. 
We note that values of $\tilde{z}_{1,0}^{(i)}$ and $\tilde{z}_{2,0}^{(i)}$  are not explicitly given, but \eqref{Boundz0} provides bounds on these terms.
We are going to abuse notation by referring to these bounds as $\tilde{z}_{1,0}^{(i)}$ and $\tilde{z}_{2,0}^{(i)}$, where we will correct for this abuse below whenever these terms get involved.
We set $\tilde{z}_{3,0}^{(i)}=0$. 

For $l=1,2$, one can estimate, using Equation~\eqref{Boundz} and the definition of $\tilde{z}^{(i)}_j$,
\begin{alignat}{1}
	 \left| \left( A[DF(\beta_s,\bx_s +b)c - A^\dagger c] \right)^{(l)} \right| \leq \hspace*{-3cm} \nonumber \\
&	\displaystyle\left(\sum_{i=1}^2 \frac{\left|A_{l,i}\right|}{\nu^m} 
	  +\sum_{i=3}^6 \left|A_{l,i}\tilde{z}_1^{(i)} \right|\right)r 	   +\sum_{i=3}^6\|A_{l,i}\|_{\infty,\nu^{-1}} \left(\|\tilde{z}_2^{(i)}\|_{1,\nu} r^2  +\|\tilde{z}_3^{(i)}\|_{1,\nu}r^3 \right), \label{eq:splittingA12}
\end{alignat}
and for $l=3,4,5,6$
\begin{alignat}{1}
    \left\|\left( A[DF(\beta_s,\bx_s +b)c -A^\dagger c] \right)^{(l)} \right\|_{1,\nu} \leq \hspace*{-4cm} \nonumber \\ 
	&\displaystyle \left(\sum_{i=1}^2  \frac{\left\|A_{l,i} \right\|_{1,\nu}}{\nu^m} +\sum_{i=3}^6 \left\|A_{l,i}\tilde{z}_1^{(i)} \right\|_{1,\nu}\right)r
	+\sum_{i=3}^6 \left\|A_{l,i}\right\|_{B(\ellnu)}\left( \|\tilde{z}_2^{(i)}\|_{1,\nu} r^2 +\|\tilde{z}_3^{(i)}\|_{1,\nu} r^3\right). \label{eq:splittingA36}
\end{alignat}
Apart from $|A_{l,i}|$ for $l,i=1,2$, which are scalars, it is not immediately obvious how to compute or estimate the terms in~\eqref{eq:splittingA12} and~\eqref{eq:splittingA36} explicitly.
% This will be discussed below.
%
The norms $\|A_{l,i}\|_{\infty,\nu^{-1}}$ for $l=1,2$, $i=3,4,5,6$ and 
$\|A_{l,i}\|_{1,\nu}$ for $i=1,2$, $l=3,4,5,6$
can be computed directly, since they are represented by row and column vectors of length $m$.
The operator norms $\left\|A_{l,i}\right\|_{B(\ellnu)}$ 
can be computed using Lemma~\ref{l:Blnu1norm},
since for $l \neq i$ they are represented by finite matrices, whereas for $l=i$
they have a decaying diagonal tail (see the analogous Lemma~\ref{lem:operator_norm}).

The norms $\|\tilde{z}^{(i)}_2\|_{1,\nu}$ and $\|\tilde{z}^{(i)}_3\|_{1,\nu}$ in the quadratic and cubic terms in $r$ can be estimated using the Banach algebra structure. Taking into account the bound on $\tilde{z}^{(i)}_{2,0}$
in~\eqref{Boundz0}, this leads to
bounds
\[
    \|\tilde{z}_{2}^{(i)}\|_{1,\nu} \leq \mathcal{W}^{(i)}_2 
	\qquad\text{for } i=3,4,5,6,
\]
with
\begin{alignat}{1}
 \mathcal{W}^{(3)}_2 & \bydef \tilde{\Lambda}^{(1)} +2\left( \nu + \frac{1}{\nu}\right)
	\left(
    \|\bx_0^{(1)}\|_{1,\nu} +\|\bx^{(2)}_0\|_{1,\nu} + \|\Delta\bx^{(1)}\|_{1,\nu}
      + \|\Delta\bx^{(2)}\|_{1,\nu} +1 +\overL_0 + |\Delta\overL| 
	  \right), \label{Normz32}\\
 \mathcal{W}^{(4)}_2 & \bydef \tilde{\Lambda}^{(2)} +2\left( \nu + \frac{1}{\nu}\right), \label{Normz42}\\
  \mathcal{W}^{(5)}_2 & \bydef \tilde{\Lambda}^{(3)} +2\left( \nu + \frac{1}{\nu}\right),\label{Normz52} \\
 \mathcal{W}^{(6)}_2 & \bydef \tilde{\Lambda}^{(4)} +2\left( \nu + \frac{1}{\nu} \right)\left( \beta_1 + 1 \right) , \label{Normz62}
\end{alignat}
and
\begin{align}
    \|\tilde{z}_{3}^{(3)}\|_{1,\nu} \leq \mathcal{W}^{(3)}_3 & \bydef 3\left( \nu + \frac{1}{\nu}\right) \label{Normz33}.
\end{align}
The factor $\nu+\nu^{-1}$ in the expressions above is due to the shift in index (to the right and to the left) in $u^{(i)}_{k\pm1}$, $v^{(i)}_{k\pm1}$, etc.

This leaves us with estimating $|A_{l,i}\tilde{z}_1^{(i)} |$ and $\|A_{l,i}\tilde{z}_1^{(i)} \|_{1,\nu}$. Since these appear in the terms that are linear in $r$, a direct triangle inequality bound would be too rough for the method to succeed. Hence we estimate these terms more carefully below.

For the term in front of $r$ in equation \eqref{eq:splittingA12}, for $l=1,2$, we have 
\begin{align*}
    \displaystyle \sum_{i=3}^6 \left| A_{l,i}\tilde{z}_i^{(1)} \right| 
    &\leq \sum_{i=3}^6 \left| (A_{l,i})_0 \right|  \mathcal{W}^{(i)}_1 + \sum_{i=3}^6\left|\sum_{k=1}^{m-1} (A_{l,i})_k \tilde{z}_{1,k}^{(i)} \right|. 
\end{align*}
Here we have corrected for our abuse of notation regarding $\tilde{z}^{(i)}_{1,0}$ by splitting it off using the triangle inequality.

\begin{remark}\label{r:Q}
We use the bound~\eqref{e:dualbound}
to estimate the convolution
\[
  \sup_{\|v\|_{1,\nu} \leq 1} | (a \ast v)_k | =
   \sup_{\|v\|_{1,\nu} \leq 1} \left| \sum_{k' \in \mathbb{Z}} v_{|k'|} a_{|k-k'|}   \right| \leq 
   \max \left\{ |a_k| ,\sup_{k'\geq1} \frac{|a_{|k-k'|}|+|a_{|k+k'|}|}{2 \nu^{k'}} \right\} \bydef \Q_k(a).
\]
A similar estimate leads to
\[
  \sup_{\|v\|_{1,\nu} \leq 1} | (a \ast \hv)_k |  \leq 
  \sup_{k'\geq m} \frac{|a_{|k-k'|}|+|a_{|k+k'|}|}{2 \nu^{k'}}  \bydef \hat{\Q}_k(a).
\]	
\end{remark}

Some of the terms in $\tilde{z}^{(i)}_{1,k}$ are computable directly, while others need to be estimated. To present these estimates in a structured way we
introduce several computable constants.
For the convolution terms involving either $v$ or $\hv$ in $\tilde{z}^{(3)}_{1,k}$ we introduce (for $k\geq 1$)
\begin{alignat*}{1}
  \omega^{(i)}_{k} & \bydef \Q_{k-1}(\bx^{(i)}) + \Q_{k+1}(\bx^{(i)}) , \\
  \hat{\omega}^{(i)}_{k} & \bydef \hat{\Q}_{k-1}(\bx^{(i)}) + \hat{Q}_{k+1}(\bx^{(i)}) , \\
  \Delta \omega^{(i)}_{k} & \bydef \Q_{k-1}(\Delta\bx^{(i)}) + \Q_{k+1}(\Delta\bx^{(i)}) .
\end{alignat*}
Here $\Q_k(\cdot)$ and $\hat{\Q}_k(\cdot)$, defined in Remark~\ref{r:Q}, can be computed (at least finitely many of them) since $\bx^{(i)}$ and $\Delta \bx^{(i)}$ have only finitely many nonzero components.
We now set, for $k=1,\dots,m-1$,
%%
%%
%% bound Z1 case i=1,2
%%
%%
\begin{alignat*}{1}
 z_k^{(3)} &\bydef |\Delta\overL| \Bigl[ \frac{2}{\nu^{k-1}} +\omega^{(1)}_k +\omega^{(2)}_k \Bigr] +\big(\overL_0+|\Delta\overL|\big) \big[\Delta\omega^{(1)}_k +\Delta\omega^{(2)}_k\big] +\overL_0\big[\hat{\omega}^{(1)}_k +\hat{\omega}^{(2)}_k\big], \\
 z_k^{(4)} &\bydef 2 \frac{|\Delta\overL|}{\nu^{k-1}}, \\
 z_k^{(5)} &\bydef 2\frac{|\Delta\overL|}{\nu^{k-1}}, \\
 z_k^{(6)} & \bydef 2\frac{|\overL_0\Delta\beta +\Delta\overL\beta_0| +|\Delta\overL|}{\nu^{k-1}} +2\frac{\left|\Delta\overL\Delta\beta\right|}{\nu^{k-1}} ,
\end{alignat*}
as well as
\begin{alignat*}{1}
 \hat{z}_k^{(3)} &\bydef 
 \Delta \bx^{(2)}_{k\pm1} +(\bx_0^{(1)}\ast\Delta \bx^{(2)})_{k\pm1} +(\Delta \bx^{(1)}\ast \bx_0^{(2)})_{k\pm1},\\ 
 \hat{z}_k^{(4)} &\bydef  \Delta\bx^{(3)}_{k\pm1}, \\
 \hat{z}_k^{(5)} &\bydef  \Delta\bx^{(4)}_{k\pm1}, \\
 \hat{z}_k^{(6)} &\bydef
 \Delta\beta \bx_{0,k\pm1}^{(3)} +\Delta \bx^{(1)}_{k\pm1} +\beta_0 \Delta\bx^{(3)}_{k\pm1} ,
\end{alignat*}
and
\begin{alignat*}{1}
 \hat{\hat{z}}_k^{(3)} &\bydef ( \Delta \bx^{(1)}\ast \Delta \bx^{(2)})_{k\pm1} , \\
  \hat{\hat{z}}_k^{(6)} &\bydef \Delta\beta \Delta\bx^{(3)}_{k\pm1} .
\end{alignat*}
Recall that $|A|$ denotes the component-wise absolute value. 
Then we have the computable estimates ($l=1,2$)
\begin{alignat*}{1}
  \left|\sum_{k=1}^{m-1} (A_{l,3})_k \tilde{z}_{1,k}^{(3)} \right| 
  \leq \mathcal{Z}_{l,3}    & \bydef
 \frac{(|A|_{l,3})_{m-1}\overL_0}{\nu^{m}}  
   +\sum_{k=1}^{m-1}(|A|_{l,3})_{k}z^{(3)}_k 
   + \left|\sum_{k=1}^{m-1} (A_{l,3})_k   \hat{z}_k^{(3)} \right|   
   +\left|\sum_{k=1}^{m-1} (A_{l,3})_k \hat{\hat{z}}_k^{(3)} \right|,\\
  \left|\sum_{k=1}^{m-1} (A_{l,4})_k \tilde{z}_{1,k}^{(4)} \right| 
  	   \leq  \mathcal{Z}_{l,4} &\bydef 
	 \frac{(|A|_{l,4})_{m-1} \overL_0}{\nu^{m}} +
	 \sum_{k=1}^{m-1} (|A|_{l,4})_k z^{(4)}_k +\left|\sum_{k=1}^{m-1} (A_{l,4})_k  \hat{z}_k^{(4)} \right| , \\
  \left|\sum_{k=1}^{m-1} (A_{l,5})_k \tilde{z}_{1,k}^{(5)} \right|
 \leq \mathcal{Z}_{l,5} & \bydef 
		   \frac{(|A|_{l,5})_{m-1}\overL_0}{\nu^{m}} 
		   +\sum_{k=1}^{m-1} (|A|_{l,5})_k z^{(5)}_k +\left|\sum_{k=1}^{m-1} (A_{l,5})_k   \hat{z}_k^{(5)} \right|  , \\
  \left|\sum_{k=1}^{m-1} (A_{l,6})_k \tilde{z}_{1,k}^{(6)} \right|
  \leq \mathcal{Z}_{l,6} &\bydef
 \frac{(|A|_{l,6})_{m-1} \overL_0 (\beta_0+1)}{\nu^{m}} +\sum_{k=1}^{m-1} (|A|_{l,6})_k z^{(6)}_k 
+\left|\sum_{k=1}^{m-1} (A_{l,6})_k \hat{z}_k^{(6)} \right|
    +\left|\sum_{k=1}^{m-1} (A_{l,6})_k   \hat{\hat{z}}_k^{(6)} \right| .
\end{alignat*}
%%
%%
%% bound Z1 case i=3,4,5,6
%%
%%  
For $l=3,4,5,6$, we split 
the estimate in three terms because of the way the $\tilde{z}_{i,0}^{(1)}$ bounds and $A$ are defined. Using \eqref{Boundz0}, we get ($i,l=3,4,5,6$)
\begin{alignat}{1}
    \left\| A_{l,i}\tilde{z}_1^{(i)} \right\|_{1,\nu} & \leq 
	 \mathcal{W}^{(i)}_1 \sum_{j=0}^{m-1}\bigl|(A_{l,i})_{j0} \bigr|    
   +2\sum_{j=1}^{m-1} \left| \sum_{k=1}^{m-1} (A_{l,i})_{j,k}\tilde{z}_{1,k}^{(i)} \right|\nu^j +2 \delta_{l,i} \sum_{j \geq m} \frac{1}{2j}|\tilde{z}_{1,j}^{(i)}|\nu^j.
	\label{e:finalones}
\end{alignat}
Again we have dealt with the $\tilde{z}^{(i)}_{1,0}$ terms separately to take into account our abuse of notation. 

The final two terms in~\eqref{e:finalones} still need to be estimated.
The first of these can be estimated in the same way as above, which we write
(for $3 \leq l,i \leq 6$) compactly as
\[
\sum_{j=1}^{m-1} \left| \sum_{k=1}^{m-1} (A_{l,i})_{j,k}\tilde{z}_{1,k}^{(3)} \right|\nu^j \leq  \mathcal{Z}_{l,i} \bydef \sum_{j=1}^{m-1} 
(\mathcal{Z}_{l,i})_j,
\]
with
\begin{alignat*}{1}
	(\mathcal{Z}_{l,i})_j \bydef &
	 \frac{(|A|_{l,i})_{j,m-1} \overL_0 (\delta_{i,6}\beta_0+1)}{\nu^{m}} +\sum_{k=1}^{m-1} (|A|_{l,i})_{j,k} z^{(i)}_k 
	+\left|\sum_{k=1}^{m-1} (A_{l,i})_{j,k} \hat{z}_k^{(i)} \right|
	    +\left|\sum_{k=1}^{m-1} (A_{l,i})_{j,k}  \hat{\hat{z}}_k^{(i)} \right| ,
\end{alignat*}
where one should read $\hat{\hat{z}}^{(4)}_k=\hat{\hat{z}}^{(5)}_k=0$.

For the final `tail' terms in \eqref{e:finalones}, we bound these as we did for $z^{(i)}_2$ and $z^{(i)}_3$ coefficients. We obtain
%
% bound on the tails
%
%
\begin{alignat*}{1}
  \sum_{j \geq m} \frac{|\tilde{z}_{1,j}^{(3)}|}{j}\nu^j 
  \leq  \mathcal{Z}_3^\infty & \bydef
 \frac{1}{2m}\left( \nu + \frac{1}{\nu} \right) \left(\overL_0+|\Delta\overL|\right)\left(\|\bx^{(1)}_0\|_{1,\nu} +\| \bx^{(2)}_0\|_{1,\nu} +\|\Delta \bx^{(1)}\|_{1,\nu} +\| \Delta \bx^{(2)}\|_{1,\nu} +1 \right)\\
  &\qquad\quad+\sum_{k=m}^{2m-1} \frac{\nu^k}{k} \left(\left|(\bx^{(1)}_0 \ast \bx^{(2)}_0)_{k\pm1}\right| +\left|(\bx^{(1)}_0\ast\Delta\bx^{(2)})_{k\pm1}\right| +\left|(\Delta\bx^{(1)}\ast\bx^{(2)}_0)_{k\pm1}\right|\right) \\
  & \qquad\qquad\quad+\sum_{k=m}^{2m-1} \frac{\nu^k}{k}\left|(\Delta\bx^{(1)}\ast\Delta\bx^{(2)})_{k\pm1}\right| + \frac{\nu^m}{m}\left( |\Delta\bx^{(2)}_{m-1}| +|\bx^{(2)}_{0,m-1}| \right)   \\
  \sum_{j \geq m} \frac{|\tilde{z}_{1,j}^{(4)}|}{j}\nu^j 
  \leq  \mathcal{Z}_4^\infty & \bydef \frac{1}{2m}\left( \nu + \frac{1}{\nu} \right)\left( \overL_0 +|\Delta\overL|\right) +\frac{\nu^m}{m}\left(|\bx^{(3)}_{0,m-1}| +|\Delta\bx^{(3)}_{m-1}|\right) , \\
  \sum_{j \geq m} \frac{|\tilde{z}_{1,j}^{(5)}|}{j}\nu^j 
  \leq \mathcal{Z}_5^\infty & \bydef \frac{1}{2m}\left( \nu + \frac{1}{\nu} \right)\left( \overL_0 +|\Delta\overL|\right) +\frac{\nu^m}{m}\left(|\bx^{(4)}_{0,m-1}| +|\Delta\bx^{(4)}_{m-1}|\right) ,\\
  \sum_{j \geq m} \frac{|\tilde{z}_{1,j}^{(6)}|}{j}\nu^j 
   \leq  \mathcal{Z}_6^\infty & \bydef 
   \frac{1}{2m}\left( \nu + \frac{1}{\nu} \right) \left(\overL_0 +|\Delta\overL |\right)(1 +\beta_1)  \\
  &\qquad\quad +\frac{\nu^m}{m}\left(\beta_1\left(|\bx^{(3)}_{0,m-1}| +|\Delta\bx^{(3)}_{m-1}|\right) +|\bx^{(1)}_{0,m-1}| +|\Delta\bx^{(1)}_{m-1}|\right) .
\end{alignat*}
%
% final Z bounds
%
%
Therefore, recalling~\eqref{e:W1} and \eqref{Normz32}--\eqref{Normz33}, 
for $l=1,2$, we set
\begin{alignat*}{1}
    Z_1^{(l)} &\bydef \sum_{i=1}^2 \frac{\left|A_{l,i}\right|}{\nu^m} +
	\sum_{i=3}^6 \left| (A_{l,i})_0 \right|  \mathcal{W}^{(i)}_1 
	+\sum_{i=3}^6 \mathcal{Z}_{l,i}, \\
    Z_2^{(l)} & \bydef \sum_{i=3}^6 \|A_{l,i}\|_{\infty,\nu^{-1}} \mathcal{W}^{(i)}_2, \\
    Z_3^{(l)}& \bydef \|A_{l,3} \|_{\infty,\nu^{-1}} \mathcal{W}^{(3)}_3,
\end{alignat*}
and for $l=3,4,5,6$, we set
\begin{alignat*}{1}
    Z_1^{(l)} & \bydef \sum_{i=1}^2 \frac{\left\|A_{l,i}\right\|_{1,\nu}}{\nu^m} +
\sum_{i=3}^6  \mathcal{W}^{(i)}_1 \sum_{j=1}^{m-1}\left|(A_{l,i})_{j,0}\right| 
+\sum_{i=3}^6\mathcal{Z}_{l,i} +\mathcal{Z}_l^\infty, \\
    Z_2^{(l)}& \bydef\sum_{i=3}^6 \|A_{l,i}\|_{B(\ellnu)}
	\mathcal{W}^{(i)}_2 , \\
    Z_3^{(l)}& \bydef \left\|A_{l,3}\right\|_{B(\ellnu)} 
	\mathcal{W}^{(3)}_3.
\end{alignat*}
Finally, by construction,
\begin{equation*}
  \displaystyle \sup_{b,c \in B(r)} \left\| \left(DT(\beta_s,\bx_s +b)c\right)^{(l)} \right\|_{X^{(l)}} \leq\left(Z_0^{(l)}+Z_1^{(l)} \right)r + Z_2^{(l)}r^2 +Z_3^{(l)}r^3,
  \qquad\text{for all } s \in [0,1] \text{ and } l=1,\hdots,6.
\end{equation*}

%%%%%%%%%%%%%%%%%%%%%%%%%%%%%%%%%%%%%%%%%%%%%%%%%%%%%%%%%%%%%%%%%%
%%%%%%%%%%%%%%%%%%%%%% CONCLUDING SUBSECTION %%%%%%%%%%%%%%%%%%%%%
%%%%%%%%%%%%%%%%%%%%%%%%%%%%%%%%%%%%%%%%%%%%%%%%%%%%%%%%%%%%%%%%%%

\subsection{Use of the uniform contraction principle}

Using the computable bounds $Y^{(l)}$ and $Z^{(l)}$ constructed in the previous two sections, we set
\begin{equation} \label{eq:radii_polynomials_bvp}
p^{(l)}(r) \bydef Y^{(l)} + \left(Z_0^{(l)}+Z_1^{(l)} -1 \right)r + Z_2^{(l)} r^2 +Z_3^{(l)} r^3, \qquad l=1,\dots,6.
\end{equation}
If we find an $r>0$ such that $p^{(l)}(r)<0$ for all $l=1,\ldots,6$, then according to Proposition~\ref{prop:Radii} we have validated the numerical approximation $\overx_s$ of solutions to the BVP~\eqref{e:reducedproblem}, 
for every $s \in [0,1]$, and hence we have proven the existence of symmetric homoclinic orbits for all $\beta \in [\beta_0,\beta_1]$.

\begin{proposition}
For every $s\in [0,1]$, let 
\begin{equation*}
\overline{v}_s^{(i)}(t) = \bx_{s,0}^{(i)} +2\displaystyle \sum_{k=0}^{m-1} \bx_{s,k}^{(i)} T_k(t),
\qquad\text{for } i=1,2,3,4,
\end{equation*} 
be the approximate solution of~\eqref{e:reducedproblem} that we have computed for $\beta=\beta_s$, $L=\overL_s$ and $\psi=\overp_s$.
Assume that there exists an $r>0$ such that $p^{(l)}(r)<0$ for all $l=1,\ldots,6$. Then, for each $s\in[0,1]$, there exists a 
solution of~\eqref{e:reducedproblem} 
for $\beta=\beta_s$  of the form
\begin{equation*}
v_s^{(i)}(t) = x_{s,0}^{(i)} +2\displaystyle \sum_{k=0}^{\infty} x_{s,k}^{(i)} T_k(t), \qquad\text{for } i=1,2,3,4,
\end{equation*}
and some $L=L_s$ and $\psi=\psi_s$ satisfying $|L_s-\overL_s|\leq r$ and $|\psi_s-\overp_s|\leq r$.
This solution corresponds to a (symmetric) homoclinic orbit of~\eqref{eq:brigdeODE}.
Furthermore, let
\begin{equation*}
	g^{(i)}_{s}(t) = v_s^{(i)}(t) - \overline{v}_s^{(i)}(t)
	\qquad\text{for } i=1,2,3,4,
\end{equation*}
then we have the following uniform error bound on the (central part of) the homoclinic orbit in phase space:
$ |g^{(i)}_{s}(t) | \leq r $ for all $t\in [-1,1]$, $s\in[0,1]$ and $i=1,2,3,4$.
\end{proposition}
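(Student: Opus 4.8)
The plan is to assemble the pieces built in the previous subsections by invoking Proposition~\ref{prop:Radii}. By construction of the $Y$ and $Z$ bounds above, the Newton-like operator $T(\beta,\cdot)=I-AF(\beta,\cdot)$ from~\eqref{eq:newton-like_homoclinic} satisfies~\eqref{eq:Y_Bounds} and~\eqref{eq:Z_Bounds} with $Y=(Y^{(l)})_{l=1}^{6}$ and $Z^{(l)}(r)=(Z_0^{(l)}+Z_1^{(l)})r+Z_2^{(l)}r^2+Z_3^{(l)}r^3$, so the radii polynomials are precisely those in~\eqref{eq:radii_polynomials_bvp}. Thus, if $r>0$ satisfies $p^{(l)}(r)<0$ for all $l=1,\dots,6$, then $r\in\mathcal I$ and Proposition~\ref{prop:Radii} (which rests on Theorem~\ref{contr} and the uniform contraction principle, Theorem~\ref{th_local}) provides, for each $s\in[0,1]$, a unique $x_s\in B_r(\bx_s)$ with $T(\beta_s,x_s)=x_s$. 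Writing $x_s=(L_s,\psi_s,x_s^{(1)},x_s^{(2)},x_s^{(3)},x_s^{(4)})$, the definition of $\|\cdot\|_X$ immediately yields $|L_s-\overL_s|\leq r$, $|\psi_s-\overp_s|\leq r$ and $\|x_s^{(i)}-\bx_s^{(i)}\|_{1,\nu}\leq r$ for $i=1,2,3,4$.

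Next I would verify that the fixed point identity $T(\beta_s,x_s)=x_s$, i.e.\ $AF(\beta_s,x_s)=0$, is equivalent to $F(\beta_s,x_s)=0$; for this it suffices that $A$ be injective. The finite block $A^{[m]}$ is invertible because, by~\eqref{e:Z0} and Lemma~\ref{l:Blnu1norm}, $\|I_{4m+2}-A^{[m]}D_x\overline F(\beta_0,\bx_0)\|\leq\max_{1\leq l\leq6}Z_0^{(l)}<1$, the last inequality being forced by $p^{(l)}(r)<0$; and the infinite diagonal tail of $A$, with nonzero entries $1/(2k)$ for $k\geq m$ (see~\eqref{e:Adelta}), is injective as well. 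This is the exact Chebyshev counterpart of the argument used in the proof of Proposition~\ref{prop:validation_manifold}. Granting $F(\beta_s,x_s)=0$, unpacking the components in~\eqref{eq:definition_F} shows: $\eta^{(1)}=\eta^{(2)}=0$ gives $v_s^{(2)}(-1)=v_s^{(4)}(-1)=0$; the equations $f_0^{(i)}=0$ give $v_s(1)=P_{\beta_s}(\psi_s)$, well defined for every real $\psi_s$ since the evaluation of $P_{\beta_s}$ along $\partial D_{2,\rho}(\mathbb R^2)$ converges whenever $\rho<\tilde\nu$; and $f_k^{(i)}=0$ for $k\geq1$ is, by the standard equivalence between the Chebyshev recurrence and the integrated differential equation (see~\cite{LR}), equivalent to $\dot v_s=L_s\Psi_{\beta_s}(v_s)$ on $[-1,1]$. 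Since $\nu>1$ and $x_s^{(i)}\in\ell^1_\nu$, each series $v_s^{(i)}(t)=x_{s,0}^{(i)}+2\sum_{k\geq1}x_{s,k}^{(i)}T_k(t)$ converges absolutely to a real-analytic function on $[-1,1]$, so $(L_s,\psi_s,v_s)$ is a genuine solution of~\eqref{e:reducedproblem}.

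It then remains to pass back to~\eqref{eq:brigdeODE}. Because $v_s(1)=P_{\beta_s}(\psi_s)$ lies on the local stable manifold of $0$ for~\eqref{quadratic_system}, the orbit of $v_s$ extends for all $t\geq1$ and converges to $0$; undoing the change of variables via $u=\ln(1+v_s^{(1)})$ — legitimate because $v_s^{(1)}>-1$, which one checks on $[-1,1]$ from the error bound below together with an interval-arithmetic lower bound on $\overline v_s^{(1)}$, and which holds automatically on the stable-manifold tail where $v^{(1)}$ is small — produces a decaying solution $u$ of $u''''+\beta_s u''+e^u-1=0$ on $[-L_s,\infty)$ with $u'(-L_s)=u'''(-L_s)=0$. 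The reflection lemma at the start of Section~\ref{orbit} extends $u$ to a symmetric homoclinic orbit on $\mathbb R$, which is exactly a solution of~\eqref{eq:brigdeODE}. Finally, for the uniform error bound, using $\|T_k\|_\infty=1$ and $\nu\geq1$, for every $i=1,2,3,4$, $s\in[0,1]$ and $t\in[-1,1]$,
\[
 |g_s^{(i)}(t)|\leq|x_{s,0}^{(i)}-\bx_{s,0}^{(i)}|+2\sum_{k\geq1}|x_{s,k}^{(i)}-\bx_{s,k}^{(i)}|\nu^{k}=\|x_s^{(i)}-\bx_s^{(i)}\|_{1,\nu}\leq r,
\]
exactly as in the manifold estimate of Proposition~\ref{prop:validation_manifold}.

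The step I expect to be the main obstacle is precisely this last translation: making fully rigorous that a zero of $F$ encodes a homoclinic orbit of the original fourth-order equation — that the Chebyshev recurrences (boundary rows included) are equivalent to the ODE, that the parameterized endpoint genuinely lies on the stable manifold so that the orbit limits to $0$, and that $v^{(1)}=e^{u}-1$ stays bounded away from $-1$ along the entire orbit so that the variable change inverts globally. The analytic estimates themselves are routine once the bounds of the previous subsections are in hand.
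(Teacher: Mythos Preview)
Your proposal is correct and follows essentially the same approach as the paper: invoke Proposition~\ref{prop:Radii} to obtain the fixed point, use the $Z_0$ bound to show $A^{[m]}$ (hence $A$) is injective so that $F(\beta_s,x_s)=0$, and then derive the uniform error bound via $\|T_k\|_\infty=1$ and the $\ell^1_\nu$ estimate. You expand considerably on points the paper dispatches with ``by construction'' and ``through the change of variables from Section~\ref{s:rewriting}'' --- in particular your unpacking of why $F=0$ encodes~\eqref{e:reducedproblem} and your discussion of $v^{(1)}>-1$ for inverting the change of variables --- but these are elaborations of the same argument rather than a different route.
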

\begin{proof}
Proposition~\ref{prop:Radii} yields that, for each $s\in[0,1]$, there exists a unique fixed point $x_s$ of $T(\beta_s,\cdot)$ in the ball of radius $r$ around $\bx_s$. 
The operator $A$ is injective since its non-diagonal part $A^{[m]}$ is invertible. The latter follows from the fact that, see~\eqref{e:Z0},
\[
  \bigl\| I_{4m+2}-A^{[m]}D_x\overline{F}(\beta_0,\bx_0) \bigr\|_{B(X^{[m]})} \leq \max_{1\leq l\leq 6} Z_0^{(l)} < 1 ,
\] 
where the final inequality is implied by $p^{(l)}(r)<0$. Here the operator norm on $X^{[m]} \cong \R^{4m+2}$ is induced by the one on $X$.
Hence the fixed point $x_s$ of $T$ solves $F(\beta_s,x_s)=0$,
and by construction $v_s$ is a solution of~\eqref{e:reducedproblem}, which through the change of variables from 
Section~\ref{s:rewriting} corresponds to a homoclinic solution of~\eqref{eq:brigdeODE}. The error bound follows from
\begin{align*}
\|v_s^{(i)}(t)-\overline{v}_s^{(i)}(t)\|_\infty &= \Bigl\| x_{s,0}^{(i)}-\bx_{s,0}^{(i)} + 2\sum_{k\geq 1}\bigl( x^{(i)}_{s,k}-\bx^{(i)}_{s,k}\bigr) T_k(t) \Bigr\|_\infty \\
&\leq \bigl|x^{(i)}_{s,0}-\bx^{(i)}_{s,0}\bigr| +2\sum_{k\geq 1} \bigl|x^{(i)}_{s,k}-\bx^{(i)}_{s,k}\bigr|\\
&\leq \bigl|x^{(i)}_{s,0}-\bx^{(i)}_{s,0}\bigr| +2\sum_{k\geq 1} \bigl|x^{(i)}_{s,k}-\bx^{(i)}_{s,k}\bigr|\nu^k \\
&= \bigl\|x_{s}^{(i)}-\bx_{s}^{(i)}\bigr\|_{1,\nu} 
\leq \| x_s - \bx_s\|_X \leq r .\qedhere
\end{align*}
\end{proof}

%%%%%%%%%%%%%%%%%%%%%%%%%%%%%%%%%%%%%%%%%%%%%%
%%%%%%%%%%%%%%%%%%%% ALGORITHM %%%%%%%%%%%%%%%%%%%
%%%%%%%%%%%%%%%%%%%%%%%%%%%%%%%%%%%%%%%%%%%%%%

\section{Algorithm and results}
\label{s:algorithm}

In this section we discuss some algorithmic issues. In particular, we explain how certain computational constants are chosen and how the two parts of the problem (the manifold computation and the boundary value problem) are joined together to produce the homoclinic orbit.
To get the continuation started, the first thing to do is to compute the approximation of the manifold for a fixed value of $\beta$. Since the first coefficients of the parameterization depend on the steady state and the eigenvectors, which are known, one can start Newton's method with these values in combination with zeros for all higher order coefficients.
If Newton's method does not converge, replacing the starting point with a good approximation for a slightly higher number of Taylor coefficients (which can be computed recursively) will work. 
Once one good approximation has been found for a particular value of the parameter $\beta$, one can use it as the starting point to find another approximation for sufficiently close values of the parameter. 

Another important point for the computations is the size of the manifold that we get. Since the stable eigenvalues of the Jacobian at 0 are complex conjugates, we know that asymptotically the orbit spirals toward the origin. If the manifold we compute is large enough to contain most of the spiraling part, then we do not have to compute that part of the orbit using Chebyshev series, which is advantageous. Generally speaking, the larger the manifold is, the easier the remaining part with Chebyshev will be. Therefore we use the method developed in \cite{maximizing_manifold} to maximize the image of the parameterization we compute.

A natural approach to obtain a larger manifold is to try and maximize the $\tilde\nu$ for which we can validate the parameterization (we recall that its domain of definition is $D_{2,\tilde \nu}(\mathbb{R}^2)= \left\lbrace \theta\in\R^2,\ \left\vert\theta\right\vert_2 \leq \tilde \nu\right\rbrace$). However, taking $\tilde\nu\gg 1$ or $\tilde\nu \ll 1$ leads to numerical instabilities (see for instance the quantities $K^{(i,j)}$ defined in Section~\ref{sec:Z0}). The key observation from~\cite{maximizing_manifold} to avoid this phenomenon is the following. Given a parameterization
\begin{equation*}
P(\theta)=\sum_{\vert\alpha\vert\geq 0}a_{\alpha}\theta^{\alpha},
\end{equation*}
and, for some $\gamma>0$, a \emph{rescaled} parameterization (also with rescaled eigenvectors)
\begin{equation*}
\tilde P(\theta)=\sum_{\vert\alpha\vert\geq 0}\tilde a_{\alpha}\theta^{\alpha}, \quad \text{with}\quad \tilde a_{\alpha}=\gamma^{\vert\alpha\vert}a_{\alpha},
\end{equation*}
the parameterization $P$ on the domain $D_{2,\gamma}(\mathbb{R}^2)$ defines the same manifold as the rescaled parameterization $\tilde P$ on the domain $D_{2,1}(\mathbb{R}^2)$. Therefore we can fix $\tilde \nu$ to be $1$ and instead look for the largest $\gamma$ for which we can validate the rescaled parameterization.

Another useful feature of the results of \cite{maximizing_manifold} is that they provide the explicit dependence of the bounds $Y$ and $Z$ with respect to the rescaling $\gamma$, enabling us to recompute bounds for any rescaling cheaply. In practice, we use the following process:
\begin{itemize}
\item Compute an approximate parameterization $P$ (that is, the coefficient $a_{\alpha}$).
\item Compute the bounds $Y$ and $Z$ for $\beta_0$, without the continuation (i.e.\ take $\Delta a=0$ and $\Delta\beta=0$ in every estimate).
\item Find the largest $\gamma$ for which the proof succeeds (i.e.\ we find an $r>0$ such that $p^{(i)}(r)<0$ for all $i=1,2,3,4$, where the four radii polynomials $p^{(i)}$ are defined in \eqref{eq:radii_polynomials_manifold}) for the rescaled coefficients $\tilde a_{\alpha}=\gamma^{\vert\alpha\vert}a_{\alpha}$, while requiring the coefficients of the linear term (the one front of $r$) in each radii polynomial $p^{(i)}$ to be less than some threshold $\eta\in(0,1)$, which will be discussed below. This step yields a parameterization $\tilde P$ with rigorous error bounds on the domain $D_{2,1}(\mathbb{R}^2)$.
\item Use the parameterization $\tilde P$ with this $\gamma$ for the Chebyshev part and for the continuation.
\end{itemize}
Before describing in more detail the process of continuation, let us explain the role of the threshold $\eta$. Finding a positive root of a radii polynomial is impossible if its linear term is not negative, because all its other coefficients are always non-negative by construction. If the linear term is just negative enough for the proof to work at the single parameter value $\beta_0$, then $\Delta\beta$ has to be taken extremely small for it to remain negative for the uniform proof, since all bounds become worse monotonically in $|\Delta\beta|$. However, we want to take $\Delta\beta$ as large as possible to reduce the number of steps we have to perform to prove the existence of a symmetric homoclinic orbit for all $\beta\in [0.5,1.9]$. Hence, the addition of this threshold $\eta$ is a trade off: we get a manifold that is a bit smaller than what we could have had optimally, which makes the proof for the Chebyshev part a bit harder, but we can take larger steps in $\beta$, making the total process faster overall. In practice, we use an $\eta$ close to $0.5$ (the value we use varies slightly with $\beta$).

Once the approximation for the manifold is maximized and proven for a particular value of $\beta_0$, one can use it as the starting point to find the approximation for $\beta_1>\beta_0$ in order to compute an approximation for the whole interval $[\beta_0,\beta_1]$. We use the same rescaling $\gamma$ for the entire interval $[\beta_0,\beta_1]$. On the other hand, it is possible to use different scalings for consecutive intervals. 

The value of $\Delta\beta= \beta_1 - \beta_0$ that we use is not constant, and varies between $2.5\times 10^{-4}$ and $3.9\times 10^{-6}$. The smaller values are needed when $\beta_0\geq 1.8$. This is due to the fact that proof of the stable manifold becomes harder and harder when $\beta$ approaches $2$. Indeed, when $\beta$ goes to $2$ the real part of the stable eigenvalues (see~\eqref{eq:lambda}) goes to zero, and the problem of finding the stable manifold becomes singular (this can also be seen in the bounds derived in Section~\ref{Manifold}). 
Note that when the proof fails for a given interval, a smaller $\Delta\beta$ needs to be used. Thus, the algorithm needs to recompute both the manifold and the orbit for $\beta=\beta_1$. However, $A^\dagger$ and $A$ need not to be computed again for the new proofs since they both only depend on the approximation at $\beta=\beta_0$. 

For the manifold all proofs were done using $N=30$ for the dimension of the truncated power series. For the orbit, the proof succeeds with $m=350$ for $[\beta_0,\beta_1]\subset [0.5,1.8]$, and with $m=400$ otherwise. 
In Figure~\ref{figure:DecayAndProfile} one can see the profile of the solution for $\beta=0.5$, $\beta=1.2$ and $\beta=1.9$. The left part of the figure shows the decay rate of the solution using the logarithm of the absolute value of the first $50$ Chebyshev coefficients. Recall that the first component of the system is given by $v_1=e^{u_1}-1$, where $u_1$ is the first component of the original system, obtained after transforming the fourth order equation to a first order system. One can see that the solution for $\beta=0.5$ is really close to 
$-1$ for a much longer period of time than the other solutions depicted. This behaviour has an impact on the decay of the corresponding Chebyshev series. Moreover, another value affecting the decay rate of the solution is the time rescaling factor $L$ of the orbit. For $\beta=0.5$ (respectively $\beta=1.2$ and $\beta=1.9$) we have $L\approx 3.1312$ (respectively $L\approx 1.7671$ and $L\approx 2.6170$). The first three components of the solution and the local manifold can be seen in Figure~\ref{figure:SolutionManifold05}, Figure~\ref{figure:SolutionManifold12} and Figure~\ref{figure:SolutionManifold19} for $\beta=0.5$, $\beta=1.2$ and $\beta=1.9$, respectively.
The profiles of the first component $v^{(1)}$ of these three solutions can be compared in Figure~\ref{figure:SolProfile}, where half the symmetric homoclinic orbits is depicted. Furthermore, the three corresponding homoclinic solutions of the suspension bridge equation~\eqref{eq:ode} in the original $u$-variable are presented in Figure~\ref{figure:SolProfile_u}.

\begin{figure}
\begin{center}
\includegraphics[width=0.8\textwidth]{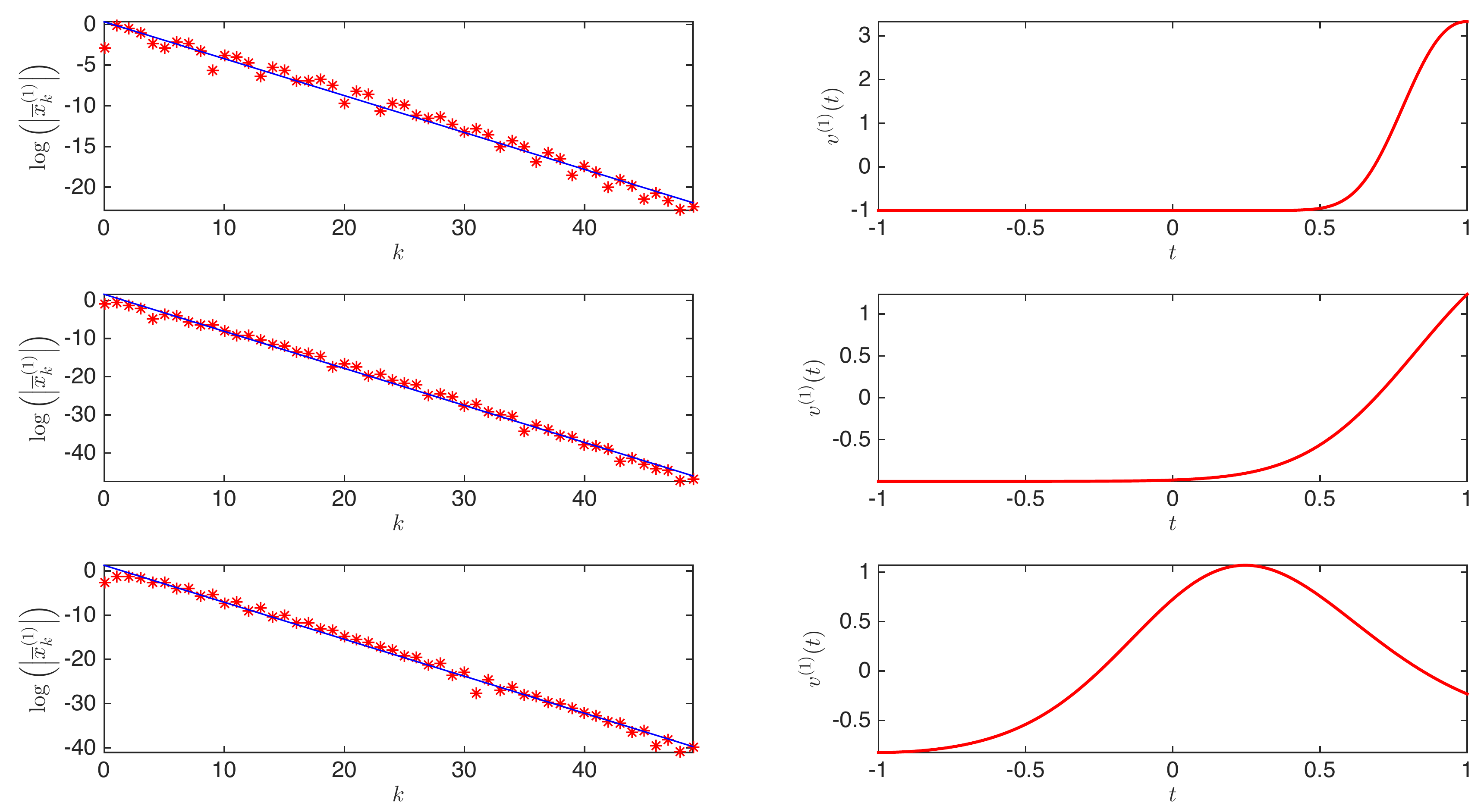}
\end{center}
\vspace{-.7cm}
\caption{The logarithm of the absolute value of the $50$ first coefficients of the first component on the left, and the profile of the first component of the solution on the right. At the top $\beta=0.5$, in the middle $\beta=1.2$ and at the bottom $\beta=1.9$}\label{figure:DecayAndProfile}
\end{figure}

Finally, to perform the proof successfully for the entire interval range $\beta \in [0.5,1.9]$ we had to execute the algorithm $7960$ times. Each proof took between $7$ and $10$ seconds on a laptop with an Intel Core i7 4500U processor on MATLAB R2016a. The code which was used to perform the proofs is available at \cite{webpage} and uses the interval arithmetic package INTLAB \cite{Ru99a}. 

\begin{figure}
\begin{center}
\includegraphics[width=0.8\textwidth]{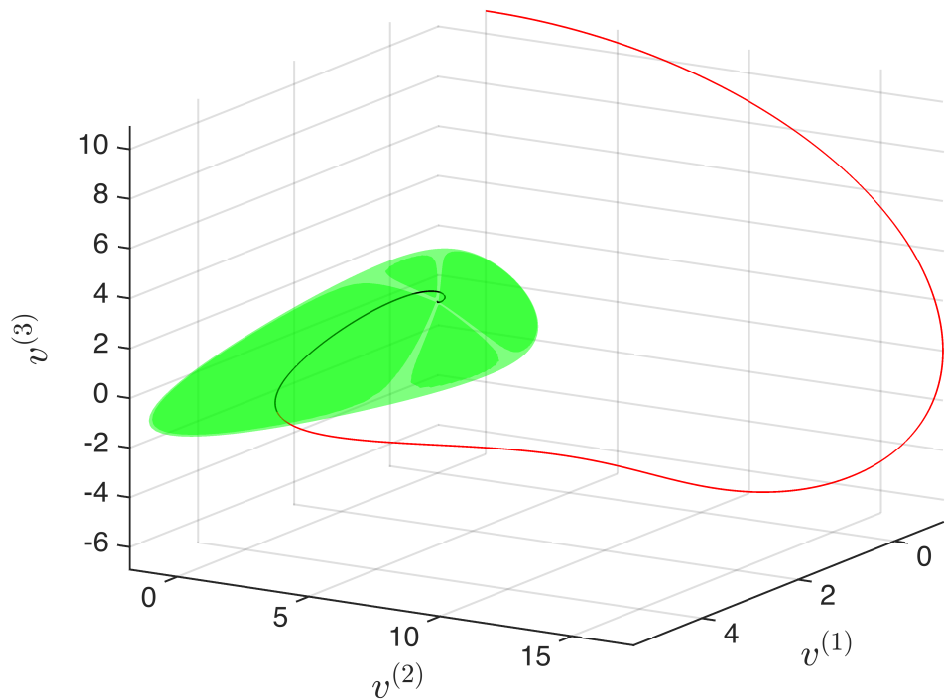}
\end{center}
\vspace{-.5cm}
\caption{First three components of the solution (red) and the manifold (green) in the case $\beta=0.5$. 
The segment in black corresponds to the forward orbit of the solution on the local manifold, where the dynamics is obtained via the 
conjugacy relation satisfied by the parameterization (e.g.\ see \cite{MR2177465}).
}\label{figure:SolutionManifold05}
\end{figure}

\begin{figure}
\begin{center}
\includegraphics[width=0.8\textwidth]{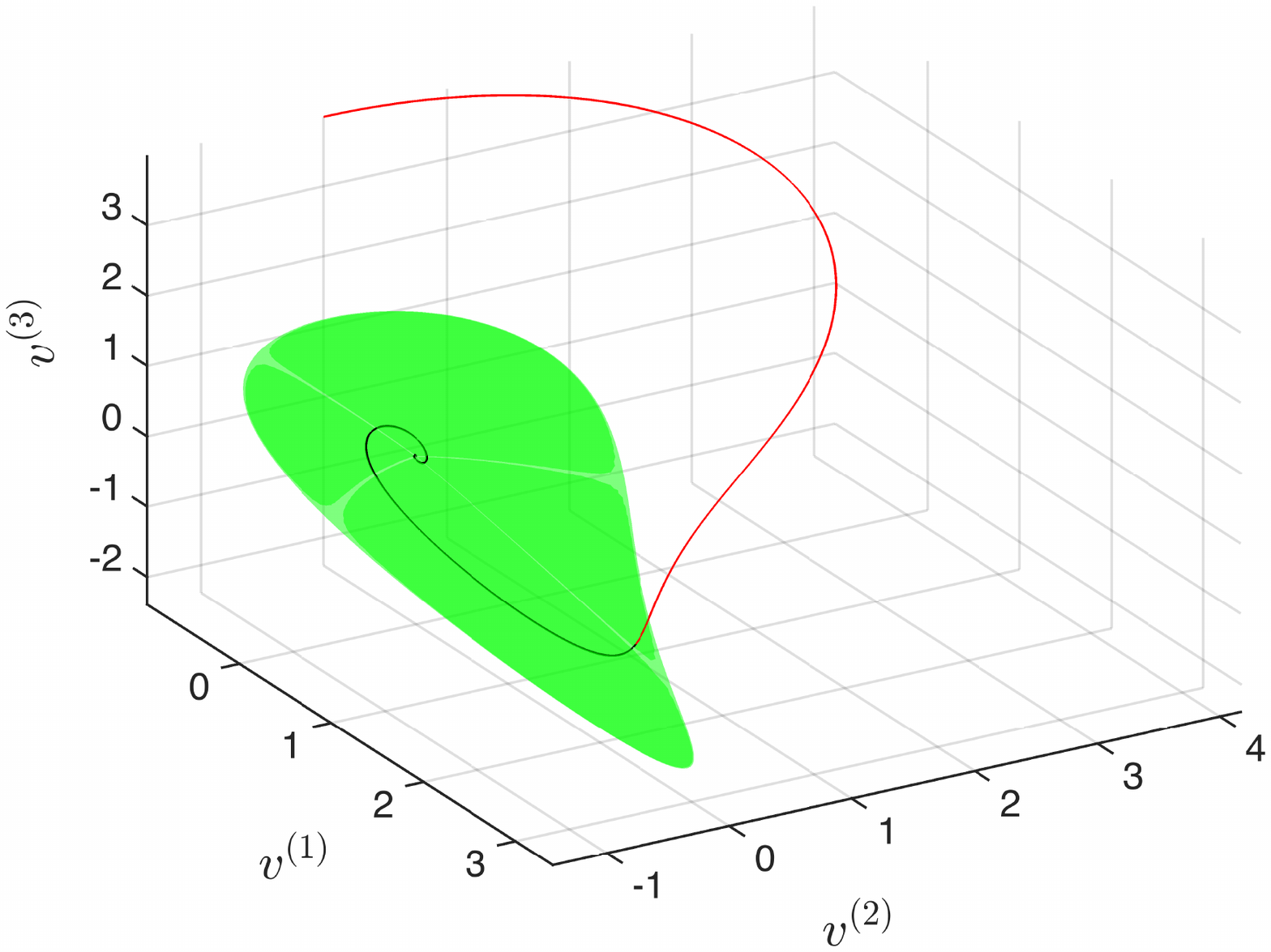}
\end{center}
\vspace{-.5cm}
\caption{First three components of the solution (red) and the manifold (green) in the case $\beta=1.2$. 
The segment in black corresponds to the forward orbit of the solution on the local manifold, where the dynamics is obtained via the 
conjugacy relation satisfied by the parameterization.
}\label{figure:SolutionManifold12}
\end{figure}

\begin{figure}
\begin{center}
\includegraphics[width=0.8\textwidth]{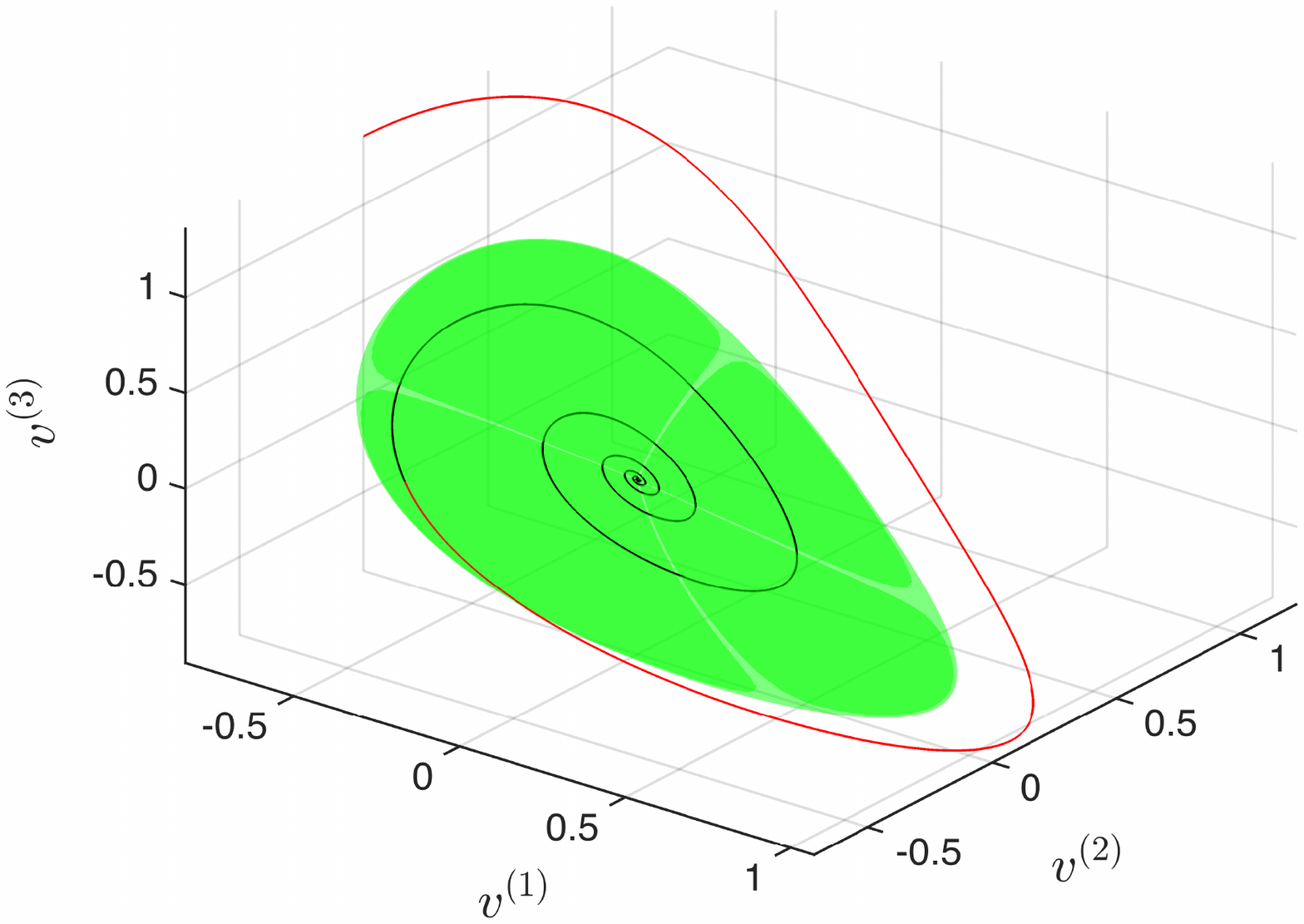}
\end{center}
\vspace{-.5cm}
\caption{First three components of the solution (red) and the manifold (green) in the case $\beta=1.9$. 
The segment in black corresponds to the forward orbit of the solution on the local manifold, where the dynamics is obtained via the 
conjugacy relation satisfied by the parameterization.
}\label{figure:SolutionManifold19}
\end{figure}

\begin{figure}
\begin{center} 
\includegraphics[width=0.8\textwidth]{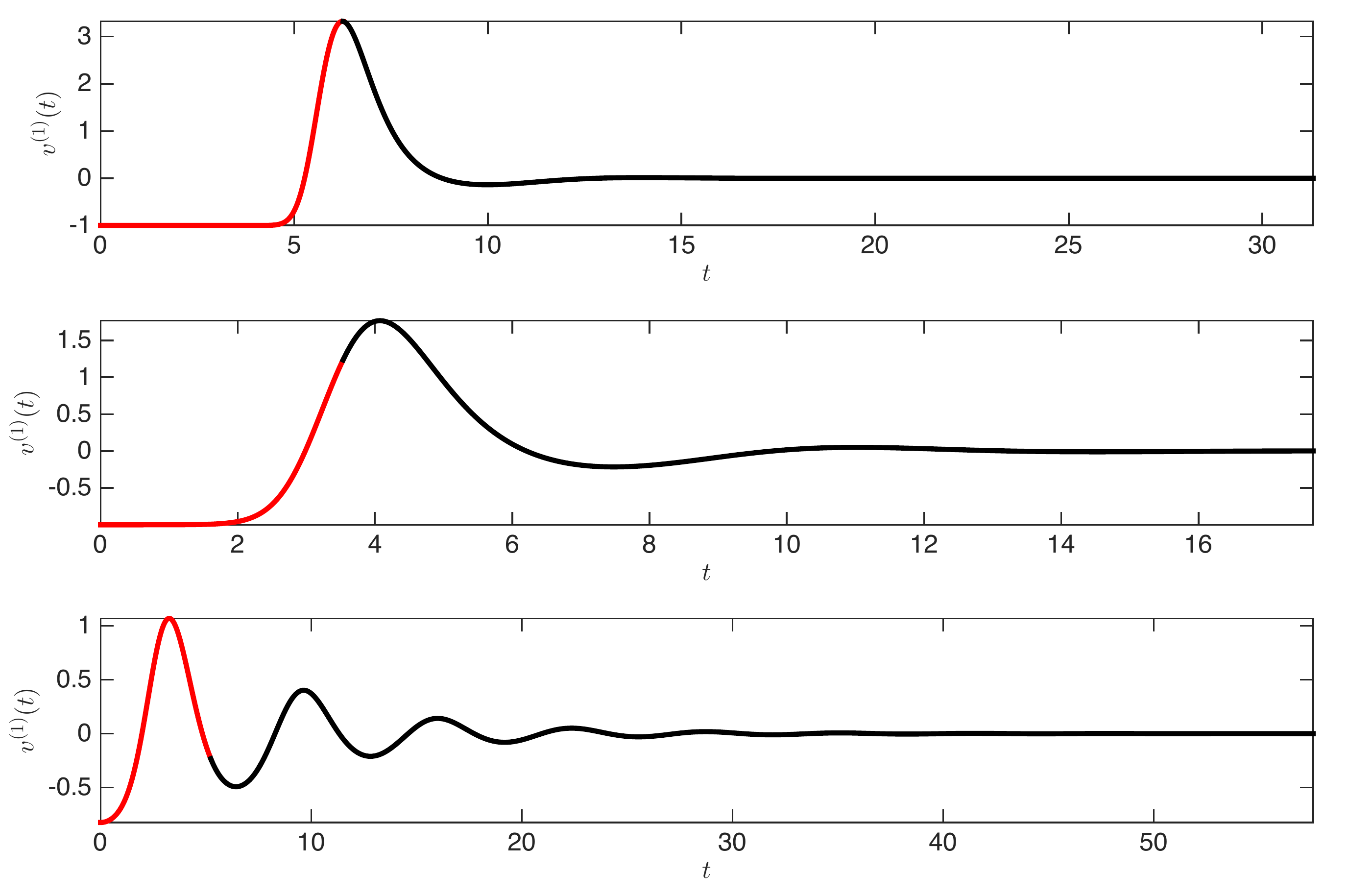}
\end{center}
\vspace{-.7cm}
\caption{The solution profiles of $v^{(1)}(t)$ for $\beta=0.5$ (top), $\beta=1.2$ (middle) and $\beta=1.9$ (bottom). The parts in red correspond to the part of the solution which was obtained using 
Chebyshev series, while the parts in black correspond to the part of the solution lying in the local stable manifold computed using Taylor series.}\label{figure:SolProfile}
\end{figure}

\begin{figure}
\begin{center}
\includegraphics[width=0.8\textwidth]{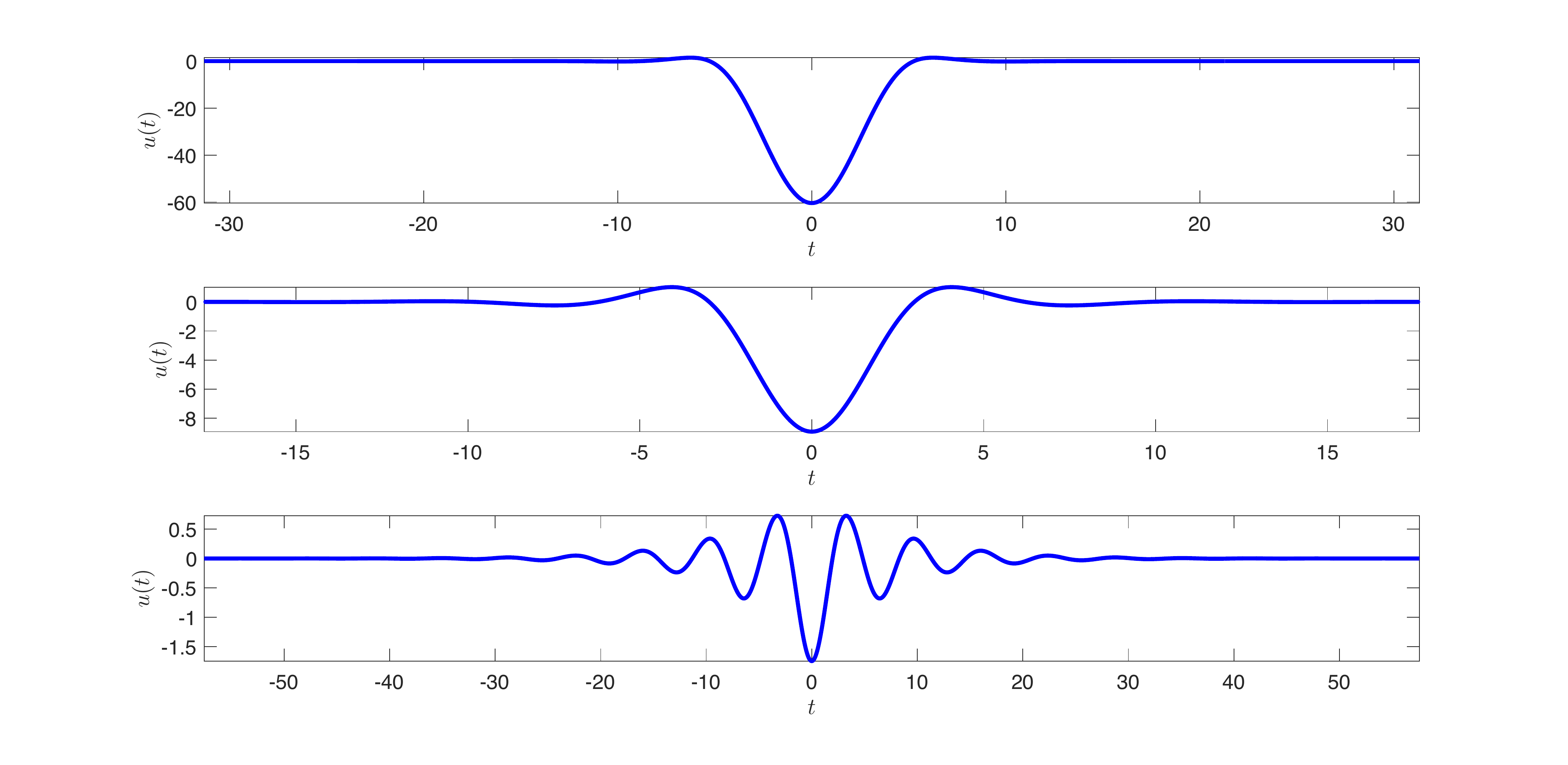}
\end{center}
\vspace{-.7cm}
\caption{The solution profiles in the variable $u$ of the suspension bridge equation~\eqref{eq:ode} for $\beta=0.5$ (top), $\beta=1.2$ (middle) and $\beta=1.9$ (bottom). Notice the different scales of the $y$-axis for the three solutions.}\label{figure:SolProfile_u}
\end{figure}

%%%%%%%%%%%%%%%%%%%%%%%%%%%%%%%%%%%%%%%%%%%%%%
%%%%%%%%%%%%%%%%%%%% Acknowledgements %%%%%%%%%%%%%%%
%%%%%%%%%%%%%%%%%%%%%%%%%%%%%%%%%%%%%%%%%%%%%%

\section*{Acknowledgements}

Jan Bouwe van den Berg was supported by the grant NWO Vici-grant 016.123.606.
Jean-Philippe Lessard was supported by Gouvernement du Canada/Natural
Sciences and Engineering Research Council of Canada (NSERC), CG100747. 

%%%%%%%%%%%%%%%%%%%%%%%%%%%%%%%%%%%%%%%%%%%%
%%%%%%%%%%%%%%%%%%%% BIBLIOGRAPHY %%%%%%%%%%%%%%%
%%%%%%%%%%%%%%%%%%%%%%%%%%%%%%%%%%%%%%%%%%%%

\end{document}